\newtheorem{thm}{Theorem}[section]
\newtheorem{lem}[thm]{Lemma}
\newtheorem{cor}[thm]{Corollary}
\newtheorem{pro}[thm]{Proposition}
\newtheorem{ex}[thm]{Example}
\newtheorem{rmk}[thm]{Remark}
\newtheorem{defi}[thm]{Definition}
\newcommand {\emptycomment}[1]{}
\newcommand{\dr}{\delta^{\rm reg}}
\newcommand{\be }{\begin{equation}}
\newcommand{\ee }{\end{equation}}
\newcommand{\g}{\mathfrak g}
\newcommand{\h}{\mathfrak h}
\newcommand{\Real}{\mathbb R}
\newcommand{\huaC}{{\mathcal{C}}}
\newcommand{\huaO}{{\mathcal{O}}}
\newcommand{\frkd}{\mathfrak d}
\newcommand{\frkg}{\mathfrak g}
\newcommand{\frks}{\mathfrak s}
\newcommand{\frkB}{\mathfrak B}
\newcommand{\frkL}{\mathfrak L}
\newcommand{\frkR}{\mathfrak R}
\newcommand{\half}{\frac{1}{2}}
\newcommand{\Courant}[1]{\left\llbracket  #1\right\rrbracket }
\newcommand{\Id}{\rm{id}}
\newcommand{\br}[1]{   [ \cdot,    \cdot  ]   }
\newcommand{\id}{\rm{id}}
\newcommand{\dM}{\mathrm{d}}
\newcommand{\Hom}{\mathrm{Hom}}
\newcommand{\Sym}{\mathrm{Sym}}
\newcommand{\gl}{\mathfrak {gl}}
\newcommand{\AD}{\mathfrak{ad}}
\newcommand{\End}{\mathrm{End}}
\newcommand{\ad}{\mathrm{ad}}
\newcommand{\K}{\mathbb{K}}
\begin{document}

\title[From pre-Lie bialgebras to phase spaces of Lie algebras]{From pre-Lie bialgebras to phase spaces of Lie algebras: a categorical correspondence}

\author{Qi Wang}
\address{School of Mathematics and Statistics, Changchun University of Technology, Changchun 130012, Jilin, China}
\email{wangqi2018@ccut.edu.cn}

\author{Xueyi Wang}
\address{School of Mathematics and Statistics, Northeast Normal University, Changchun 130024, China}
\email{wangxueyi@nenu.edu.cn}
\vspace{-5mm}

\author{Jiefeng Liu}
\address{School of Mathematics and Statistics, Northeast Normal University, Changchun 130024, China}
\email{liujf534@nenu.edu.cn}
\vspace{-5mm}


\begin{abstract}
This paper establishes a categorical framework for phase spaces of Lie algebras, pre-Lie
bialgebras, Manin triples, classical $\mathfrak s$-matrices, and relative Rota-Baxter operators by introducing the concept of coherent homomorphisms. Starting with endo pre-Lie algebras (pre-Lie algebras equipped with endomorphisms), we extend classical constructions to this enhanced setting, which leads to the notion of coherent endomorphisms for each class of structures.  Through polarization, these endomorphisms naturally generalize to coherent homomorphisms, establishing well-defined categories of these algebraic objects. Furthermore, mappings between categories are elevated to functors or equivalences, formalizing interconnections among the constructions. Finally, exploiting the categorical correspondence between s-matrices and relative Rota-Baxter operators, we develop cohomology and deformation of $\mathfrak s$-matrices, thereby bridging algebraic structures with category-theoretic methods.
\end{abstract}


\keywords{phase space, pre-Lie bialgebra, Manin triple, relative Rota-Baxter operator, homomorphism, $\frks$-matrix}
\footnotetext{{\it{MSC}}: 16T10, 17A30, 17B30}
\maketitle



\allowdisplaybreaks


\section{Introduction}
This paper presents a categorical investigation of phase spaces, pre-Lie bialgebras, Manin triples, classical $\frks$-matrices, and relative Rota-Baxter operators through the systematic introduction of their respective morphisms. By demonstrating that these morphisms preserve natural correspondences among these classes, we elevate the interrelations between these structures to the level of categorical correspondences. This work bridges algebraic structure theory with category-theoretic methods, demonstrating how categorical perspectives can systematically organize and deepen understanding of classical algebraic constructions.
\subsection{Phase spaces of Lie algebras, pre-Lie bialgebras and the related structures}
Pre-Lie algebras (or left-symmetric algebras) are a class of nonassociative algebras coming from the study of convex homogeneous cones, affine manifolds and affine structures on Lie groups, deformation of associative algebras and then  appeared in many fields in mathematics and mathematical physics, such as complex and symplectic structures on Lie groups and Lie algebras
(\cite{BBM,Chu,Kan}),  Poisson brackets and infinite dimensional Lie algebras (\cite{BaN}), vertex algebras (\cite{Bakalov}), $F$-manifold algebras (\cite{Dot19,LSB}), homotopy algebra structures (\cite{Ban,DSV}) and operads (\cite{CL}). See  the survey articles \cite{Bai21} and the references therein for more details.

The concept of phase space associated with a Lie algebra was initially introduced by Kupershmidt in \cite{Ku94} and subsequently generalized in \cite{Bai06}. Kupershmidt demonstrated that pre-Lie algebras constitute the fundamental structures underlying the phase spaces of Lie algebras, forming a natural category that holds significant importance in both classical and quantum mechanics (\cite{Ku99a}). Furthermore, phase spaces can be precisely characterized as para-K\"{a}hler structures on Lie algebras. From a geometric perspective, a para-K\"{a}hler manifold is defined as a symplectic manifold equipped with a pair of transversal Lagrangian foliations. Specifically, a para-K\"{a}hler Lie algebra represents the Lie algebra of a Lie group $G$ endowed with a  $G$-invariant para-K\"{a}hler structure (\cite{Kan}).  In \cite{Left-symmetric bialgebras}, Bai established a fundamental result showing that every para-K\"{a}hler Lie algebra is isomorphic to a phase space of a Lie algebra. For further exploration of phase spaces, para-K\"{a}hler Lie algebras, and their diverse applications, we refer readers to \cite{And,Andrada,BajoBena,BeM,Cal,Cal1}.

Building upon the investigation of phase spaces of Lie algebras through  pre-Lie algebras, the concepts of pre-Lie bialgebras (also referred to as left-symmetric bialgebras), matched pairs, and Manin triples for pre-Lie algebras were systematically developed in \cite{Left-symmetric bialgebras}. Notably, the introduction of coboundary pre-Lie bialgebras naturally leads to a pre-Lie algebraic analogue of the classical Yang-Baxter equation (CYBE). Solutions to this equation, known as classical $\frks$-matrices, provide a direct construction of both pre-Lie bialgebras and phase spaces of Lie algebras. A particularly interesting connection emerges through Hessian structures on pre-Lie algebras, which correspond to affine Lie groups  $G$ equipped with $G$-invariant Hessian metrics (\cite{Shima}). These structures yield non-degenerate symmetric classical $\frks$-matrices, establishing a profound link between geometric and algebraic constructions. In \cite{Ku99b}, Kupershmidt introduced the notion of a relative Rota-Baxter operator (also called an $\huaO$-operator) on a Lie algebra in order to better understand the relationship between the classical Yang-Baxter equation and the related integrable systems. A symmetric classical  $\frks$-matrix can be equivalently described by a relative Rota-Baxter operator with respective to the coregular representation. For a comprehensive treatment of Rota-Baxter operators, we refer to the monograph \cite{Gub}. The intricate relationships among these mathematical structures can be effectively summarized in the following diagram:

\vspace{-.1cm}
\begin{equation}
    \begin{split}
        \xymatrix{
            &&\text{matched pairs of}\atop \text{pre-Lie algebras}& \\
                  & 
             \text{solutions of}\atop \text{CYBE} 
            \ar@2{->}[r]& \text{pre-Lie}\atop \text{ bialgebras}
            \ar@2{<->}[r] \ar@2{<->}[d] \ar@2{<->}[u]& \text{phase spaces}\atop \text{of Lie algebras}.\\
            &&\text{Manin triples for}\atop \text{pre-Lie algebras} & }
    \end{split}
    \label{eq:bigdiag}
\end{equation}
\vspace{-.6cm}

\subsection{Morphisms of bialgebras, Manin triples and matched pairs} 
To advance the study and applications of these important structures and their interrelations as depicted in the diagram, it is crucial to understand them within the framework of category theory. The key step in this direction involves defining appropriate morphisms for these structures to organize them into well-defined categories. Such a categorical formulation would enable the precise characterization of their relationships through functors and equivalences between these categories. However, this endeavor presents significant challenges. While one can define a notion of homomorphisms for pre-Lie bialgebras and Manin triples for pre-Lie algebras by analogy with the corresponding homomorphisms of Lie bialgebras and Manin triples for Lie algebras (as presented in \cite{CP}), this approach encounters fundamental limitations. Specifically, the homomorphisms of Manin triples fail to align consistently with the homomorphisms of their corresponding pre-Lie bialgebras. This discrepancy highlights the need for a more refined approach to defining morphisms that can properly capture the categorical structure of these mathematical objects.

\emptycomment{For further studies and applications of the important structures and their relations in the above diagram, it is significant to understand them in the context of categories. The crucial step is to define suitable morphisms for these structures to make them into categories, so that their relations can be made precise as functors and equivalences among these categories. Unfortunately, the usually defined morphisms for this purpose fail. A notion of usual homomorphisms of pre-Lie bialgebras and Manin triples for pre-Lie algebras can be defined in analog to homomorphisms of Lie bialgebras and Manin triples for Lie algebras in \cite{CP}. However, the homomorphisms of Manin triples do not agree with the homomorphisms of their corresponding Lie pre-bialgebras. }

To address the challenges at the Lie algebra level, the authors in \cite{BGS} utilize two key strategies. The first involves a polarization process that shifts the focus from homomorphisms to endomorphisms. The second strategy entails reordering operations to endow Lie algebras with additional structures, specifically bialgebras and endomorphisms. This polarization strategy can be likened to transforming a homogeneous polynomial in a single variable into a multilinear form in multiple variables through linear substitutions and derivations \cite{Pr}. Rather than attempting to define morphisms between arbitrary Lie bialgebras, the authors concentrate on the specific case of endomorphisms for a given object. To define these endomorphisms for a Lie bialgebra, they treat it as an additional endomorphism structure layered onto the Lie bialgebra, which itself is derived from imposing a bialgebra structure on a Lie algebra. The crux of their approach lies in reversing the sequence of these two processes: they first augment Lie algebras with endomorphisms, resulting in what they term { endo Lie algebras}, and subsequently introduce a bialgebra structure to these endo Lie algebras. This sequence is illustrated in the following diagram, specifically for the case of Lie bialgebras.

\emptycomment{To overcome the problem in the Lie algebra level, the authors in \cite{BGS} utilize two strategies. The first strategy is a polarization process that allows them to first consider endomorphisms instead of homomorphisms. The second strategy is a change of order of the operations which equip Lie algebras with two extra structures: bialgebras and endomorphisms. Their strategy of polarization is in a sense similar to polarizing a homogeneous polynomial in one variable to a multilinear form in multivariable by linear substitutions and derivations \cite{Pr}.  So for each of the classes of constructions such as Lie bialgebras, instead of attempting to define morphisms between any two of them, they focus on the special case of endomorphisms on any given object. Furthermore, to define endomorphisms for a Lie bialgebra, they regard it as equipping an extra endomorphism structure to the  Lie bialgebra which, by itself, is obtained from equipping a bialgebra structure to a Lie algebra. The strategy for this purpose is to switch the order of these two processes. They first equip Lie algebras with endomorphisms, called { endo Lie algebras} and then  introduce a bialgebra structure for endo Lie algebras. This is shown in the following diagram for the instance of Lie bialgebras. }

\vspace{-0.5cm}
\begin{equation}
    \begin{split}
        \xymatrix{
            \text{\small Lie algebras} \ar[rrr]^{\text{\small endomorphism}} \ar[d]^{\text{\small bialgebraization}} &&& \text{\small Endo Lie algebras} \ar[d]^{\text{\small bialgebraization}} \\
            \text{\small Lie bialgebras} \ar[rrr]^{\text{\small endomorphism}}
            &&& {\text{\small Endo Lie bialgebras}}
        .}
    \end{split}
    \label{eq:diag}
\end{equation}
\vspace{-0.3cm}

With this framework, each structure (such as Lie bialgebras, Manin triples, and matched pairs) on endo Lie algebras naturally leads to a notion of endomorphisms for the corresponding structures on Lie algebras. Once this is established, a polarization process, as previously mentioned, extends the concept of endomorphisms to homomorphisms, thereby equipping each class of objects with a categorical structure.

Guided by this philosophy, we first introduce the notion of an endo pre-Lie algebra, which is a pre-Lie algebra endowed with a pre-Lie algebra endomorphism. We then investigate matched pairs, Manin triples, bialgebras for endo pre-Lie algebras, and endo phase spaces for Lie algebras, which in turn induce endomorphisms of these structures on pre-Lie algebras. Subsequently, a polarization process generalizes the notion of endomorphisms to homomorphisms, endowing each class of objects with a category structure. The relationships between these categories are summarized in the following diagram:

\emptycomment{With this ideal, each of the structures (like Lie bialgebras, Manin triples, mathched pairs) on endo Lie algebras naturally gives rise to a notion of endomorphisms of this structure on Lie algebras. Once this is done, then as noted above, a polarization process extends the notion of endomorphisms to a notion of homomorphisms, equipping each class of objects with a category structure.  

Following this philosophy, we first introduce the notion of an endo pre-Lie algebra, which is a pre-Lie algebra equipped with a pre-Lie algebra endomorphism. Then we study the matched pairs, Manin triples and bialgebras for endo pre-Lie algebras, which induce the endomorphisms of this structure on pre-Lie algebras. Next, a polarization process extends the notion of endomorphisms to a notion of homomorphisms, equipping each class of objects with a category structure. The relations between these categories are summarized in the following  diagram. }

\vspace{-.5cm}
\begin{equation}
\label{eq:bigdiagcat}
\begin{split}
 \xymatrix{
 && {\begin{subarray}{c} \text{category of}\\ \text{matched pairs of}\\ \text{pre-Lie algebras} \end{subarray}}&&\\
{\begin{subarray}{c} \text{category of } \\ \text{solutions of}\\ \text{CYBE}
\end{subarray}} 
 \ar@2{->}[rr]&& {\begin{subarray}{c} \text{category of} \\ \text{pre-Lie bialgebras}\end{subarray}} \ar@2{<->}[d] \ar@2{<->}[u] \ar@2{<->}[rr]&&{\begin{subarray}{c} \text{category of } \\ \text{phase spaces}.
\end{subarray}}\\
&& {\begin{subarray}{c} \text{category of} \\ \text{Manin triples for}\\ \text{pre-Lie algebras}\end{subarray}}
&&}
\end{split}
\end{equation}
\vspace{-.5cm}

\subsection{Morphism of phase spaces, classical $\frks$-matrices and relative Rota-Baxter operators}

A phase space of a Lie alebra $\g$ is a natural symplectic structure $\omega_p$ on the Lie algebra $\g\oplus\g^*$ such that both $\g$ and $\g^*$ are Lie subalgebras. A natural approach to define endomorphisms between phase spaces is to use symplectic endomorphisms. However, this approach fails to establish a categorical correspondence between phase spaces of Lie algebras and Manin triples for pre-Lie algebras. In \cite{Ku99b}, Kupershmidt demonstrated that a symplectic structure on a Lie algebra is equivalent to an invertible skew-symmetric relative Rota-Baxter operator with respect to the coadjoint representation. To address this, we introduce a coherent homomorphism between symplectic Lie algebras, which establishes an equivalence between the category of symplectic Lie algebras with coherent homomorphisms and the category of invertible skew-symmetric relative Rota-Baxter operators with their natural homomorphisms. Notably, this coherent homomorphism between phase spaces arises as the extended endomorphism from the endo phase spaces.

Similarly, a symmetric classical $\frks$-matrix on a pre-Lie algebra is equivalent to a symmetric relative Rota-Baxter operator on its sub-adjacent Lie algebra with respect to the coregular representation. From a categorical perspective, we introduce another homomorphism between $\frks$-matrices, ensuring that the induced categories are equivalent. Recently, the deformation theory of relative Rota-Baxter operators on Lie algebras has been developed in \cite{TBGS}. The authors provide a cohomology theory for relative Rota-Baxter operators and use it to study their deformations. In this paper, leveraging the close relationship between $\frks$-matrices and relative Rota-Baxter operators, we develop a cohomology theory for $\frks$-matrices on pre-Lie algebras. Furthermore, using this new homomorphism between symmetric classical $\frks$-matrices, we investigate the deformations of $\frks$-matrices.

\emptycomment{A phase space of a Lie alebra $\g$ is a natural symplectic structure $\omega_p$ on the Lie algebra $\g\oplus\g^*$ such that $\g$ and $\g^*$ are Lie subalgebras. A natural way to define the endomorphism between the phase space is to use the symplectic endomorphism. Unfortunately, this way also fails to build the categorical correspondence between phase spaces of Lie algebras and  Manin triples for pre-Lie algebras. In \cite{Ku99b}, the author showed that a symplectic structure on a Lie algebra is equivalent to an invertible skew-symmetric relative Rota-Baxter operator with respective to the coadjoint representation. Therefore, we introduce a coherent homomorphism between symplectic Lie algebras, which induces an equivalence between the category of symplectic Lie algebras with coherent homomorphisms and the category of invertible skew-symmetric relative Rota-Baxter operators with their natural homomorphisms. On the other hand, this coherent homomorphism between phase spaces is just the extended homomorphism from the endo phase spaces.

Similarly, a symmetric classical $\frks$-matrix on a pre-Lie algebra is  equivalent to a symmetric relative Rota-Baxter operator on its sub-adjacent Lie algebra with respective to the coregular representation. From the categorical viewpoint, we introduce another homomorphism between $\frks$-matrices such that the induced categories are equivalent. Recently, deformation theory of relative Rota-Baxter operators on Lie algebras has been developed in \cite{TBGS}. They give a cohomology for a relative Rota-Baxter operator and use it to study deformations of relative Rota-Baxter operators. In this paper, based on the closed relationships between $\frks$-matrices and relative Rota-Baxter operators, we develop cohomology of $\frks$-matrices on pre-Lie algebras. Furthermore, by using this new homomorphism between symmetric classical $\frks$-matrices, we study the deformations of $\frks$-matrices.}

\subsection{Outline of the paper}

The paper is organized as following. In Section \ref{sec:pre}, we first recall the representations and cohomology of pre-Lie algebras. Then we recall the graded Lie algebra whose Maurer-Cartan elements are relative Rota-Baxter operators on Lie algebras and the cohomology for a relative Rota-Baxter operator.

In Section \ref{sec:endoprelie}, we introduce the notion of an endo pre-Lie algebra and develop a bialgebra theory for endo pre-Lie algebras together with their equivalences to  matched pairs, Manin triples of endo pre-Lie algebras and endo phase phases (Theorem \ref{thm:equvialent}), which leads to the notion of coherent endomorphisms for each class of structures.

In Section \ref{sec:coherent endoprelie}, we interpret a bialgebra of endo pre-Lie algebras as an endomorphism  of a pre-Lie bialgebra, which then naturally generalizes to a homomorphism of pre-Lie bialgebras that is compatible with that of phase spaces of Lie algebras, Manin triples and   matched pairs of pre-Lie algebras, showing that the correspondences among pre-Lie bialgebras, phase spaces of Lie algebras, Manin triples for pre-Lie algebras  and matched pairs of pre-Lie algebras are equivalences of categories (Theorem \ref{thm:main theorem}). In particular, we build the equivalences of categories between symplectic Lie algebras and corresponding relative Rota-Baxter operators (Proposition \ref{pro:weak-homomorphsim symp}).

In Section \ref{sec:def-phase}, we introduce the weak homomorphism between symmetric classical $\frks$-matrices and show that the equivalences of categories between symmetric classical $\frks$-matrices and corresponding relative Rota-Baxter operators (Proposition \ref{pro:weak-homomorphsim}). We use the graded Lie algebra for relative Rota-Baxter operators on Lie algebras to construct a graded Lie algebra whose Maurer-Cartan elements characterize symmetric classical $\frks$-matrices on pre-Lie algebras (Theorem \ref{thm:MC char}).  Furthermore, we use the graded Lie algebra for $\frks$-matrices to establish the cohomology of $\frks$-matrices and use the this cohomology and weak homomorphisms to study their deformations. 

 In this paper, all the vector spaces are over algebraically closed field $\mathbb K$ of characteristic $0$ and finite dimensional.

\vspace{2mm}
 \noindent {\bf Acknowledgement:} This research is supported by  NSFC (12201068, 12371029, W2412041) and the National Key Research and Development Program of China (2021YFA1002000).

\section{Preliminaries} \label{sec:pre}

\subsection{Pre-Lie algebras, representations and cohomologies}
\begin{defi}  A {\bf pre-Lie algebra} is a pair $(\g,\cdot_\g)$, where $\g$ is a vector space and  $\cdot_\g:\g\otimes \g\longrightarrow \g$ is a bilinear multiplication
satisfying that for all $x,y,z\in \g$, the associator
$(x,y,z)=(x\cdot_\g y)\cdot_\g z-x\cdot_\g(y\cdot_\g z)$ is symmetric in $x,y$,
i.e.
$$(x,y,z)=(y,x,z),\;\;{\rm or}\;\;{\rm
equivalently,}\;\;(x\cdot_\g y)\cdot_\g z-x\cdot_\g(y\cdot_\g z)=(y\cdot_\g x)\cdot_\g
z-y\cdot_\g(x\cdot_\g z).$$
\end{defi}

Let $(\g,\cdot_\g)$ be a pre-Lie algebra. The commutator $
[x,y]_\g=x\cdot_\g y-y\cdot_\g x$ defines a Lie algebra structure
on $\g$, which is called the {\bf sub-adjacent Lie algebra} of
$(\g,\cdot_\g)$ and denoted by $\g^c$. Furthermore,
$L:\g\longrightarrow \gl(\g)$ with $x\rightarrow L_x$, where
$L_xy=x\cdot_\g y$, for all $x,y\in \g$, gives a representation of
the Lie algebra $\g^c$ on $\g$.

\begin{defi}{\rm (\cite{Left-symmetric bialgebras})}
Let $(\g,\cdot_\g)$ be a pre-Lie algebra and $V$  a vector
space. A {\bf representation} of $\g$ on $V$ consists of a pair
$(\rho,\mu)$, where $\rho:\g\longrightarrow \gl(V)$ is a representation
of the Lie algebra $\g^c$ on $V $ and $\mu:\g\longrightarrow \gl(V)$ is a linear
map satisfying \begin{eqnarray}\label{representation condition 2}
 \rho(x)\mu(y)u-\mu(y)\rho(x)u=\mu(x\cdot_\g y)u-\mu(y)\mu(x)u, \quad \forall~x,y\in \g,~ u\in V.
\end{eqnarray}
\end{defi}

Usually, we denote a representation by $(V;\rho,\mu)$. It is
obvious that $(  \K  ;\rho=0,\mu=0)$ is a representation, which we
call the {\bf trivial representation}. Let $R:\g\rightarrow
\gl(\g)$ be a linear map with $x\longrightarrow R_x$, where the
linear map $R_x:\g\longrightarrow\g$  is defined by
$R_x(y)=y\cdot_\g x,$ for all $x, y\in \g$. Then
$(\g;\rho=L,\mu=R)$ is also a representation, which we call the
{\bf regular representation}. Define two linear maps $L^*,R^*:\g\longrightarrow
\gl(\g^*)$   with $x\longrightarrow L^*_x$ and
$x\longrightarrow R^*_x$ respectively (for all $x\in \g$)
by
\begin{equation}
\langle L_x^*(\alpha),y\rangle=-\langle \alpha, x\cdot_\g y\rangle, \;\;
\langle R_x^*(\alpha),y\rangle=-\langle \alpha, y\cdot_\g x\rangle, \;\;
\forall x, y\in \g, \alpha\in \g^*.
\end{equation}
Then $(\g^*;\rho={\rm ad}^*=L^*-R^*, \mu=-R^*)$ is a
representation of $(\g,\cdot_\g)$. In fact, it is the dual
representation of the regular representation $(\g;L,R)$ and is called {\bf coregular representation}.

The cohomology complex for a pre-Lie algebra $(\g,\cdot_\g)$ with a representation $(V;\rho,\mu)$ is given as follows (\cite{Burde}).
The set of $n$-cochains is given by
$\Hom(\wedge^{n-1}\g\otimes \g,V),\
n\geq 1.$  For all $\phi\in \Hom(\wedge^{n-1}\g\otimes \g,V)$, the coboundary operator $\dM:\Hom(\wedge^{n-1}\g\otimes \g,V)\longrightarrow \Hom(\wedge^{n}\g\otimes \g,V)$ is given by
 \begin{eqnarray}\label{eq:pre-Lie cohomology}
 \dM\phi(x_1, \cdots,x_{n+1})
 \nonumber&=&\sum_{i=1}^{n}(-1)^{i+1}\rho(x_i)\phi(x_1, \cdots,\hat{x_i},\cdots,x_{n+1})\\
\nonumber &&+\sum_{i=1}^{n}(-1)^{i+1}\mu(x_{n+1})\phi(x_1, \cdots,\hat{x_i},\cdots,x_n,x_i)\\
 \nonumber&&-\sum_{i=1}^{n}(-1)^{i+1}\phi(x_1, \cdots,\hat{x_i},\cdots,x_n,x_i\cdot_\g x_{n+1})\\
\label{eq:cobold} &&+\sum_{1\leq i<j\leq n}(-1)^{i+j}\phi([x_i,x_j]_\g,x_1,\cdots,\hat{x_i},\cdots,\hat{x_j},\cdots,x_{n+1}),
\end{eqnarray}
for all $x_i\in \g,~i=1,\cdots,n+1$. \emptycomment{In particular, we use the
symbol $\dM^T$ to refer the coboundary operator   associated to
the trivial representation and $\dr$ to refer the coboundary
operator  associated to the regular representation. We denote the
$n$-th cohomology group for the coboundary operator $\dr$  by
$H_{\rm reg}^n(\g,\g)$ and $H_{\rm reg}(\g,\g)=\oplus_{n}H_{\rm reg}^n(\g,\g)$.}

\subsection{Relative Rota-Baxter operators on Lie algebras and their Maurer-Cartan characterizations }
\begin{defi}{\rm(\cite{Ku99b})}
Let $(\g,[\cdot,\cdot]_\g)$ be a Lie algebra and $\rho:\g\longrightarrow\gl(V)$ a representation of $\g$ on a vector space $V$. A {\bf relative Rota-Baxter operator} (or $\huaO$-operator) on $\g$ with respect to the representation $(V;\rho)$ is a linear map $T:V\longrightarrow\g$ such that
 \begin{equation}
   [Tu,Tv]_\g=T\big(\rho(Tu)(v)-\rho(Tv)(u)\big),\quad \forall~u,v\in V.
 \label{eq:defiO}
 \end{equation}
\end{defi}

\begin{defi}
    Let $T_\g$ and $T_\h$ be relative Rota-Baxter operators on Lie algebras $\g$ and $\h$ associated to representations $(V_\g;\rho_\g)$ and $(V_\h;\rho_\h)$
    respectively. A {\bf homomorphism of relative Rota-Baxter operators} from $T_\g$ to $T_\h$ consists
    of a Lie algebra homomorphism  $\phi:\g\rightarrow\h$ and a linear map $\alpha:V_\g\rightarrow V_\h$ such that for all $x\in\g, v\in V_\g$,
    \begin{eqnarray}
        \alpha\rho_\g(x)(v)&=&\rho_\h(\phi(x))(\alpha(v)),\label{defi:isocon2b}\\
        T_\h\circ \alpha &=&\phi\circ T_\g.\label{defi:isocon1b}
    \end{eqnarray}
    In particular, if $\phi$ and $\alpha$ are  invertible,  then $(\phi,\alpha)$ is called an  {\bf isomorphism}  from $T_\g$ to
    $T_\h$. Let ${\bf RB}$ denote the category of
    relative Rota-Baxter operators with the above morphisms.
\end{defi}

\emptycomment{\begin{defi}
Let $T_1$ and $T_2$ be two relative Rota-Baxter operators on a Lie algebra $\g$ with respect to the representation $(V;\rho)$. A {\bf homomorphism} from $T_2$ to $T_1$ consists of a Lie algebra homomorphism $\phi:\g\rightarrow \g$ and a linear map $\psi:V\rightarrow V$ satisfying
\begin{eqnarray}
  T_1\circ \psi &=& \phi\circ T_2, \\
  \psi(\rho(x)v) &=& \rho(\phi(x))\psi(v),\quad x\in\g,v\in V.
\end{eqnarray}
Furthermore, if $\phi$ and $\psi$ are linear isomorphism, $(\phi,\psi)$ is called an {\bf isomorphism} from $T_2$ to $T_1$.
\end{defi}}

\begin{thm}{\rm(\cite{Bai-1})}
Let $T:V\to \g$ be a relative Rota-Baxter operator on a Lie algebra $(\g,[\cdot,\cdot]_\g)$ with respect to a representation $(V;\rho)$. Define a multiplication $\cdot^T$ on $V$ by
\begin{equation}
  u\cdot^T v=\rho(Tu)(v),\quad \forall u,v\in V.
\end{equation}
Then $(V,\cdot^T)$ is a pre-Lie algebra.
 \end{thm}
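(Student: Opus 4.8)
The plan is to verify directly that the bilinear operation $\cdot^T$ satisfies the defining axiom of a pre-Lie algebra, namely that the associator $(u,v,w)=(u\cdot^T v)\cdot^T w-u\cdot^T(v\cdot^T w)$ is symmetric in its first two slots. First I would unfold the associator using $u\cdot^T v=\rho(Tu)(v)$, obtaining
\begin{equation*}
(u,v,w)=\rho\big(T(\rho(Tu)(v))\big)(w)-\rho(Tu)\big(\rho(Tv)(w)\big),
\end{equation*}
so that the whole problem is reduced to showing that $(u,v,w)-(v,u,w)=0$ for all $u,v,w\in V$.

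Next I would form that difference and sort the resulting four terms into two natural groups: the two terms in which $T$ is applied to an inner expression, and the two terms that are iterated applications of $\rho$. Collecting the first group gives $\rho\big(T(\rho(Tu)(v)-\rho(Tv)(u))\big)(w)$, while the second group assembles into $-\big(\rho(Tu)\rho(Tv)-\rho(Tv)\rho(Tu)\big)(w)$, i.e.\ minus the commutator $[\rho(Tu),\rho(Tv)]$ applied to $w$.

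The two defining hypotheses then interlock exactly. The relative Rota-Baxter identity \eqref{eq:defiO} rewrites $T(\rho(Tu)(v)-\rho(Tv)(u))$ as the bracket $[Tu,Tv]_\g$, turning the first group into $\rho([Tu,Tv]_\g)(w)$. Since $\rho$ is a representation of the Lie algebra $\g$, we have $\rho([Tu,Tv]_\g)=\rho(Tu)\rho(Tv)-\rho(Tv)\rho(Tu)$, which is precisely the commutator cancelling the second group. Hence $(u,v,w)-(v,u,w)=0$ and left-symmetry holds.

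I do not expect a genuine obstacle here: the statement reduces to a short direct computation. The only point worth emphasizing is that both structural inputs are used in an essential way—the Rota-Baxter identity supplies the bracket $[Tu,Tv]_\g$, and the representation axiom for $\rho$ converts that bracket back into the operator commutator, so that the symmetrization of the associator vanishes identically.
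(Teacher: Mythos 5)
Your proof is correct: the computation of the associator, the grouping of terms, and the use of the Rota--Baxter identity \eqref{eq:defiO} together with the representation property $\rho([Tu,Tv]_\g)=[\rho(Tu),\rho(Tv)]$ are exactly what is needed, and this is the standard direct verification found in the cited source. Note that the paper itself gives no proof for this statement (it is quoted from \cite{Bai-1}), so there is nothing further to compare against.
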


We denote by $(V,[\cdot,\cdot]^T)$ the sub-adjacent Lie algebra of the pre-Lie algebra $(V,\cdot^T)$. More precisely,
\begin{equation}\label{eq:bracketT}
 ~[u,v]^T=\rho(Tu)(v)-\rho(Tv)(u).
\end{equation}
Moreover,  $T$ is a Lie algebra homomorphism from $(V,[\cdot,\cdot]^T)$ to $(\g,[\cdot,\cdot]_\g)$.

\begin{defi}
  Let $(\g=\oplus_{k=0}^\infty\g_i,[\cdot,\cdot],\dM)$ be a differential graded Lie algebra.  A degree $1$ element $\theta\in\g_1$ is called a {\bf Maurer-Cartan element} of $\g$ if it
  satisfies the following {\bf Maurer-Cartan equation}:
  \begin{equation}
  \dM \theta+\half[\theta,\theta]=0.
  \label{eq:mce}
  \end{equation}
  \end{defi}
A graded Lie algebra is a differential graded Lie
algebra with $d=0$.

Let $(V;\rho)$ be a representation of a Lie algebra $\g$. Consider the graded vector space
$$\huaC^*(V,\g):=\oplus_{k=0}^{\dim(V)}\Hom(\wedge^{k}V,\g).$$
Define a skew-symmetric bracket operation $$\Courant{\cdot,\cdot}: \Hom(\wedge^nV,\g)\times \Hom(\wedge^mV,\g)\longrightarrow \Hom(\wedge^{m+n}V,\g)$$ by
\begin{eqnarray}
&&\nonumber\Courant{P,Q}(u_1,u_2,\cdots,u_{m+n})\\
\label{o-bracket}&=&\sum_{\sigma\in \mathbb S_{(m,1,n-1)}}(-1)^{\sigma}P(\rho(Q(u_{\sigma(1)},\cdots,u_{\sigma(m)}))u_{\sigma(m+1)},u_{\sigma(m+2)},\cdots,u_{\sigma(m+n)})\\
\nonumber&&-(-1)^{mn}\sum_{\sigma\in \mathbb S_{(n,1,m-1)}}(-1)^{\sigma}Q(\rho(P(u_{\sigma(1)},\cdots,u_{\sigma(n)}))u_{\sigma(n+1)},u_{\sigma(n+2)},\cdots,u_{\sigma(m+n)})\\
\nonumber&&+(-1)^{mn}\sum_{\sigma\in \mathbb S_{(n,m)}}(-1)^{\sigma}[P(u_{\sigma(1)},\cdots,u_{\sigma(n)}),Q(u_{\sigma(n+1)},\cdots,u_{\sigma(m+n)})]
\end{eqnarray}
for all $P\in\Hom(\wedge^nV,\g)$ and $Q\in\Hom(\wedge^mV,\g)$.

Furthermore, one has
\begin{thm}\label{pro:gla}{\rm(\cite{TBGS})}
 $(\huaC^*(V,\g),\Courant{\cdot,\cdot})$ is a graded Lie algebra. Its Maurer-Cartan elements are precisely the relative Rota-Baxter operators on $\g$ with respect to the representation $(V;\rho)$.
\end{thm}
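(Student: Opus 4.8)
The plan is to realise $(\huaC^*(V,\g),\Courant{\cdot,\cdot})$ as a derived-bracket algebra sitting inside the Nijenhuis--Richardson graded Lie algebra of the vector space $W:=\g\oplus V$, so that graded antisymmetry and the graded Jacobi identity are inherited rather than checked by hand. Concretely, I would first form $\frkL:=\bigoplus_{k}\Hom(\wedge^{k+1}W,W)$ with its Nijenhuis--Richardson bracket $[\cdot,\cdot]_{\Nij}$, for which it is classical that $\frkL$ is a graded Lie algebra and that a degree-$1$ element $\pi\in\Hom(\wedge^2 W,W)$ satisfies $[\pi,\pi]_{\Nij}=0$ precisely when $\pi$ is a Lie bracket on $W$. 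The representation data enters through the semidirect-product bracket $\pi\big((x,u),(y,v)\big)=\big([x,y]_\g,\ \rho(x)v-\rho(y)u\big)$, which is a genuine Lie bracket on $W$ and hence such a Maurer--Cartan element of $\frkL$.

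Next I would isolate $\huaC^*(V,\g)$ inside $\frkL$. Using $\wedge^\bullet W=\bigoplus_{p,q}\wedge^p\g\otimes\wedge^q V$, assign to a map out of $\wedge^p\g\otimes\wedge^q V$ the auxiliary degree $p-1$ if it is $\g$-valued and $p$ if it is $V$-valued. A short computation shows this auxiliary degree is additive under $[\cdot,\cdot]_{\Nij}$, is always $\geq-1$, and that its $(-1)$-part is exactly $\Hom(\wedge^\bullet V,\g)=\huaC^*(V,\g)$. Therefore $\frka:=\huaC^*(V,\g)$ is an abelian subalgebra (any self-bracket would have auxiliary degree $-2$, where no elements exist), the projection $\pr\colon\frkL\to\frka$ has kernel $\bigoplus_{\mathrm{aux}\geq0}$, which is a subalgebra by additivity, and $\pi$ has auxiliary degree $1$, so $\pi\in\ker\pr$. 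This is precisely the input of Voronov's higher-derived-bracket theorem, which makes $\frka$ a graded Lie algebra under $\{a,b\}:=\pr\big[[\pi,a]_{\Nij},b\big]_{\Nij}$; here the would-be differential $\pr[\pi,\cdot]_{\Nij}$ and all ternary and higher derived brackets vanish for auxiliary-degree reasons (since $\pi$ is quadratic and $\frka$ occupies a single auxiliary degree), so one obtains an honest graded Lie algebra with $d=0$. It then remains to verify that this derived bracket coincides with $\Courant{\cdot,\cdot}$.

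That identification is where I expect the real work to lie. Expanding $\big[[\pi,P]_{\Nij},Q\big]_{\Nij}$ and splitting $\pi$ into its $[\cdot,\cdot]_\g$-part and its $\rho$-part, the $\rho$-contributions produce the two operator terms of \eqref{o-bracket}, one with $P$ outermost and one with $Q$ outermost carrying the sign $-(-1)^{mn}$, while the $[\cdot,\cdot]_\g$-part produces the bracket term; the shuffle index sets $\mathbb S_{(m,1,n-1)}$, $\mathbb S_{(n,1,m-1)}$ and $\mathbb S_{(n,m)}$ are exactly those generated by the Nijenhuis--Richardson insertions. The main obstacle is the Koszul sign bookkeeping in this matching, compounded by the degree shift between the Nijenhuis--Richardson degree $k-1$ and the degree $k$ placed on $\Hom(\wedge^k V,\g)$. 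One could instead bypass Voronov and check graded antisymmetry and the graded Jacobi identity for $\Courant{\cdot,\cdot}$ by brute force, but that is a longer sign computation with no conceptual gain.

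Finally, for the Maurer--Cartan characterization I would specialize. For $T\in\Hom(V,\g)$, a direct evaluation of \eqref{o-bracket} with $m=n=1$ gives
\[\Courant{T,T}(u,v)=2\Big(T\big(\rho(Tu)v-\rho(Tv)u\big)-[Tu,Tv]_\g\Big),\]
so that, since the differential is $0$, the Maurer--Cartan equation $\tfrac12\Courant{T,T}=0$ reads $[Tu,Tv]_\g=T\big(\rho(Tu)v-\rho(Tv)u\big)$, which is exactly \eqref{eq:defiO}. Hence the Maurer--Cartan elements of $(\huaC^*(V,\g),\Courant{\cdot,\cdot})$ are precisely the relative Rota--Baxter operators on $\g$ with respect to $(V;\rho)$.
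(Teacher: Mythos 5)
This theorem is quoted from \cite{TBGS} and the present paper gives no proof of its own, and your route --- the Nijenhuis--Richardson graded Lie algebra on $\g\oplus V$, the semidirect-product Lie structure as a square-zero Maurer--Cartan element, $\Hom(\wedge^\bullet V,\g)$ isolated as the abelian subalgebra of lowest bidegree, and Voronov's derived-bracket theorem with the unary and higher brackets killed by degree reasons --- is exactly the construction used in that cited source. Your bidegree bookkeeping and the concluding computation $\Courant{T,T}(u,v)=2\bigl(T(\rho(Tu)v-\rho(Tv)u)-[Tu,Tv]_\g\bigr)$ are correct, so the proposal is sound and matches the proof the paper relies on.
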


In the following, we recall cohomology for relative Rota-Baxter operators, which corresponds to the
Chevalley-Eilenberg cohomology.
\begin{lem}\label{lem:rep}{\rm(\cite{TBGS})}
Let $T$ be a relative Rota-Baxter operator on a Lie
algebra $\g$ with respect to a representation $(V;\rho)$.
  Define
 \begin{equation}
\varrho:V\longrightarrow\gl(\g), \quad     \varrho(u)(x):=[Tu,x]_\g+T\rho(x)(u),\;\;\forall x\in \g,u\in
    V.
  \end{equation}
Then  $\varrho$ is a representation of the sub-adjacent Lie algebra $(V,[\cdot,\cdot]^T)$ on the vector space $\g$.
  \end{lem}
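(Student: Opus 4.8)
The plan is to check directly that $\varrho$ preserves brackets, that is,
\[
\varrho([u,v]^T)=\varrho(u)\varrho(v)-\varrho(v)\varrho(u)\qquad\text{for all }u,v\in V,
\]
as endomorphisms of $\g$; since $\varrho$ is manifestly linear, this is the only identity to establish.

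First I would expand the left-hand side. Because $T$ is a Lie algebra homomorphism from $(V,[\cdot,\cdot]^T)$ to $(\g,[\cdot,\cdot]_\g)$ --- equivalently by the defining identity \eqref{eq:defiO} --- we have $T[u,v]^T=[Tu,Tv]_\g$, and together with \eqref{eq:bracketT} this gives
\[
\varrho([u,v]^T)(x)=[[Tu,Tv]_\g,x]_\g+T\rho(x)\rho(Tu)(v)-T\rho(x)\rho(Tv)(u).
\]

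Then I would expand the right-hand side by applying the definition of $\varrho$ twice, producing eight terms that I would sort by how many times $T$ is applied. The two purely-$\g$ terms $[Tu,[Tv,x]_\g]_\g-[Tv,[Tu,x]_\g]_\g$ collapse to $[[Tu,Tv]_\g,x]_\g$ by the Jacobi identity, matching the first term above. Rewriting the terms $[Tu,T\rho(x)(v)]_\g-[Tv,T\rho(x)(u)]_\g$ with \eqref{eq:defiO} produces two ``double-$T$'' terms $T\rho(T\rho(x)(v))(u)$ and $T\rho(T\rho(x)(u))(v)$ which cancel against the double-$T$ terms coming from the outer $T\rho(\cdot)$ part of $\varrho$. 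Finally, expanding the remaining terms $T\rho([Tv,x]_\g)(u)-T\rho([Tu,x]_\g)(v)$ with the fact that $\rho$ is a representation, $\rho([y,z]_\g)=\rho(y)\rho(z)-\rho(z)\rho(y)$, a last cancellation leaves exactly $T\rho(x)\rho(Tu)(v)-T\rho(x)\rho(Tv)(u)$, completing the identity. The only real obstacle is the bookkeeping: the equality emerges only after the Jacobi identity, the relative Rota-Baxter equation \eqref{eq:defiO}, and the representation property of $\rho$ are each applied to the correct group of terms and the double-$T$ terms are seen to cancel, so I would keep the terms organized by their number of $T$'s to make the cancellations transparent.

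Conceptually, the lemma reflects a matched-pair decomposition, and I would record this as the structural reason behind the computation. In the semidirect product $\g\ltimes_\rho V$, with bracket $[(x,u),(y,v)]=([x,y]_\g,\rho(x)v-\rho(y)u)$, the identity \eqref{eq:defiO} says precisely that the graph $\{(Tu,u)\mid u\in V\}$ is a Lie subalgebra isomorphic to $(V,[\cdot,\cdot]^T)$, while $\{(x,0)\mid x\in\g\}$ is a complementary subalgebra. Projecting $[(Tu,u),(x,0)]=([Tu,x]_\g,-\rho(x)u)$ onto the $\g$-factor along the graph returns exactly $\varrho(u)(x)=[Tu,x]_\g+T\rho(x)(u)$. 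Since any decomposition of a Lie algebra into two complementary subalgebras is a matched pair, in which the induced action of each factor on the other is automatically a representation, this identifies $\varrho$ as such an action and yields the lemma without further computation.
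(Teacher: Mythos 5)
Your proposal is correct. Note first that the paper itself gives no proof of this lemma: it is recalled from \cite{TBGS}, and the argument there is precisely the direct computation you sketch. I checked your bookkeeping and it goes through: expanding $\varrho(u)\varrho(v)(x)-\varrho(v)\varrho(u)(x)$ gives eight terms; the two iterated-bracket terms collapse to $[[Tu,Tv]_\g,x]_\g$ by Jacobi; applying \eqref{eq:defiO} to $[Tu,T\rho(x)(v)]_\g-[Tv,T\rho(x)(u)]_\g$ produces $T\rho(Tu)\rho(x)(v)-T\rho(Tv)\rho(x)(u)$ plus two terms $-T\rho(T\rho(x)(v))(u)+T\rho(T\rho(x)(u))(v)$ that cancel the remaining double-$T$ terms; and expanding $T\rho([Tv,x]_\g)(u)-T\rho([Tu,x]_\g)(v)$ via the representation property of $\rho$ kills the surviving $T\rho(T\cdot)\rho(x)$ terms and leaves $T\rho(x)\rho(Tu)(v)-T\rho(x)\rho(Tv)(u)$, which together with $T[u,v]^T=[Tu,Tv]_\g$ is exactly $\varrho([u,v]^T)(x)$.

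Your second, structural argument is a genuinely different route and, in my view, the better one. The graph $\{(Tu,u)\}$ being a subalgebra of $\g\ltimes_\rho V$ is exactly the content of \eqref{eq:defiO}, the vector space splitting $\g\ltimes_\rho V=\g\oplus\mathrm{Gr}(T)$ holds since $(x,u)=(x-Tu,0)+(Tu,u)$, and the projection of $[(Tu,u),(x,0)]=([Tu,x]_\g,-\rho(x)u)$ onto the $\g$-factor \emph{along the graph} is $([Tu,x]_\g+T\rho(x)(u),0)$, i.e. $\varrho(u)(x)$. The general fact you invoke (in a splitting of a Lie algebra into two complementary subalgebras, the projected adjoint action of one factor on the other is a representation) is standard and its proof is a two-line Jacobi computation in which the cross term dies under projection precisely because each factor is a subalgebra; so the argument is not circular, it simply relocates the computation to a reusable general lemma. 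What the direct computation buys is self-containedness and the explicit cancellation pattern; what the matched-pair argument buys is an explanation of \emph{why} the formula for $\varrho$ is forced, and it generalizes verbatim to other settings (e.g. the relative Rota--Baxter operator $r^\sharp$ with respect to $(\g^*;L^*)$ used later in the paper, where it identifies the representation of Proposition \ref{pro:KV-structure}). Either argument alone would suffice.
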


Let $\delta_{\rm RB}: \Hom(\wedge^kV,\g)\longrightarrow
\Hom(\wedge^{k+1}V,\g)$ be the corresponding Chevalley-Eilenberg
coboundary operator. More precisely, for all $f\in
\Hom(\wedge^kV,\g)$ and $u_1,\cdots,u_{k+1}\in V$, we have
\begin{eqnarray}
&& \delta_{\rm RB} f(u_1,\cdots,u_{k+1})\notag\\
&=&\sum_{i=1}^{k+1}(-1)^{i+1}[Tu_i,f(u_1,\cdots,\hat{u_i},\cdots, u_{k+1})]_\g+\sum_{i=1}^{k+1}(-1)^{i+1}T\rho(f(u_1,\cdots,\hat{u_i},\cdots, u_{k+1}))(u_i)\notag\\
&&+\sum_{1\le i<j\le k+1}(-1)^{i+j}f(\rho(Tu_i)(u_j)-\rho(Tu_j)(u_i),u_1,\cdots,\hat{u_i},\cdots,\hat{u_j},\cdots, u_{k+1}).\label{eq:odiff}
\end{eqnarray}

The coboundary operators $\delta_{\rm RB}$ can also be given in terms of the graded Lie algebra $(\huaC^*(V,\g),\Courant{\cdot,\cdot})$ by the following relation:
\begin{pro}\label{pro:danddT}{\rm(\cite{TBGS})}
 Let $T:V\longrightarrow\g$ be a relative Rota-Baxter operator on a Lie algebra $\g$ with respect to a representation $(V;\rho)$. Then we have
 $$
 \delta_{\rm RB} f=(-1)^k\Courant{T,f},\quad  f\in \Hom(\wedge^kV,\g).
 $$
\end{pro}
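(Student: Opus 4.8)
The plan is to verify the identity by a direct expansion of the right-hand side. I would substitute $P=T\in\Hom(\wedge^1 V,\g)$ (so $n=1$) and $Q=f\in\Hom(\wedge^k V,\g)$ (so $m=k$) into the definition \eqref{o-bracket} of $\Courant{\cdot,\cdot}$, producing exactly three sums, and then match them one by one against the three sums in the explicit formula \eqref{eq:odiff} for $\delta_{\rm RB}f$. The point that makes this tractable is that with $n=1$ every shuffle degenerates: the relevant sets are $\mathbb{S}_{(k,1,0)}$, $\mathbb{S}_{(1,1,k-1)}$ and $\mathbb{S}_{(1,k)}$, and the global prefactor $(-1)^{mn}$ becomes simply $(-1)^k$. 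Note also that \eqref{eq:odiff} is, by construction, the Chevalley--Eilenberg differential of the representation $\varrho$ of Lemma \ref{lem:rep}, since $\varrho(u)(x)=[Tu,x]_\g+T\rho(x)(u)$ reproduces its first two sums and $[u_i,u_j]^T=\rho(Tu_i)(u_j)-\rho(Tu_j)(u_i)$ its last; so no separate verification that $\delta_{\rm RB}$ is a differential is needed here.

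First I would handle the bracket term, the third sum of \eqref{o-bracket}, taken over $\mathbb{S}_{(1,k)}$: it singles out one argument $u_i$ to feed into $T$, with shuffle sign $(-1)^{i-1}$, so this sum equals $(-1)^k\sum_i(-1)^{i+1}[Tu_i,f(u_1,\ldots,\hat{u_i},\ldots,u_{k+1})]_\g$, precisely $(-1)^k$ times the first sum of \eqref{eq:odiff}. Next, the first sum of \eqref{o-bracket}, over $\mathbb{S}_{(k,1,0)}$, singles out the argument $u_i$ on which $\rho(f(\cdots))$ acts before $T$ is applied; its shuffle sign is $(-1)^{k+1-i}$, which I would rewrite as $(-1)^{k}(-1)^{i+1}$ to identify it with $(-1)^k$ times the $T\rho(f(\cdots))$-sum of \eqref{eq:odiff}.

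The main work, and the step I expect to be the most delicate, is the middle sum of \eqref{o-bracket}, over $\mathbb{S}_{(1,1,k-1)}$, because recovering the sub-adjacent bracket $[u_i,u_j]^T$ of \eqref{eq:bracketT} requires pairing two shuffles rather than reading off a single sign. For fixed $i<j$, the ordered choices $(\sigma(1),\sigma(2))=(i,j)$ and $(j,i)$ leave the same increasing tail $u_1,\ldots,\hat{u_i},\ldots,\hat{u_j},\ldots,u_{k+1}$, and I would compute their shuffle signs to be $(-1)^{i+j-1}$ and $(-1)^{i+j}$. Using linearity of $f$ in its first slot to combine the two contributions into $f$ evaluated at $\rho(Tu_i)(u_j)-\rho(Tu_j)(u_i)=[u_i,u_j]^T$, and folding in the external factor $-(-1)^k$, collapses this sum to $(-1)^k\sum_{i<j}(-1)^{i+j}f([u_i,u_j]^T,\ldots)$, matching the last sum of \eqref{eq:odiff}. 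Summing the three matched pieces gives $\Courant{T,f}=(-1)^k\delta_{\rm RB}f$. The only genuine obstacle throughout is sign bookkeeping in the degenerate shuffles; in particular, the relative Rota-Baxter condition $\Courant{T,T}=0$ is never invoked, as the identity is purely formal in $T$ and $f$.
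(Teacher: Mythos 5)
Your proof is correct, and all the sign computations check out: the shuffle signs $(-1)^{i-1}$ for $\mathbb{S}_{(1,k)}$, $(-1)^{k+1-i}$ for $\mathbb{S}_{(k,1,0)}$, and the pair $(-1)^{i+j-1}$, $(-1)^{i+j}$ for $(i,j)$ versus $(j,i)$ in $\mathbb{S}_{(1,1,k-1)}$, which combine with the prefactor $-(-1)^k$ to reassemble $f([u_i,u_j]^T,\ldots)$. The paper itself offers no proof of this proposition (it is quoted from \cite{TBGS}), and your degree-by-degree expansion of $\Courant{T,f}$ with $n=1$, $m=k$ is exactly the direct verification underlying the cited result; your closing observation that the identity is purely formal in $T$ — the Maurer--Cartan condition $\Courant{T,T}=0$ being needed only for $\delta_{\rm RB}\circ\delta_{\rm RB}=0$, not for the identity itself — is also correct.
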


\section{Endo pre-Lie algebras, Manin triples and endo symplectic Lie algebras}\label{sec:endoprelie}
In this section we introduce the notion of endo pre-Lie algebras and give the equivalent structures of pre-Lie bialgebras, matched pairs, Manin triples for endo pre-Lie algebras and phase spaces of Lie algebras.
\subsection{ Endo pre-Lie algebras and their representations}
\begin{defi}
An {\bf endo pre-Lie algebra} is a triple $(\g,\cdot_\g,\phi)$, or simply $(\g, \phi)$, where $(\g, \cdot_\g)$ is a pre-Lie algebra and $\phi:\g\rightarrow\g$ is a pre-Lie algebra endomorphism.
\end{defi}
As an analogy of a pre-Lie algebra representation, we have
\begin{defi}
A {\bf representation} of an endo pre-Lie algebra $(\g, \phi)$ is $(V; \rho, \mu, \alpha)$,  where $(V; \rho, \mu)$ is a representation of a pre-Lie algebra $\g$ and $\alpha\in \End(V)$ such that
\begin{eqnarray}
\label{representation condition 1}\alpha(\rho(x)(v))&=&\rho(\phi(x))(\alpha(v))\\
 \label{representation condition 2}\alpha(\mu(x)(v))&=&\mu(\phi(x))(\alpha(v)), \quad \forall~x\in \g, ~ v\in V.
\end{eqnarray}
 \end{defi}
Two representations $(V_1;\rho_1, \mu_1, \alpha_1)$ and $(V_2;\rho_2, \mu_2, \alpha_2)$ of an endo pre-Lie algebra $(\g, \phi)$ are called {\bf equvialent} if exists a linear isomorphism $\varphi:V_1\rightarrow V_2$ such that
\begin{eqnarray}
 \varphi(\rho_1(x)(v))=\rho_2(x)( \varphi(v)) ,  \varphi(\mu_1(x)(v))=\mu_2(x)( \varphi(v)), \quad \forall~x\in \g, ~ v\in V.
\end{eqnarray}
For vector spaces $ V_1$and $ V_2$, and linear maps $\phi_1: V_1\rightarrow V_1$ and $\phi_2: V_2\rightarrow V_2$,  we denote $\phi_1+\phi_2$ by the linear map
\begin{eqnarray}
\phi_{ V_1\oplus V_2}: V_1\oplus V_2\rightarrow V_1\oplus V_2,\quad \phi_{ V_1\oplus V_2}(v_1+v_2):=\phi_1(v_1)+\phi_2(v_2).
\end{eqnarray}
For a pre-Lie algebra $(\g, \cdot_\g)$ , a linear space $ V$, two linear maps $\rho, \mu:\g\rightarrow\gl(V)$, define a multiplication $\ast$ on $\g\oplus V$ by
\begin{eqnarray}
(x_1+v_1)\ast(x_2+v_2)=x_1{\cdot_\g}x_2+\rho(x_1)v_2+\mu(x_2)v_1.\quad \forall~x_1, x_2\in \g, ~ v_1, v_2\in V.
\end{eqnarray}
\begin{lem}{\rm(\cite{Left-symmetric bialgebras})}\label{lem:semi-direct}
$(\g\oplus V,\ast)$ is a pre-Lie algebra if and only if $(V;\rho, \mu)$ is a representation of a pre-Lie algebra $\g$.
\end{lem}
The pre-Lie algebra is called the {\bf semi-direct product} of $\g$ and $ V$ , denoted $\g\ltimes_{\rho, \mu} V$ or $\g\ltimes V$.
\begin{pro}
Let $(\g, \phi)$ be an endo pre-Lie algebra , $(V;\rho, \mu)$ is a representation of a pre-Lie algebra $\g$ and $\alpha\in \End(V)$ .Then $(\g\ltimes V, \phi+\alpha)$ is a endo pre-Lie algebra if and only if $(V;\rho, \mu, \alpha)$ is a representation of $(\g, \phi)$.
\end{pro}
\begin{proof}
Note that $(\g\ltimes V, \phi+\alpha)$ is a endo pre-Lie algebra if and only if $ \g\ltimes V$ is a pre-Lie algebra and $\phi+\alpha\in \End(\g\ltimes V)$ is a pre-Lie algebra homomorphism. By Lemma \ref{lem:semi-direct}, we only need to prove that $\phi+\alpha\in \End(\g\ltimes V)$ is a pre-Lie algebra homomorphism. In fact, we have
\begin{eqnarray*}
(\phi+\alpha)((x_1+v_1)\ast(x_2+v_2))&=&(\phi+\alpha)(x_1{\cdot_\g}x_2+\rho(x_1)v_2+\mu(x_2)v_1)\\
&=&\phi(x_1{\cdot_\g}x_2)+\alpha(\rho(x_1)v_2+\mu(x_2)v_1),\\
(\phi+\alpha)(x_1+v_1)\ast(\phi+\alpha)(x_2+v_2)&=&(\phi(x_1)+\alpha(v_1))\ast(\phi(x_2)+\alpha(v_2))\\
&=&\phi(x_1)\cdot_\g\phi(x_2)+\rho(\phi(x_1))\alpha(v_2)+\mu(\phi(x_2))\alpha(v_1),
\end{eqnarray*}
which implies that $\phi+\alpha\in \End(\g\ltimes V)$ is a pre-Lie algebra homomorphism .
\end{proof}

\begin{lem}\label{lem:dual representation}
Let $(\g, \phi)$ be an endo pre-Lie algebra. Let $(V;\rho, \mu)$ be a representation of $\g$. For $\beta\in \End( V)$, $( V^\ast; \rho^\ast-\mu^\ast, -\mu^\ast, \beta^\ast)$ is a representation of  $(\g,\phi)$ if and only if
\begin{eqnarray}
\beta((\rho-\mu)(\phi(x))(v))=(\rho-\mu)(x)(\beta(v)),\quad
\beta(\mu(\phi(x))(v))=\mu(x)(\beta(v)),\quad x\in \g, ~ v\in V.
\end{eqnarray}
\end{lem}
\begin{proof}
It follows by a direct calculation.
\end{proof}
\begin{defi}
Let $(\g, \phi)$ be an endo pre-Lie algebra, $(V;\rho, \mu)$ is a representation of a pre-Lie algebra $\g$ and $\beta\in \End(V)$. We say that $\beta$ {\bf dually represents endo pre-Lie algebra} $(\g, \phi)$ if $( V^\ast; \rho^\ast-\mu^\ast, -\mu^\ast, \beta^\ast)$ is a representation of  $(\g, \phi)$.
\end{defi}

\begin{ex}
Let $(\g, \phi)$ be an endo pre-Lie algebra. Then $(\g;L,R,\phi)$ is naturally a representation of the endo pre-Lie algebra $(\g, \phi)$, called the {\rm regular representation}. Furthermore, a linear operator $\psi$ on $\g$ dually represents $(\g, \phi)$ if and only if for  any $x,y\in\g$,  any two of the following  three equations hold:
\begin{eqnarray}
\psi([\phi(x),  y]_\g)=[x, \psi(y)]_\g,\quad \psi(\phi(x)\cdot_\g  y)=x\cdot_\g \psi(y), \quad \psi(x\cdot_\g \phi(y))=\psi(x)\cdot_\g y.
\end{eqnarray}
\end{ex}

By Lemma \ref{lem:dual representation}, we have
\begin{cor}
Let $(\g, \phi)$ be an endo pre-Lie algebra, $(V;\rho, \mu)$ is a representation of a pre-Lie algebra $\g$ and $\beta\in \End(V)$. If $\beta$ dually represents endo pre-Lie algebra $(\g, \phi)$, then we have the semi-direct product endo pre-Lie algebra $(\g\ltimes V^\ast, \phi+\beta^\ast)$.
\end{cor}
\subsection{Matched pairs of endo pre-Lie algebras}
\begin{defi}{\rm (\cite{Left-symmetric bialgebras})}
{\bf A matched pair of pre-Lie algebra} is $(\g, \h, \rho_\g,\mu_\g, \rho_\h, \mu_\h)$, where $(\g, \cdot_\g)$ and $(\h, \cdot_\h)$ are pre-Lie algebras, $(\h;\rho_\g, \mu_\g)$ is a representation of the pre-Lie algebra $\g$ and $(\g;\rho_\h, \mu_\h)$ is a representation of the pre-Lie algebra $\h$, such that for $x, y\in\g, a, b\in\h$, the following equalities hold:
\begin{eqnarray*}
\label{representation condition 1}\mu_\g(x)[a, b]_\h&=&\mu_\g(\rho_\h(b)x)a-\mu_\g(\rho_\h(a)x)b+a\cdot_\h(\mu_\g(x)b)-b\cdot_\h(\mu_\g(x)a),\\
\label{representation condition 2}\rho_\g(x)(a\cdot_\h b)&=&-\rho_g(\rho_\h(a)x-\mu_\h(a)x)b+(\rho_\g(x)a-\mu_\g(x))\cdot_\h b+\mu_\g(\mu_\h(b)x)a+a\cdot_\h(\rho_\g(x)b),\\
\label{representation condition 3}\mu_\h(a)[x, y]_\g&=&\mu_\h(\rho_\g(y)a)x-\mu_\h(\rho_\g(x)a)y+x\cdot_\g(\mu_\h(a)y)-y\cdot_\g(\mu_\h(a)x),\\
\label{representation condition 4}\rho_\h(a)(x\cdot_\g y)&=&-\rho_\h(\rho_\g(x)a-\mu_\g(x)a)y+(\rho_\h(a)x-\mu_\h(a)x)\cdot_\g y+\mu_\h(\mu_\g(y)a)x+x\cdot_\g(\rho_\h(a)y).
\end{eqnarray*}
\end{defi}

\begin{thm}{\rm (\cite{Left-symmetric bialgebras})}\label{thm:matched pairs of pre-Lie}
Let $(\g, \cdot_\g)$ and $(\h, \cdot_\h)$ be pre-Lie algebras, $(\h;\rho_\g, \mu_\g)$ be a representation of the pre-Lie algebra $\g$ and $(\g;\rho_\h, \mu_\h)$ be a representation of the pre-Lie algebra $\h$. Define a multiplication on the direct sum $\g\oplus\h$ by
\begin{eqnarray}
\nonumber (x+a)\ast(y+b)&=&x\cdot_\g y+\rho_\h(a)y+\mu_\h(b)x\\
&&+a\cdot_\h b+\rho_\g(x)b+\mu_\g(y)a ,\quad \forall x, y\in \g, a, b\in \h.
\end{eqnarray}
Then $(\g\oplus\h, \ast)$ is a pre-Lie algebra if and only if $(\g, \h, \rho_\g, \mu_\g, \rho_\h, \mu_\h)$ is a matched pair of pre-Lie algebras of $(\g, \cdot_\g)$ and $(\h, \cdot_\h)$. We denote the resulting pre-Lie algebra $(\g\oplus\h, \ast)$ by $\g\bowtie\h$.
\end{thm}

\begin{defi}\label{def:matched pair of endo pre-Lie algebras}
A {\bf matched pair of endo pre-Lie algebras} is $((\g, \phi_\g), (\h, \phi_\h), \rho_\g, \mu_\g, \rho_\h, \mu_\h)$, where $(\g, \phi_\g), (\h, \phi_\h)$ are endo pre-Lie algebras. $\rho_\g, \mu_\g:\g\rightarrow \gl(\h), \rho_\h, \mu_\h:\h\rightarrow \gl(\g)$ are linear maps, such that
\begin{itemize}
\item[\rm(i)]$(\g;\rho_\h, \mu_\h, \phi_\g)$ is a representation of endo pre-Lie algebra $(\h, \phi_\h), $
\item[\rm(ii)] $(\h; \rho_\g, \mu_\g, \phi_\h)$ is a representation of endo pre-Lie algebra $(\g, \phi_\g), $
\item[\rm(iii)]$(\g, \h, \rho_\g, \mu_\g, \rho_\h, \mu_\h)$ is a matched pair of pre-Lie algebras.
\end{itemize}
\end{defi}

The endo pre-Lie algebras have the following characterization.
\begin{thm}\label{thm:endo pre-Lie algebra}
Let  $(\g, \phi_\g)$ and $(\h, \phi_\h)$ be endo pre-Lie algebras and $(\g, \h, \rho_\g, \mu_\g, \rho_\h, \mu_\h)$ be a matched pair of pre-Lie algebra of $(\g, \cdot_\g)$ and $(\h, \cdot_\h)$. Then $(\g\bowtie \h, \phi_\g+\phi_\h)$ is an endo pre-Lie algebra if and only if $((\g, \phi_\g), (\h, \phi_\h), \rho_\g, \mu_\g, \rho_\h, \mu_\h)$ is a matched pair of endo pre-Lie algebras $(\g, \phi_\g)$ and $(\h, \phi_\h)$.
\end{thm}
\begin{proof}
By Theorem \ref{thm:matched pairs of pre-Lie}, $\g\bowtie h$ is a pre-Lie algebra if and only if $(\g, \h, \rho_\g, \mu_\g, \rho_\h, \mu_\h)$ is a matched pair of pre-Lie algebras of $(\g, \cdot_\g)$ and $(\h, \cdot_\h)$. Therefore, in order
to prove that $(\g\bowtie \h, \phi_\g+\phi_\h)$ is a endo pre-Lie algebra, we just need to  prove $\phi_\g+\phi_\h$ is a pre-Lie algebra homomorphism. By a direct calculation, we have
\begin{eqnarray*}
(\phi_\g+\phi_\h)((x+a)\ast(y+b))&=&(\phi_\g+\phi_\h)(x\cdot_\g y+\rho_\h(a)y+\mu_\h(b)x+a\cdot_\h b+\rho_\g(x)b+\mu_\g(y)a)\\
&=&\phi_\g(x\cdot_\g y+\rho_\h(a)y+\mu_\h(b)x)+\phi_\h(a\cdot_\h b+\rho_\g(x)b+\mu_\g(y)a),\\
(\phi_\g+\phi_\h)(x+a)\ast(\phi_\g+\phi_\h)(y+b)&=&(\phi_\g(x)+\phi_\h(a))\ast(\phi_\g(y)+\phi_\g(x))\\
&=&\phi_\g(x)\cdot_\g\phi_\g(y)+\rho_\h(\phi_\h(a))\phi_\g(y)+\mu_\h(\phi_\h(b))\phi_\g(x)\\
&&+\phi_\h(a)\cdot_\h\phi_\h(b)+\rho_\g(\phi_\g(x))\phi_\h(b)+\mu_\g(\phi_\g(y))\phi_\h(a).
\end{eqnarray*}
If $\phi_\g+\phi_\h$ is a pre-Lie algebra homomorphism, letting $x=b=0$, we have
\begin{eqnarray*}
\phi_\g(\rho_\h(a)y)+\phi_\h(\mu_\g(y)a)=\rho_\h(\phi_\h(a))\phi_\g(y)+\mu_\g(\phi_\g(y))\phi_\h(a),
\end{eqnarray*}
which implies that
$$\phi_\g(\rho_\h(a)y)=\rho_\h(\phi_\h(a))\phi_\g(y),\quad \phi_\h(\mu_\g(y)a)=\mu_\g(\phi_\g(y))\phi_\h(a).$$
Similarly, for $y=a=0$, we have
\begin{eqnarray*}
\phi_\g(\mu_\h(b)x)=\mu_\h(\phi_\h(b))\phi_\g(x),\quad \phi_\h(\rho_\g(x)b)=\rho_\g(\phi_\g(x))\phi_\h(b).
\end{eqnarray*}
Thus, $(\g; \rho_\h, \mu_\h, \phi_\g)$ is a representation of the endo pre-Lie algebra $(\h, \phi_\h)$ and $(\h; \rho_\g, \mu_\g, \phi_\h)$ is a representation of the endo pre-Lie algebra $(\g, \phi_\g)$. The converse can be proved similarly.
\end{proof}
\subsection{ Endo symplectic Lie algebras and Manin triples of endo pre-Lie algebras}

A nondegenerate  skew-symmetric bilinear form
$(\cdot,\cdot)_-$ on a pre-Lie algebra $(\g,\cdot_\g)$
is called {\bf invariant} if
\begin{equation}
	(x\cdot_\g y,z)_-+(y,[x,z]_\g)_-=0,\quad\forall~x,y,z\in\g.
\end{equation}

\begin{defi}
	A {\bf quadratic pre-Lie algebra} is a triple $(\g,\cdot_\g,(\cdot,\cdot)_-)$, where $(\g,\cdot_\g)$ is a pre-Lie algebra and $(\cdot,\cdot)_-$ is a nondegenerate invariant skew-symmetric bilinear form on $\g$.
\end{defi}

\begin{ex}{\rm
		Let $(\g,\cdot_\g)$ be a pre-Lie algebra. Then $(\frkd=\g\ltimes_{\ad^*,-R^*} \g^*,(\cdot,\cdot)_-)$ is a quadratic pre-Lie algebra, where the nondegenerate invariant skew-symmetric bilinear form $(\cdot,\cdot)_-$ is given by
		\begin{eqnarray}\label{symplectic bracket}
			(x+a,y+b)_-=\langle a,y\rangle-\langle x,b\rangle, \quad \forall ~x,y\in \g,~~a,b\in\g^*.
	\end{eqnarray}}
\end{ex}

Recall that a nondegenerate skew-symmetric bilinear form $\omega\in\wedge^2\g^*$ is called a {\bf symplectic structure} on a Lie algebra $(\g,[\cdot,\cdot]_\g )$ if
$$
\omega([x,y]_\g,z)+\omega([y,z]_\g,x)+\omega([z,x]_\g,y)=0,\quad  \forall x,y,z \in \g.
$$
A symplectic Lie algebra is a Lie algebra equipped with a symplectic structure.

There is a one-to-one correspondence between quadratic pre-Lie algebras and symplectic Lie algebras.
\begin{thm}{\rm(\cite{Chu})}\label{lem:sym-pre}
	Let $(\g,  \cdot_\g,\omega)$ be a quadratic pre-Lie algebra. Then $(\g,[\cdot,\cdot]_\g,\omega)$ is a symplectic Lie algebra. Conversely, let $(\g,[\cdot,\cdot]_\g,\omega)$ be a symplectic Lie algebra. Then there exists a compatible pre-Lie algebra structure $\cdot_\g$ on $\g$ given by
	\begin{equation}\label{eq:pre-Lie-symplectic}
		\omega(x\cdot_\g y,z)=-\omega(y,[x,z]_\g),\quad~\forall x,y,z\in\g,
	\end{equation}
	whose sub-adjacent Lie algebra is exactly $(\g,[\cdot,\cdot]_\g)$. Moreover, $(\g,\cdot_\g,\omega)$ is a quadratic pre-Lie algebra.
\end{thm}

\emptycomment{A {\bf weak endomorphism} $\phi:\g\rightarrow \g$ on a symplectic Lie algebra $(\g,\omega)$ is a Lie algebra endomorphism $\phi$ on $\g$. Furthermore, a weak endomorphism $\phi$ on a symplectic Lie algebra $(\g,\omega)$ is called an {\bf endomorphism} if $\phi$ satisfies
\begin{equation}
	\omega(x,y)=\omega(\phi(x),\phi(y)),\quad x,y\in\g.
\end{equation}
A symplectic Lie algebra $(\g,\omega)$ with an (weak) endomorphism $\phi:\g\rightarrow \g$ is called an {\bf (weak) endo symplectic Lie algebra}. We denote an (weak) endo symplectic Lie algebra by $(\g,\omega,\phi)$.

Let $\phi:\g\rightarrow \g$ be a weak endomorphism on a symplectic Lie algebra $(\g,\omega)$. It is not hard to check that $\phi:\g\rightarrow \g$ is a pre-Lie algebra homomorphism, where the pre-Lie algebra structure $\cdot_\g$ on $\g$ is given by \eqref{eq:pre-Lie-symplectic}.
\begin{pro}
	There is a one-to-one correspondence between quadratic endo pre-Lie algebras and  weak symplectic Lie algebras.
\end{pro}}


\begin{defi}
	A   {\bf Manin triple of pre-Lie algebras}
	is a triple $((\frkd,\cdot_\frkd, (\cdot,\cdot)_-),\g_1,\g_2)$, where $(\frkd,\cdot_\frkd,(\cdot,\cdot)_-)$ is an even dimensional quadratic pre-Lie algebra, $\g_1$ and $\g_2$ are pre-Lie subalgebras, both isotropic with respect to $(\cdot,\cdot)_-$, and $\frkd=\g_1\oplus\g_2$ as vector spaces.
\end{defi}

\begin{defi}{\rm(\cite{Ku94})}
	Let $\g$ be a Lie algebra.  If there exists a Lie algebra structure on a direct sum of the underlying vector space of $\g$ and $\g^*$ such that $\g$ and $\g^*$ are Lie subalgebras and the natural skew-symmetric bilinear form on $\g\oplus \g^*$
	\begin{equation}
		\omega_p(x+a,y+b)=\langle a,y\rangle-\langle x,b\rangle,\quad\forall~x,y\in\g,a,b\in\g^*
	\end{equation}
	is a $2$-cocycle, then it is called a {\bf phase space} of the Lie algebra $\g$.
\end{defi}

\begin{thm}{\rm (\cite{Left-symmetric bialgebras})}\label{thm:Manin-Mathched pair}
Let $(\g, \cdot_\g)$ be a pre-Lie algebra. Suppose that there is a pre-Lie algebra structure $\cdot_{\g^\ast}$ on its dual space $\g^\ast$. Then the following statements are equivalent:
\begin{itemize}
  \item[{\rm(i)}] $(\frkd=\g\oplus \g^\ast, \g, \g^\ast,(\cdot,\cdot)_-=\omega_p)$ is a Manin triple;
  \item[{\rm(ii)}] $(\g, \g^\ast, \ad^\ast, -R^\ast, \AD^\ast, -\frkR^\ast)$ is a matched pair of pre-Lie algebras;
  \item[{\rm(iii)}]$\omega_p$ is a phase space of the sub-adjacent Lie algebra $\g^c$, where 
       the Lie bracket $[\cdot,\cdot]_p$ on $\g\oplus \g^*$ is defined by
    \begin{eqnarray}\label{eq:phase bracket}
    [x+a,y+b]_p=[a,b]_{\g^*}+L^*_xb-L^*_ya+\frkL^*_a y-\frkL^*_b x+[x,y]_\g,\quad \forall~x,y\in\g,a,b\in\g^*.
    \end{eqnarray}
       Furthermore, every phase space of a Lie algebra can be obtained in this way.
  \end{itemize}
\end{thm}

\begin{defi}
 A {\bf quadratic  endo  pre-Lie algebra} is $(\frkd, \phi, (\cdot,\cdot)_-)$, where $(\frkd, \phi)$ is an endo pre-Lie algebra and $(\frkd, (\cdot,\cdot)_-)$ is a quadratic pre-Lie algebra.
\end{defi}

Let $(\g,\omega)$ be a symplectic Lie algebra and $\phi:\g\rightarrow\g $ a linear map. Let $\widehat{\phi} :\g\rightarrow\g$ denote the {\bf adjoint linear transformation} of $\phi$ with respective to $\omega$ by
\begin{eqnarray}
\omega(\phi(x), y)=\omega(x, \widehat{\phi}(y)),\quad \forall~ x, y\in \g.
\end{eqnarray}
\begin{defi}
Let $(\g,\omega)$ be a symplectic Lie algebra and $\phi:\g\rightarrow\g $ a linear map. If the { adjoint linear transformation} $\widehat{\phi} $ of $\phi$ satisfies
\begin{equation}\label{eq:endo sym1}
\widehat{\phi}([\phi(x), y]_\g)=[x, \widehat{\phi}(y)]_\g,\quad \forall~x,y\in\g,
\end{equation}
then $(\g,\phi,\omega)$ is called an {\bf endo symplectic Lie algebra}.
\end{defi}

\begin{pro}\label{pro:quadratic endo endo symp}
There is a one-to-one correspondence between quadratic endo pre-Lie algebras and  endo symplectic Lie algebras.
\end{pro}
\begin{proof}
For $x,y,z\in\g$, we have
\begin{eqnarray*}
\omega(\phi(x\cdot_\g y)-\phi(x)\cdot_\g\phi(y), z)&=&\omega(x\cdot_\g y, \widehat{\phi}(z))+\omega(\phi(y), [\phi(x), z]_\g)\\
&=&\omega(y, \widehat{\phi}[\phi(x), z]_\g-[x, \widehat{\phi}(z)]_\g),
\end{eqnarray*}
which implies that $\phi$ is a pre-Lie algebra homomorphism if and only if \eqref{eq:endo sym1} holds.

By Theorem \ref{lem:sym-pre}, there is a one-to-one correspondence between quadratic pre-Lie algebras and symplectic Lie algebras. The conclusion follows.
\end{proof}

Let $(\frkd, \phi, (\cdot,\cdot)_-)$ be a { quadratic  endo  pre-Lie algebra}. We also let $\widehat{\phi} :\g\rightarrow\g$ denote the { adjoint linear transformation} of $\phi$ under the nondegenerate skew-symmetric  bilinear form $(\cdot,\cdot)_-$.
\begin{eqnarray}
(\phi(x), y)_-=(x, \widehat{\phi}(y))_-,\quad \forall x, y\in \g.
\end{eqnarray}
\begin{pro}
Let $(\frkd, \phi, (\cdot,\cdot)_-)$ be a quadratic  endo  pre-Lie algebra and  $\widehat{\phi}$ be the adjoint of $\phi$. Then $\widehat{\phi}$ dually represents $(\frkd, \phi)$, that is, $(\g^*;\ad^*,-R^*,\widehat{\phi}^*)$ is a representation of the endo pre-Lie algebra $(\frkd, \phi)$.
\end{pro}
\begin{proof}
	For $x,y,z\in\frkd$, we have
\begin{eqnarray*}
0 &=&(\phi(x\cdot_\frkd y), z)_--(\phi(x)\cdot_\frkd\phi(y), z)_-\\
&=&(x\cdot_\frkd y, \widehat{\phi}(z))_-+(\phi(y), [\phi(x), z]_\frkd)_-\\
&=&-(y, [x, \widehat{\phi}(z)]_\frkd )_-+(y, \widehat{\phi}([\phi(x), z]_\frkd))_-,
\end{eqnarray*}
which implies that
\begin{equation*}\label{eq:bracket rep}
[x, \widehat{\phi}(z)]_\frkd=\widehat{\phi}([\phi(x), z]_\frkd).
\end{equation*}

Similarly, by fact $(\phi([x, y]_\frkd, z)_--([\phi(x),\phi(y)]_\frkd, z)_-=0$, we have
\begin{equation*}
\widehat{\phi}(z)\cdot_\frkd x=\widehat{\phi}(z\cdot_\frkd \phi(x)).
\end{equation*}
Thus $\widehat{\phi}$ dually represents $(\frkd, \phi)$.
\end{proof}

\begin{defi}
Let $(\g, \phi)$ be an endo pre-Lie algebra. Suppose that $(\g^\ast, \psi^\ast)$ is also an endo pre-Lie algebra. {\bf A Manin triple of endo pre-Lie algebras} is  $((\frkd=\g\oplus \g^\ast, \phi+\psi^\ast,(\cdot,\cdot)_-), (\g, \phi), (\g^\ast, \psi^\ast))$ associated to $(\g, \phi)$ and $(\g^\ast, \psi^\ast)$  such that $(\frkd, \phi+\psi^\ast,  (\cdot,\cdot)_-) $ is a quadratic endo pre-Lie algebra and $(\frkd, \g, \g^\ast)$ is a Manin triple of pre-Lie algebras associated to $\g$ and $\g^\ast$.
\end{defi}
\begin{lem}
Let $(\g\bowtie \g^\ast, \phi+\psi^\ast, (\cdot,\cdot)_-) $ be a quadratic endo pre-Lie algebra. Then we have
\begin{itemize}
\item[\rm(a)]The adjoint $ \widehat{\phi+ \psi^\ast}$ of $\phi+ \psi^\ast$ with respect to $(\cdot,\cdot)_-$ is $\psi+\phi^\ast$. Furthermore, $\psi+\phi^\ast$ dually represent the endo pre-Lie algebra $(\g\bowtie \g^\ast, \phi+\psi^\ast)$.
\item[\rm(b)] $\psi$ dually represents the endo pre-Lie algebra $(\g, \phi)$.
\item[\rm(c)]$\phi^\ast$ dually represents the endo pre-Lie algebra $(\g^\ast, \psi^\ast)$.
\end{itemize}
\end{lem}
\begin{proof}
For (a), we have
\begin{eqnarray*}
((\phi+ \psi^\ast)(x+a), y+b)_-&=&(\phi(x)+ \psi^\ast(a), y+b)_-\\
&=&\langle \psi^\ast(a), y\rangle-\langle\phi(x), b\rangle\\
&=&\langle a, \psi(y)\rangle -\langle x, \phi^\ast(b)\rangle\\
&=&(a+x, (\psi+\phi^\ast)(y+b))_-,
\end{eqnarray*}
which implies that the adjoint $ \widehat{\phi+ \psi^\ast}$ of $\phi+ \psi^\ast$ with respect to $(\cdot,\cdot)_-$ is $\psi+\phi^\ast$.

For (b), due to (a), we have
\begin{eqnarray*}
( \psi+\phi^\ast)([(\phi+ \psi^\ast)(x+a),(y+b)]_\frkd)&=&[x+a,( \psi+\phi^\ast)(y+b)]_\frkd,\\
( \psi+\phi^\ast)((\phi+ \psi^\ast)(x+a)\ast(y+b))&=&(x+a)\ast( \psi+\phi^\ast)(y+b).
\end{eqnarray*}
Letting  $a=b=0$, have
\begin{eqnarray*}
\psi([\phi(x),y]_\g)=[x,\psi(y)]_\g,\quad \psi(\phi(x)\cdot_\g y)=x\cdot_\g \psi(y),
\end{eqnarray*}
which implies that $\psi$ dually represents the endo pre-Lie algebra $(\g, \phi)$.

For (c), letting $x=y=0$,  we have
\begin{eqnarray*}
\phi^\ast( [\psi^\ast(a),b]_{\g^*})=[a,\phi^\ast(b)]_{\g^*},\quad \phi^\ast( \psi^\ast(a)\cdot_{\g^\ast}b)=a\cdot_{\g^\ast}\phi^\ast(b),
\end{eqnarray*}
which implies that $\phi^\ast$ dually represents the endo pre-Lie algebra $(\g^\ast, \psi^\ast)$.
\end{proof}

\begin{defi}
Let $(\g,\omega_\g)$ be the phase space of the Lie algebra $\g$. Let $\phi:\g\rightarrow\g $ and $\psi:\g\rightarrow\g $ be two  linear maps. If $\phi$ and $\psi$ satisfy
\begin{equation}\label{eq:endo sym2}
(\psi+\phi^*)([(\phi+\psi^*)(x+a), y+b]_p)=[x+a, (\psi+\phi^*)(y+b)]_p,\quad \forall~x,y\in\g,a,b\in\g^*,
\end{equation}
then $(\omega_p,\phi,\psi)$ is called an {\bf endo phase space} of the Lie algebra $\g$.
\end{defi}

\begin{thm}\label{thm:the equivalence of matched pairs and manin triples}
Let $(\g, \phi)$ be an endo pre-Lie algebra. Suppose that there is an endo pre-Lie algebra structure $(\g^\ast,  \psi^\ast)$ on its dual space $\g^\ast$. Then the following statements are equivalent:
\begin{itemize}
  \item[{\rm(i)}] $((\frkd=\g\oplus \g^\ast, \phi+\psi^\ast,(\cdot,\cdot)_-), (\g, \phi), (\g^\ast, \psi^\ast))$ is a Manin triple of endo pre-Lie algebras;
  \item[{\rm(ii)}] $((\g, \phi), (\g^\ast, \psi^\ast), \ad^\ast, -R^\ast, \AD^\ast, -\frkR^\ast))$ is a matched pair of endo pre-Lie algebras;
  \item[{\rm(iii)}]$(\omega_p,\phi,\psi)$ is the endo phase space of the sub-adjacent Lie algebra $\g^c$.
  \end{itemize}
\end{thm}
\begin{proof}
We first show that (i) is equivalent to (ii).

Note that $((\frkd, \phi+\psi^\ast,(\cdot,\cdot)_-), (\g, \phi),(\g^\ast, \psi^\ast))$ is a Manin pair of endo pre-Lie algebras if and only if
\begin{itemize}
\item[\rm(1)] $(\frkd, \phi+\psi^\ast, (\cdot,\cdot)_-)$ is a quadratic endo  pre-Lie algebra,
\item[\rm(2)]  $(\frkd, \g,  \g^\ast)$ is  a Manin triple of pre-Lie algebras associated to $\g$ and $\g^\ast$.
\end{itemize}
Since $(\cdot,\cdot)_-$ is invariant, the pre-Lie algebra structure on $\frkd$ is given by
\begin{eqnarray*}
(x+a)\ast(y+b)=x\cdot_\g y+\ad_x^\ast b-R^*_ya+a\cdot_{\g^\ast}b+\AD^\ast_ay-\frkR^*_b x,\quad x, y\in \g, ~a, b \in \g^\ast.
\end{eqnarray*}
So by Theorem \ref{thm:Manin-Mathched pair}, we only need to prove that $\phi+ \psi^\ast$ is a homomorphism of $(\frkd,\ast)$. By a direct calculation, we have
\begin{eqnarray*}
(\phi+ \psi^\ast)(x+a)\ast(\phi+ \psi^\ast)(y+b)&=&(\phi(x)+ \psi^\ast(a))\ast(\phi(y)+ \psi^\ast(b))\\
&=&\phi(x)\cdot_\g\phi(y)+\ad^\ast_{\phi(x)}( \psi^\ast(b))-R^\ast_{\phi(y)}( \psi^\ast(a))\\
&&+ \psi^\ast(a)\cdot_{\g^\ast} \psi^\ast(b)+\AD_{\psi^\ast(a)}^\ast\phi(y)-\frkR_{\psi^\ast(b)}^\ast\phi(x),\\
(\phi+\psi^\ast)((x+a)\ast(y+b))&=&\phi(x\cdot_\g y)+\phi(\AD_a^\ast y)-\phi(\frkR_b^* x)\\
&&+\psi^\ast(a\cdot_{\g^\ast } b)+\psi^\ast(\ad_x^\ast b)- \psi^\ast(R^*_y a).
\end{eqnarray*}
Therefore, $\phi+ \psi^\ast$ is a homomorphism of $(\frkd, \ast)$ if and only if $(\g^\ast;\ad^\ast, R^\ast, \psi^\ast)$ is a representation of the endo pre-Lie algebra $(\g, \phi)$ and  $(\g; \AD^\ast, \frkR^\ast, \phi)$ is a representation of the endo pre-Lie algebra $(\g^\ast, \psi^\ast)$.

Next, we show that (i) is equivalent to (iii). Since the adjoint $ \widehat{\phi+ \psi^\ast}$ of $\phi+ \psi^\ast$ with respect to $\omega_p$ is $\psi+\phi^\ast$, \eqref{eq:endo sym2} is just \eqref{eq:endo sym1}. Furthermore, by 
Proposition \ref{pro:quadratic endo endo symp}, $(\frkd, \phi+\psi^\ast, \omega_p)$ is a quadratic endo  pre-Lie algebra. The conclusion follows.

\end{proof}

\subsection{Endo pre-Lie bialgebras}
\begin{defi}
A linear space  $\g$ with a linear map $ \alpha:\g\rightarrow\g\otimes\g$ is called a {\bf pre-Lie coalgebra} if for any x in $\g$, we have
\begin{eqnarray}
({\rm id}\otimes\alpha)\alpha(x)-(\tau\otimes {\rm id})({\rm id}\otimes\alpha)\alpha(x)=(\alpha\otimes {\rm id})\alpha(x)-(\tau\otimes {\rm id})(\alpha\otimes {\rm id})\alpha(x),
\end{eqnarray}
where $\tau:\g\otimes\g\rightarrow\g\otimes\g$ is flip map.
\end{defi}

For a Lie algebra $\g$ and a representation $(V;\rho)$ of $\g$, recall that a 1-cocycle associated to $\alpha:\g\rightarrow V$ is a linear map from $\g$ to $V$ satisfying
\begin{eqnarray*}
\alpha([x,y]_\g)=\rho(x)\alpha(y)-\rho(y)\alpha(x),\quad\forall x,y\in\g.
\end{eqnarray*}

\begin{defi}{\rm (\cite{Left-symmetric bialgebras})}
Let $\g$ be a vector space. A {\bf pre-Lie bialgebra} structure on $\g$ is a pair of linear maps $(\alpha, \beta)$ such that $\alpha:\g\rightarrow\g\otimes\g$, $\beta:\g^\ast\rightarrow\g^\ast\otimes\g^\ast$, and for any $ x, y\in \g, ~a, b\in \g^\ast$ , we have
\begin{itemize}
\item[\rm(i)] $\alpha^\ast:\g^\ast\otimes\g^\ast\rightarrow\g^\ast$ is a pre-Lie algebra structure on $\g^\ast$,
\item[\rm(ii)]  $\beta^\ast:\g\otimes\g\rightarrow\g$ is a pre-Lie algebra structure on $\g$,
\item[\rm(iii)]$\alpha$ is a $1$-cocycle of $\g$ associated to $ L\otimes1+1\otimes\ad$ with values in $\g\otimes\g$,
\item[\rm(iv)]$\beta$ is a $1$-cocycle of $\g^\ast$ associated to $\frkL\otimes1+1\otimes\AD$ with values in $\g^\ast\otimes\g^\ast$.
\end{itemize}
We denote pre-Lie bialgebras by $(\g, \g^\ast, \alpha, \beta)$, or simply $(\g, \g^\ast)$.
\end{defi}

\begin{thm}{\rm (\cite{Left-symmetric bialgebras})}\label{thm:Manin-Mathched-bialgebra pair}
	Let $(\g, \cdot_\g)$ be a pre-Lie algebra. Suppose that there is a pre-Lie algebra structure $\cdot_{\g^\ast}$ on its dual space $\g^\ast$ . Then the following statements are equivalent:
	\begin{itemize}
		\item[{\rm(i)}]$(\frkd=\g\oplus \g^\ast, \g, \g^\ast,(\cdot,\cdot)_-)$ is a Manin triple, where $(\cdot,\cdot)_-$ is given by \eqref{symplectic bracket};
		\item[{\rm(ii)}]$(\g, \g^\ast, \ad^\ast, -R^\ast, \AD^\ast, -\frkR^\ast)$ is a matched pair of pre-Lie algebras;
		\item[{\rm(iii)}]$(\g, \g^\ast)$ is a pre-Lie bialgebra.
	\end{itemize}
\end{thm}

\begin{defi}
An {\bf endo pre-Lie coalgebra} is a  pre-Lie coalgebra $(\g, \alpha)$ with a pre-Lie coalgebra endomorphism $\psi:\g\rightarrow\g$, that is,
\begin{eqnarray}\label{eq:pre-Lie coalgebra homomorphism}
( \psi\otimes \psi)\alpha=\alpha \psi.
\end{eqnarray}
Under finite-dimension condition, \eqref{eq:pre-Lie coalgebra homomorphism} is equivalent to the condition that  $ \psi^\ast:\g^\ast\rightarrow\g^\ast$ is an endomorphism of the pre-Lie algebra  $\g^\ast$.
\end{defi}

\begin{defi}\label{defi:endo pre-Lie bialgebra}
An {\bf endo pre-Lie bialgebra} is $((\g, \cdot_\g),(\g^\ast,\cdot_{\g^\ast}), \alpha, \beta, \phi,  \psi)$, or simply $((\g,\phi),(\g^\ast,\psi),\alpha,\beta)$ in which
\begin{itemize}
\item[\rm(i)] $(\g, \g^\ast, \alpha, \beta)$ is a pre-Lie bialgebra,
\item[\rm(ii)]  $(\g, \phi)$ is  an endo pre-Lie algebra,
\item[\rm(iii)] $(\g, \alpha,  \psi)$ is an endo pre-Lie coalgebra,
\item[\rm(iv)] the the following compatibility conditions holds:
\begin{eqnarray}
	({\rm id}\otimes\phi)\circ \alpha&=&( \psi\otimes {\rm id})\circ \alpha\circ \phi,\label{eq:compatibility conditions 1}\\
	(\phi \otimes {\rm id})\circ \alpha&=&( {\rm id}  \otimes \psi)\circ \alpha\circ \phi,\label{eq:compatibility conditions 12}\\
	\psi(x\cdot_\g \phi(y))&=&\psi(x) \cdot_\g y ,\label{eq:compatibility conditions 2}\\
	\psi(\phi(x)\cdot_\g  y)&=&x\cdot_\g \psi(y),\label{eq:compatibility conditions 3}~\forall x, y\in \g.
\end{eqnarray}
\end{itemize}
\end{defi}

\begin{thm}\label{thm:equvialent}
	Let $(\g,\cdot_\g, \phi)$ be an endo pre-Lie algebra. Suppose that there is an  endo pre-Lie algebra $(\g^\ast, \cdot_{\g^\ast} ,\psi^\ast)$ on the linear dual $\g^\ast$ of  $\g$.  Then the following statements are equivalent:
	\begin{itemize}
		\item[\rm(i)]$((\frkd=\g\oplus \g^\ast, \phi+\psi^\ast,(\cdot,\cdot)_-), (\g, \phi), (\g^\ast, \psi^\ast))$ is a Manin triple of endo pre-Lie algebras;
		\item[\rm(ii)]$((\g, \phi), (\g^\ast, \psi^\ast), \ad^\ast, -R^\ast, \AD^\ast, -\frkR^\ast)$ is a matched pair of endo pre-Lie algebras;
 \item[{\rm(iii)}]$(\omega_p,\phi,\psi)$ is the endo phase space of the sub-adjacent Lie algebra $\g^c$;
		\item[\rm(iv)] $(\g, \g^\ast)$ is a endo pre-Lie bialgebra.
	\end{itemize}
\end{thm}
\begin{proof}
	By Theorem \ref{thm:the equivalence of matched pairs and manin triples},   (i), (ii) and (iii) are equivalent.
	
	By Theorem \ref{thm:Manin-Mathched-bialgebra pair},  $(\g, \g^\ast, \ad^\ast, -R^\ast, \AD^\ast, -\frkR^\ast)$ is a matched pair of pre-Lie algebras if and only if $(\g, \g^\ast)$ is a pre-Lie bialgebra. Furthermore, $\phi+\psi^\ast$ is a pre-Lie algebra homomorphism if and only if \eqref{eq:compatibility conditions 1}-\eqref{eq:compatibility conditions 3} hold. Thus (ii) is equivalent to (iv).
\end{proof}

Let $\g$ be a pre-Lie algebra. For a given $r\in \g\otimes\g$, define $\alpha_r:\g\rightarrow\g\otimes\g$ by
\begin{eqnarray}\label{eq:map}
 \alpha_r(x)=(L_x\otimes {\rm id}+{\rm id}\otimes \ad_x)r ,~\forall x\in\g.
 \end{eqnarray}

The coboundary  pre-Lie bialgebra is an important class of pre-Lie bialgebras.

 \begin{pro}{\rm (\cite{Left-symmetric bialgebras})}\label{pro:coboundary  pre-Lie bialgebra}
 Let $(\g, \cdot_\g)$ be a pre-Lie algebra and $r\in \g\otimes\g$. Then  $(\g, \g^\ast, \alpha, \beta)$ is a  pre-Lie bialgebra, which is called a {\bf coboundary  pre-Lie bialgebra} if and only if  for any $x\in\g$, we have
 \begin{eqnarray}
 (P(x, y)-P(x)P(y))(r_{12}-r_{21})&=&0,\label{eq:coboundary condition 1}\\
Q(x)[[r, r]]&=&0\label{eq:coboundary condition 2},
 \end{eqnarray}
where \begin{eqnarray*}
    P(x)&=&L_x\otimes {\rm id}+{\rm id}\otimes L_x,\\
    Q(x)&=&L_x\otimes {\rm id}\otimes {\rm id}+{\rm id}\otimes L_x\otimes {\rm id}+{\rm id}\otimes {\rm id}\otimes L_x,\\~
   [[r, r]]&=&r_{13}\cdot r_{12}-r_{23}\cdot r_{21}+[r_{23}, r_{12}]-[r_{13}, r_{21}]-[r_{13}, r_{23}].
   \end{eqnarray*}
 \end{pro}

\begin{defi}
An endo pre-Lie bialgebra  $((\g,\cdot_\g), (\g^\ast,\cdot_{\g^*}), \alpha, \beta, \phi,  \psi)$ is called {\bf coboundary} if $ \alpha=\alpha_r $, for any $\ r\in\g\otimes\g$.
\end{defi}

Then we have
\begin{thm}\label{thm:equivalent theorem}
Let $(\g,\cdot_\g, \phi)$ be an endo  pre-Lie algebra, $ \psi$ dually represent $(\g,\cdot_\g, \phi)$, and $r\in\g\otimes\g$. Then linear map $ \alpha_r$ induces an endo pre-Lie bialgebra   $((\g,\cdot_\g), (\g^\ast,\cdot_{\g^*}), \alpha, \beta, \phi,  \psi)$ if and only if $r$ satisfies  \eqref{eq:coboundary condition 1} and  \eqref{eq:coboundary condition 2},  and for any $x\in\g$, the following conditions hold:
\begin{eqnarray}
(\psi L_x\otimes {\rm id})({\rm id}\otimes \psi-\phi\otimes {\rm id})(r)+({\rm id}\otimes \psi\ad_x)( \psi\otimes {\rm id}-{\rm id}\otimes\phi)(r)&=&0,\label{eq:conditions 1}\\
 (L_x\otimes {\rm id}+{\rm id}\otimes\ad_{\phi(x)})({\rm id}\otimes\phi- \psi\otimes {\rm id})(r)&=&0\label{eq:conditions 2},\\
 (L_x\otimes {\rm id}+{\rm id}\otimes\ad_{\phi(x)})(\phi\otimes{\rm id}-{\rm id}\otimes\psi)(r)&=&0\label{eq:conditions 3}.
 \end{eqnarray}
\end{thm}

\begin{proof}
By Definition \ref{defi:endo pre-Lie bialgebra} and Proposition \ref{pro:coboundary  pre-Lie bialgebra}, $((\g,\cdot_\g), (\g^\ast,\cdot_{\g^*}), \alpha, \beta, \phi,  \psi)$ is an endo pre-Lie bialgebra if and only if $r$ satisfies  \eqref{eq:coboundary condition 1} and  \eqref{eq:coboundary condition 2}, and  \eqref{eq:pre-Lie coalgebra homomorphism} and \eqref{eq:compatibility conditions 1} hold. Let $r=\Sigma a_i\otimes b_i $, for any $x\in\g$,  we have
\begin{eqnarray*}
&&( \psi\otimes \psi)\alpha_r(x)-\alpha_r( \psi(x))\\
&=&( \psi\otimes \psi)(L_x\otimes {\rm id}+{\rm id}\otimes\ad_x)\Sigma ( a_i\otimes b_i)-(L_{ \psi(x)}\otimes {\rm id} +{\rm id}\otimes\ad_{ \psi(x)})\Sigma (a_i\otimes b_i)\\
&=&( \psi\otimes \psi)\Sigma (x\cdot a_i\otimes b_i+a_i\otimes [x, b_i])-\Sigma( \psi(x)\cdot a_i\otimes b_i+a_i\otimes [ \psi(x), b_i])\\
&=&\Sigma( \psi(x\cdot a_i)\otimes \psi(b_i)+ \psi(a_i)\otimes \psi([x, b_i])- \psi(x)\cdot a_i\otimes b_i-a_i\otimes[ \psi(x), b_i])\\
&=&( \psi L_x\otimes {\rm id})({\rm id} \otimes \psi)(r)+({\rm id}\otimes \psi\ad_x)( \psi\otimes {\rm id} )(r)-\Sigma( \psi (x\cdot \phi(a_i))\otimes b_i+a_i\otimes \psi[x, \phi(b_i)])\\
&=&( \psi L_x\otimes {\rm id})({\rm id} \otimes \psi)(r)+({\rm id}\otimes \psi\ad_x)( \psi\otimes {\rm id})(r)-( \psi L_x\otimes {\rm id})(\phi\otimes {\rm id})(r)-({\rm id}\otimes \psi\ad_x)({\rm id}\otimes\phi)(r)\\
&=&( \psi L_x\otimes {\rm id})({\rm id} \otimes \psi-\phi\otimes {\rm id})(r)+({\rm id}\otimes \psi\ad_x)( \psi\otimes {\rm id}-{\rm id}\otimes\phi)(r),
 \end{eqnarray*}
which implies that \eqref{eq:pre-Lie coalgebra homomorphism} holds if and only if  \eqref{eq:conditions 1} holds.
Similarly, we have
\begin{eqnarray*}
({\rm id}\otimes\phi)\alpha_r(x)-( \psi\otimes {\rm id} )\alpha_r(\phi(x))
&=&(L_x\otimes {\rm id}+{\rm id}\otimes\ad_{\phi(x)})({\rm id}\otimes\phi- \psi\otimes {\rm id})(r),\\
(\phi\otimes{\rm id})\alpha_r(x)-({\rm id}\otimes\psi )\alpha_r(\phi(x))
&=&(L_x\otimes {\rm id}+{\rm id}\otimes\ad_{\phi(x)})(\phi\otimes{\rm id}-{\rm id}\otimes\psi)(r).
 \end{eqnarray*}
which implies that \eqref{eq:compatibility conditions 1} holds if and only if  \eqref{eq:conditions 2} holds, and  \eqref{eq:compatibility conditions 12} holds if and only if  \eqref{eq:conditions 3} holds
\end{proof}

\emptycomment{For coherent homomorphism of pre-Lie bialgebras, by Proposition \ref{pro:coherent endo and endo pre-Lie bia}, we have
\begin{cor}\label{cor:condition of equivalence}
Let $(\g, \cdot_\g)$  be a pre-Lie algebra  and $\phi:\g\rightarrow\g$ be a  pre-Lie algebra homomorphism. Let $ \psi:\g\rightarrow\g$ be a linear map satisfying  \eqref{eq:compatibility conditions 2} and \eqref{eq:compatibility conditions 3} and $r\in\g\otimes\g$. Then $(\g, \g^\ast, \alpha_r, \beta)$ is a  pre-Lie bialgebra and $(\phi, \psi)$ is a coherent endomorphism of pre-Lie bialgebras if and only if  \eqref{eq:coboundary condition 1}-\eqref{eq:conditions 3} are satisfied.
\end{cor}}

By Theorem \ref{thm:equivalent theorem}, we have
\begin{cor}\label{cor:endo YB equation}
Let  $(\g,\cdot_\g, \phi)$   be an endo pre-Lie algebra and $ \psi$ dually represent $(\g,\cdot_\g, \phi)$.  Let $r\in\g\otimes\g$, then the linear map $\alpha_r$ given by \eqref{eq:map} induces an endo pre-Lie bialgebra $((\g,\cdot_\g), (\g^\ast,\cdot_{\g^*}), \alpha, \beta, \phi,  \psi)$  if $r$ satisfies \eqref{eq:coboundary condition 1} and the following equations hold:
\begin{eqnarray}
[[r, r]]=r_{13}\cdot r_{12}-r_{23}\cdot r_{21}+[r_{23}, r_{12}]-[r_{13}, r_{21}]-[r_{13}, r_{23}]&=&0,\label{eq:equation 1}\\
(\phi\otimes {\rm id}-{\rm id}\otimes \psi)(r)&=&0,\label{eq:equation 2}\\
( \psi\otimes {\rm id}-{\rm id}\otimes\phi)(r)&=&0.\label{eq:equation 3}
 \end{eqnarray}
 \end{cor}

Equation \eqref{eq:equation 1} is the classical Yang-Baxter equation (CYBE) in a pre-Lie algebra and a solution of the CYBE in a pre-Lie algebra is called a classical $\frks$-matrices. A pre-Lie bialgebra $(\g, \g^\ast, \alpha, \beta)$ is called {\bf quasi-triangular} if it is obtained from a classical $\frks$-matrices $r$ given by \eqref{eq:map} and is called {\bf triangular} if it is obtained from symmetric classical $\frks$-matrices r (i.e. $r=\tau(r))$. Moreover, if $r$ is symmetric, then \eqref{eq:equation 2} holds if and only if \eqref{eq:equation 3}  holds.

\begin{defi}\label{defi:classical Yang-Baxter equation}
Let $(\g,\cdot_\g, \phi)$ be an endo  pre-Lie algebra, $ \psi$ dually represent $(\g,\cdot_\g, \phi)$, and $r\in\g\otimes\g$. Then \eqref{eq:equation 1} with conditions given by \eqref{eq:equation 2} and \eqref{eq:equation 3} are  called the {\bf $ \psi$-classical Yang-Baxter equation} {\bf ($ \psi$-CYBE)} in   $(\g,\cdot_\g, \phi)$.
\end{defi}

Let $(\g,\cdot_\g, \phi)$ be an endo  pre-Lie algebra, $ \psi$ dually represent $(\g,\cdot_\g, \phi)$, and $r\in\g\otimes\g$. If $r$ is a solution of the $ \psi$-classical Yang-Baxter equation and satisfies \eqref{eq:coboundary condition 1},  by Corollary \ref{cor:endo YB equation}, the linear map $\alpha_r$ given by \eqref{eq:map} induces an endo pre-Lie bialgebra $((\g,\cdot_\g), (\g^\ast,\cdot_{\g^*}), \alpha, \beta, \phi,  \psi)$.

\section{Coherent homomorphisms of pre-Lie bialgebras, Manin triples and phase spaces of Lie algebras}\label{sec:coherent endoprelie}
As the main motivation and application of our study
of endo pre-Lie bialgebras, Manin triples, matched pairs and phase space spaces of Lie algebras, we introduce new notions of homomorphisms of these structures ensuring compatibility with structural correspondences and enabling the organization of these objects into categories so that the maps among the classes become functors or category equivalences.

\subsection{Coherent homomorphisms of phase spaces and Manin triples for pre-Lie algebras }

Let $\g$ be a Lie algebra and $\omega$ be a skew-symmetric non-degenerate bilinear form. Define $\omega^\natural:\g\rightarrow\g^*$ by
$$\langle\omega^\natural(x),y\rangle=\omega(x,y),\quad\forall~x,y\in \g.$$
Furthermore, we let $T_\omega=(\omega^\natural)^{-1}$. Then $\omega$ is a $2$-cocycle if and only if $T_\omega:\g^*\rightarrow \g$ is a relative Rota-Baxter operator with respective to the coadjoint representation $(\g^*;\ad^*)$.

In order to build the relationships between symplectic Lie algebras and relative Rota-Baxter operators on the level of categories, we introduce the notion of coherent homomorphisms between symplectic Lie algebras. 

\begin{defi}
Let $(\g,\omega_\g)$ and $(\h,\omega_\h)$ be two symplectic Lie algebras. A {\bf coherent homomorphism} from $(\g,\omega_\g)$ to $(\h,\omega_\h)$ consists of a  Lie algebra homomorphism $\phi:\g\rightarrow\h$ and a linear map $ \psi:\h\rightarrow\g$ such that the following equalities holds:
\begin{eqnarray}
 \label{eq:weak symp1} [x,\psi(y)]_\g &=& \psi[\phi(x),y]_\h, \\
 \label{eq:weak symp2} \omega_\h(\phi(x),y) &=& \omega_\g(x,\psi(y)), \quad\forall~x\in\g,y\in \h.
\end{eqnarray}
\end{defi}

\begin{pro} \label{pro:weak-homomorphsim symp}
Let $(\g,\omega_\g)$ and $(\h,\omega_\h)$ be two symplectic Lie algebras. Let $\phi:\g\rightarrow \h$ and $\psi:\h\rightarrow \g$ be linear maps. Then  
$(\phi, \psi)$ is a coherent homomorphism of symplectic Lie algebras from $(\g,\omega_\g)$ to $(\h,\omega_\h)$ if and only if $(\phi,\psi^*)$ is a homomorphism between
the corresponding relative Rota-Baxter operators $T_{\omega_\g}$ and $T_{\omega_\h}$. 

This correspondence defines an equivalence from the category ${\bf
SLA}$ of symplectic Lie algebras to the category ${\bf IRB}$ of
invertible relative Rota-Baxter operators on the Lie algebras associated to the representations $\ad^*$ satisfying $\langle T_{\omega}(x),y\rangle=-\langle T_{\omega}^\sharp(y),x\rangle$ for $x,y\in \g$ or $\h$.
\end{pro}
\begin{proof}
It follows by a direct calculation.
\end{proof}

The benefit of coherent homomorphisms of Manin triples derived from Manin triples of endo pre-Lie algebras is that it is compatible with the naturally defined morphisms of pre-Lie bialgebras.
\begin{defi}\label{defi:coherent homomorphisms of Manin triples}
Let $((\g\bowtie\g^\ast,(\cdot,\cdot)_ -), \g, \g^\ast)$ and $((\h\bowtie\h^\ast,(\cdot,\cdot)_-), \h, \h^\ast)$ be Manin triples of pre-Lie algebras. A {\bf coherent  homomorphism} between them is a pre-Lie algebra homomorphism
\begin{eqnarray*}
f:\g\bowtie\g^\ast\rightarrow\h\bowtie\h^\ast,
\end{eqnarray*}
 that restricts to pre-Lie algebra homomorphisms
 \begin{eqnarray*}
f\mid_\g:\g\rightarrow\h, \quad f\mid_{\g^\ast}:\g^\ast\rightarrow\h^\ast.
 \end{eqnarray*}
 If $f$ is bijective, it is called a {\bf coherent isomorphism of  Manin triples}. Let  {\bf MT} denote the category of Manin triples with coherent  homomorphisms.
\end{defi}

By the definition of Manin triple of endo pre-Lie algebras, we have
\begin{pro}
Let $((\g\bowtie\g^\ast,(-,-)_ -), \g, \g^\ast)$ be a Manin triple of pre-Lie algebras. Then a linear map $f\in \End(\g\bowtie\g^\ast)$ is a coherent endomorphism of the Manin triple if and only if $((\g\bowtie\g^\ast, f), (\g, f\mid_\g), (\g^\ast, f\mid_{\g^\ast}))$ is a Manin triple of endo pre-Lie algebras associated to $(\g, f\mid_\g)$ and $(\g^\ast, f\mid_{\g^\ast})$.
\end{pro}

\begin{defi}\label{defi:coherent homomorphisms of phase spaces}
Let $(\g,\omega_\g)$ and $(\h,\omega_\h)$ be phase spaces of Lie algebras $\g$ and $\h$, respectively. A {\bf coherent  homomorphism} between them is a linear map
\begin{eqnarray*}
f:\g\bowtie\g^\ast\rightarrow\h\bowtie\h^\ast,
\end{eqnarray*}
 that restricts to linear maps
 \begin{eqnarray*}
f\mid_\g:\g\rightarrow\h, \quad f\mid_{\g^\ast}:\g^\ast\rightarrow\h^\ast,
 \end{eqnarray*}
 satisfying
 \begin{equation}\label{eq:endo sym2}
f^*([f^*(x+a), y+b]_p)=[x+a, f^*(y+b)]_p,\quad \forall~x\in\g,a\in\g^*,y\in\h,b\in\h^*.
\end{equation}
 If $f$ is bijective, it is called a {\bf coherent isomorphism of  phase spaces}. Let  {\bf PS} denote the category of phase spaces with coherent  homomorphisms.
\end{defi}

\begin{pro}
Let $(\g,\omega_\g)$ and $(\h,\omega_\h)$ be phase spaces of Lie algebras $\g$ and $\h$, respectively. Then \begin{eqnarray*}
f:\g\bowtie\g^\ast\rightarrow\h\bowtie\h^\ast,
\end{eqnarray*} 
is a coherent  homomorphism between them if and only if $(f,f^*)$ is a coherent homomorphism of symplectic Lie algebras from $(\g,\omega_p)$ to $(\h,\omega_p)$.
\end{pro}
\begin{proof}
By a direct calculation, we have
\begin{eqnarray*}
\omega_\h(f(x+a),y+b)&=&\omega_\g(x+a,f^*(y+b)),\quad \forall~x\in\g,a\in\g^*,y\in\h,b\in\h^*.
\end{eqnarray*}
Furthermore, \eqref{eq:endo sym2} is just \eqref{eq:weak symp1}. The conclusion follows.
\end{proof}

\begin{pro}
Let $(\g,\omega_p)$ be a phase space of a Lie algebra $\g$. Then a linear map $f\in \End(\g\bowtie\g^\ast)$ is a coherent endomorphism of the phase space if and only if $(\g\bowtie\g^\ast,\omega_p, f\mid_\g, f^*\mid_{\g})$ is an endo phase space of the Lie algebra $\g$.
\end{pro}
\begin{proof}
It follows by a direct calculation.
\end{proof}

Due to the correspondence between endo phase spaces of Lie algebras and Manin triples of endo pre-Lie algebras given in Theorem \ref{thm:the equivalence of matched pairs and manin triples}, we have
\begin{pro}
Let $(\g,\omega_p)$ be a phase space of a Lie algebra $\g$ and $(\frkd=\g\oplus\g^*,(\cdot,\cdot)_-=\omega_p)$ its corresponding quadratic pre-Lie algebra. Then $f$ is a coherent endomorphism of the phase space $(\g, \omega_p)$ if and only if $f$ is a coherent endomorphism of the  Manin triple $((\frkd,(-,-)_ -), \g, \g^\ast)$.
\end{pro}

Furthermore, we have
\begin{pro}\label{pro:category phase-Manin}
\begin{itemize}
\item[\rm(i)] Let $(\g, \omega_\g)$ and $(\h, \omega_\h)$ be phase spaces and, let $((\g\bowtie\g^\ast,(-,-)_ -), \g, \g^\ast)$ and $((\h\bowtie\h^\ast,(-,-)_ -), \h, \h^\ast)$ be the corresponding Manin triples of pre-Lie algebras.  There is a bijection between the set $\Hom_{\bf PS}(\g, \h)$ of coherent homomorphisms between the phase spaces and the set $\Hom_{\bf MT}(\g\bowtie\g^\ast, \h\bowtie\h^\ast)$ of coherent homomorphisms between the Manin triples, where  the bijection is give by sending $f$ to $f$.
\item[\rm(ii)] The correspondence in {\rm (i)} gives an equivalence from the category {\bf PS} of phase spaces to the category {\bf MT} of Manin triples.
\end{itemize}
\end{pro}

\subsection{Coherent homomorphisms of pre-Lie bialgebras, Manin triples and matched pairs }
As both main motivation and application of our study of endo pre-Lie bialgebras, we introduce a notion of homomorphism of pre-Lie bialgebras,  that is compatible with those of Manin triples and matched pairs.
\begin{defi}
A {\bf coherent endomorphism} on a pre-Lie bialgebras $(\g, \g^\ast, \alpha, \beta)$ consists of a pre-Lie algebra homomorphism $\phi:\g\rightarrow\g$ and a pre-Lie coalgebra homomorphism $ \psi:\g\rightarrow\g$ satisfying \eqref{eq:compatibility conditions 1}-\eqref{eq:compatibility conditions 3}.
\end{defi}
Then we have
\begin{pro}\label{pro:coherent endo and endo pre-Lie bia}
 $((\g, \cdot_\g),(\g^\ast,\cdot_{\g^\ast}), \alpha, \beta, \phi,  \psi)$ is an endo pre-Lie bialgebra if and only if $(\phi,  \psi)$ is a coherent endomorphism of the pre-Lie bialgebras $(\g, \g^\ast, \alpha, \beta)$.
\end{pro}
\begin{defi}\label{defi:A coherent endomorphism of pre-Lie bialgebras}
Let $(\g, \g^\ast, \alpha_\g, \beta_\g)$ and $(\h, \h^\ast, \alpha_\h, \beta_\h)$ be pre-Lie bialgebras. {\bf A coherent homomorphism of pre-Lie bialgebras} from $(\g, \g^\ast, \alpha_\g, \beta_\g)$ to $(\h, \h^\ast, \alpha_\h, \beta_\h)$ is a pair  $(\phi, \psi)$ of linear maps such that
\begin{itemize}
\item[\rm(i)] $\phi:\g\rightarrow\h$ is a homomorphism of pre-Lie algebras,
\item[\rm(ii)]  $\psi:\h\rightarrow\g$ is  a homomorphism of pre-Lie coalgebras,
\item[\rm(iii)] the following equations hold:
\begin{eqnarray}
	({\rm id}_\g\otimes\phi)\circ \alpha_\g&=&( \psi\otimes {\rm id}_\h)\circ \alpha_\h\circ \phi,\label{eq:the polarization 1}\\
	(\phi \otimes {\rm id}_\g)\circ \alpha_\g&=&( {\rm id}_\h  \otimes \psi)\circ \alpha_\h\circ \phi,\label{eq:the polarization 12}\\
	\psi(y\cdot_\h \phi(x))&=&\psi(y) \cdot_\g x,\label{eq:the polarization 2}\\
	\psi(\phi(x)\cdot_\h  y)&=&x\cdot_\g \psi(y),\label{eq:the polarization 3}~\forall x\in\g, y\in \h.
\end{eqnarray}
\end{itemize}
If both $\phi$ and $ \psi$ are bijective, the pair is called {\bf coherent isomorphism of  pre-Lie bialgebras}. Let {\bf PLB} denote the category of pre-Lie algebras with coherent homomorphisms.
\end{defi}

Due to the correspondence between endo pre-Lie bialgebras and Manin triples of endo pre-Lie algebras given in Theorem \ref{thm:equvialent}, we have
\begin{pro}
Let $(\g, \g^\ast, \alpha, \beta)$ be a  pre-Lie bialgebra and $((\g\bowtie\g^\ast,(-,-)_ -), \g, \g^\ast)$ be the corresponding Manin triple. Then $(\phi,  \psi)$ is a coherent endomorphism of the pre-Lie bialgebra $(\g, \g^\ast, \alpha, \beta)$ if and only if $\phi+ \psi^\ast$ is a coherent endomorphism of the  Manin triple $((\g\bowtie\g^\ast,(-,-)_ -), \g, \g^\ast)$.
\end{pro}

\begin{pro}\label{pro:all the spaces are finite dimensional}
\begin{itemize}
\item[\rm(i)] Let $(\g, \g^\ast, \alpha_\g, \beta_\g)$ and $(\h, \h^\ast, \alpha_\h, \beta_\h)$ be pre-Lie bialgebras and, let $((\g\bowtie\g^\ast,(-,-)_ -), \g, \g^\ast)$ and $((\h\bowtie\h^\ast,(-,-)_ -), \h, \h^\ast)$ be the corresponding Manin triples of pre-Lie algebras.  There is a bijection between the set $\Hom_{\bf PLB}(\g, \h)$ of coherent homomorphisms between the pre-Lie bialgebras and the set $\Hom_{\bf MT}(\g\bowtie\g^\ast, \h\bowtie\h^\ast)$ of coherent homomorphisms between the Manin triples, where  the bijection is give by sending $(\phi, \psi)$ to $\phi+ \psi^\ast$.
\item[\rm(ii)] The correspondence in {\rm (i)} gives an equivalence from the category {\bf LB} of pre-Lie bialgebras to the category {\bf MT} of Manin triples.
\end{itemize}
\end{pro}
\begin{proof}
{\rm(i)}  Asumme that $(\phi,  \psi)$ is a coherent homomorphism of the pre-Lie bialgebras. For any $ x, y\in \g, a, b\in\g^\ast $ and for $\phi+ \psi^\ast:\g\bowtie\g^\ast\rightarrow \h\bowtie\h^\ast$, we have
\begin{eqnarray*}
&&(\phi+ \psi^\ast)((x+a)\ast(y+b))\\
&=&(\phi+ \psi^\ast)(x\cdot_\g y+\AD^\ast(a)y-\frkR^\ast(b)x+a\cdot_{\g^\ast}b+\ad^\ast(x)b-R^\ast(y)a)\\&=&\phi(x\cdot_\g y+\AD^\ast(a)y-\frkR^\ast(b)x)+ \psi^\ast(a\cdot_{\g^\ast}b+\ad^\ast(x)b-R^\ast(y)a),\\
&&(\phi+ \psi^\ast)(x+a)\ast(\phi+ \psi^\ast)(y+b)\\
&=&(\phi(x)+ \psi^\ast(a))\ast(\phi(y)+ \psi^\ast(b))\\
&=&\phi(x)\cdot_\g\phi(y)+\AD^\ast( \psi^\ast(a))\phi(y)
-\frkR^\ast( \psi^\ast(b))\phi(x)\\
&&+\psi^\ast(a)\cdot_{\g^\ast} \psi^\ast(b)+\ad^\ast(\phi(x))( \psi^\ast(b))-R^\ast(\phi(y))( \psi^\ast(a)).
\end{eqnarray*}
Since $\phi:\g\rightarrow\h$ and $ \psi^\ast:\g^\ast\rightarrow\h^\ast$ are homomorphisms of pre-Lie algebras, we have
\begin{eqnarray*}
\phi(x\cdot_\g y)=\phi(x)\cdot_\g\phi(y) ,\quad  \psi^\ast(a\cdot_{\g^\ast}b)= \psi^\ast(a)\cdot_{\g^\ast} \psi^\ast(b).
\end{eqnarray*}
By \eqref{eq:the polarization 1}, we have
\begin{eqnarray*}
\phi(\AD^\ast(a)y)=\AD^\ast( \psi^\ast(a))\phi(y) , \quad \phi(-\frkR^\ast(b)x)=-\frkR^\ast( \psi^\ast(b))\phi(x).
\end{eqnarray*}
By \eqref{eq:the polarization 2}, we have
\begin{eqnarray*}
 \psi^\ast(\ad^\ast(x)b)=\ad^\ast(\phi(x)) \psi^\ast(b), \quad \psi^\ast(-R^\ast(y)a)=-R^\ast(\phi(y)) \psi^\ast(a).
\end{eqnarray*}
Combining with the above equations, we have
$$(\phi+ \psi^\ast)((x+a)\ast(y+b))=(\phi+ \psi^\ast)(x+a)\ast(\phi+ \psi^\ast)(y+b),$$
which implies that $\phi+ \psi^\ast$ is a coherent homomorphism of the Manin triples.

Conversely, by a similar argument,  if $f$ is a coherent homomorphism of Manin triples, then $(f\mid_\g, (f\mid_{\g^\ast})^\ast)$ is a coherent homomorphism of the corresponding pre-Lie bialgebras.

{\rm(ii)} follows from {\rm(i)} directly.
\end{proof}

Due to the correspondence between matched pairs and Manin triples of  pre-Lie algebras, by Definition \ref{defi:coherent homomorphisms of Manin triples} we can define the coherent homomorphism of matched pairs of pre-Lie algebras directly.
\begin{defi}\label{defi:coherent homomorphism}
Let $(\g, \g^\ast, \ad^\ast, -R^\ast, \AD^\ast, -\frkR^\ast)$ and $(\h, \h^\ast, \ad_\h^\ast, -R_\h^\ast, \AD_{\h^\ast}^\ast, -\frkR_{\h^\ast}^\ast)$ be matched pairs of pre-Lie algebras. A {\bf coherent homomorphism} between them is a  pre-Lie algebra homomorphism
\begin{eqnarray*}
f:\g\bowtie\g^\ast\rightarrow\h\bowtie\h^\ast,
\end{eqnarray*}
that restricts to pre-Lie algebra homomorphisms
 \begin{eqnarray*}
f\mid_\g:\g\rightarrow\h, f\mid_{\g^\ast}:\g^\ast\rightarrow\h^\ast.
 \end{eqnarray*}
 If $f$ is bijection,  it is called a {\bf coherent isomorphism of matched pairs}. Let {\bf MP} denote the category of such matched pairs of pre-Lie algebras with the coherent homomorphisms.
\end{defi}

On the other hand,  by Theorem \ref{thm:endo pre-Lie algebra} we also have the following compatibility of coherent homomorphisms of matched pairs.
\begin{pro}
Let $(\g, \g^\ast, \ad^\ast, -R^\ast, \AD^\ast, -\frkR^\ast)$ be a matched pair of pre-Lie algebras. Then a linear map $f:\g\bowtie\g^\ast\rightarrow\g\bowtie\g^\ast$ is a coherent endomorphism of this matched pair of pre-Lie algebras if and only if $((\g, f\mid_\g), (\g^\ast, f\mid_{\g^\ast}), \ad^\ast, -R^\ast, \AD^\ast, -\frkR^\ast)$ is a matched pair of endo pre-Lie algebra.
\end{pro}

Furthermore, we have
\begin{pro}\label{eq:MP-MT}
\begin{itemize}
\item[\rm(i)] Let $((\g\bowtie\g^\ast,(\cdot,\cdot)_-) ,\g, \g^\ast)$ and $((\h\bowtie\h^\ast,(\cdot,\cdot)_-) ,\h, \h^\ast)$ be Manin triples of pre-Lie algebras, and let  $(\g, \g^\ast, \ad^\ast, -R^\ast, \AD^\ast, -\frkR^\ast)$ and $(\h, \h^\ast, \ad_\h^\ast, -R_\h^\ast, \AD_{\h^\ast}^\ast, -\frkR_{\h^\ast}^\ast)$ be the corresponding  matched pairs of pre-Lie algebras. Then a linear map $f:\g\bowtie\g^\ast\rightarrow\h\bowtie\h^\ast$ is a coherent homomorphism of Manin triples if and only if $f$ is a coherent homomorphisms of matched pairs.
\item[\rm(ii)] The correspondence in {\rm (i)} gives an equivalence from the category {\bf MP} of matched pairs of  the form $(\g, \g^\ast, \ad^\ast, -R^\ast, \AD^\ast, -\frkR^\ast)$ to the category {\bf MT} of Manin triples.
\end{itemize}
\end{pro}
\begin{proof}
By Definition \ref{defi:coherent homomorphisms of Manin triples} and \ref{defi:coherent homomorphism}, (i) holds.

 By (i), there is a bijection between the set of coherent homomorphisms $f:\g\bowtie\g^\ast\rightarrow\h\bowtie\h^\ast$  of Manin triples and the set of coherent homomorphisms $f:\g\bowtie\g^\ast\rightarrow\h\bowtie\h^\ast$ of matched pairs by sending $f$ to $f$ itself, which imply (ii) holds.
\end{proof}

By Proposition \ref{pro:category phase-Manin}, Proposition \ref{pro:all the spaces are finite dimensional} and Proposition \ref{eq:MP-MT},  the following categories are equivalent.
\begin{thm}\label{thm:main theorem}
\begin{itemize}
\item[\rm(i)] the category {\bf PLB} of pre-Lie bialgebras;
\item[\rm(ii)] the category {\bf MT} of Manin triples;
\item[\rm(iii)] the category {\bf PS} of phase spaces;
\item[\rm(iv)]the category {\bf MP} of matched pairs of the form  $(\g, \g^\ast, \ad^\ast, -R^\ast, \AD^\ast, -\frkR^\ast)$.
\end{itemize}
\end{thm}

The following definition of homomorphism of pre-Lie bialgebras are similar to the definition of homomorphism of Lie bialgebras.
\begin{defi}\label{defi:homomorphism of pre-Lie bialgebras}
{\bf A homomorphism of pre-Lie bialgebras} from $(\g, \g^\ast, \alpha_\g, \beta_\g)$ to $(\h, \h^\ast, \alpha_\h, \beta_\h)$ is a linear map $ f:\g\rightarrow\h$ such that  $f$ is both a pre-Lie algebra homomorphism and a pre-Lie coalgebra homomorphism.  Furthermore, if $f$ is also bijective, then $f$ is called an {\bf isomorphism of pre-Lie bialgebras}.
\end{defi}

In the bijective case, we have
\begin{pro}
Let $(\g, \g^\ast, \alpha_\g, \beta_\g)$ and $(\h, \h^\ast, \alpha_\h, \beta_\h)$ be two pre-Lie bialgebras. Then $(\g, \g^\ast, \alpha_\g, \beta_\g)$ is isomorphic to  $(\h, \h^\ast, \alpha_\h, \beta_\h)$ if and only if there exists a pre-Lie algebra isomorphism $\phi:\g\rightarrow\h$ such that $(\phi, \phi^{-1})$ is a coherent isomorphism.
\end{pro}
\begin{proof}
Take $ \psi=\phi^{-1}$, then \eqref{eq:the polarization 1}-\eqref{eq:the polarization 3} hold automatically. Moreover, $\phi^\ast$ is an isomorphism of pre-Lie algebras if and only if $(\phi^{-1})^\ast$ is an isomorphism of pre-Lie algebras.
\end{proof}

\begin{rmk}
In general, homomorphisms of  pre-Lie bialgebras and coherent homomorphisms of pre-Lie bialgebras are not related. For example,  when $\g=\h, f+f^\ast$ is usually not an endomorphism of the  pre-Lie algebra $\g\bowtie\g^\ast$.
\end{rmk}

Next we consider another notion of homomorphisms of Manin triples of pre-Lie algebras, originated  from the notion of isomorphisms of  Manin triples.
\begin{defi}\label{defi: strong homomorphism}
Let $((\g\bowtie\g^\ast,(\cdot,\cdot)_-) ,\g, \g^\ast)$ and $((\h\bowtie\h^\ast,(\cdot,\cdot)_-) ,\h, \h^\ast)$ be Manin triples of pre-Lie algebras. A {\bf  homomorphism } between them is a pre-Lie algebra homomorphism
\begin{eqnarray*}
f:\g\bowtie\g^\ast\rightarrow\h\bowtie\h^\ast,
\end{eqnarray*}
that restricts to pre-Lie algebra homomorphisms
\begin{eqnarray*}
f\mid_\g:\g\rightarrow\h, f\mid_{\g^\ast}:\g^\ast\rightarrow\h^\ast,
 \end{eqnarray*}
and satisfies
 homomorphism
\begin{eqnarray}\label{eq:compatible with the linear form}
(x+a, y+b)_-=(f(x+a), f(y+b))_-,~~\forall x+a, y+b\in \g\bowtie\g^\ast.
\end{eqnarray}
\end{defi}

\begin{pro}\label{pro:Manin triples}
Let$((\g\bowtie\g^\ast,(\cdot,\cdot)_-) ,\g, \g^\ast)$ and $((\h\bowtie\h^\ast,(\cdot,\cdot)_-) ,\h, \h^\ast)$  be two Manin triples of pre-Lie algebras. Let $(\phi, \psi)$ be a  homomorphism between them. Then $\psi\phi={\rm id}$, that is, $\phi$ is injective and $ \psi$ is  surjective.
\end{pro}
\begin{proof}
By  \eqref{eq:compatible with the linear form}, for any $x\in \g, a\in\g^\ast $ we have
\begin{eqnarray*}
(x, a)_-=(\phi(x), \psi^\ast(a))_-
=-\langle\phi(x),  \psi^\ast(a)\rangle
=-\langle \psi(\phi(x)), a\rangle
=( \psi(\phi(x)), a)_-     ,
\end{eqnarray*}
which implies that $ \psi\phi={\rm id}.$
\end{proof}

\subsection{Coherent homomorphism of classical $\frks$-matrices}
We now build the relation between classical $\frks$-matrices and bialgebras for pre-Lie algebras to the level of categories. B

As in the case of endo pre-Lie bialgebras, solutions of the $ \psi$-CYBE in endo pre-Lie algebra motivates us to the notion of  morphisms of  classical $\frks$-matrices that is compatible with coherent homomorphisms of pre-Lie bialgebras.
\begin{defi}\label{defi:coherent homomorphism of classical Yang-Baxter equation}
Let $\g, \h$ be pre-Lie algebras and $r_\g, r_\h$ be classical  $\frks$-matrices in $\g$ and $\h$  respectively.  A {\bf coherent homomorphism} from $r_\g$ to $r_\h$ consist of a  pre-Lie algebra homomorphism $\phi:\g\rightarrow\h$ and a linear map $ \psi:\h\rightarrow\g$ satisfying
\begin{eqnarray}
( \psi\otimes {\rm id}_\h)r_\h&=&({\rm id}_\g\otimes\phi)r_\g,\label{eq:coherent homomorphism  1}\\
({\rm id}_\h\otimes  \psi)r_\h&=&(\phi\otimes {\rm id}_\g)r_\g,\label{eq:coherent homomorphism  2}\\
\psi(y\cdot_\h \phi(x))&=&\psi(y) \cdot_\g x ,\label{eq:coherent homomorphism  21}\\
 \psi(\phi(x)\cdot_\h y)&=&x\cdot_\g  \psi(y)\label{eq:coherent homomorphism  3}, ~\forall x\in\g, y \in\h.
\end{eqnarray}
\end{defi}

If $\phi$ and $ \psi$ are also isomorphisms. Then $(\phi,  \psi)$ is called a {\bf coherent isomorphism} from $r_\g$ to $r_\h$. Let {\bf Cs} denote the category of classical $\frks$-matrices with the coherent morphisms.

Then by Definitions \ref{defi:classical Yang-Baxter equation} and \ref{defi:coherent homomorphism of classical Yang-Baxter equation}, we have
\begin{pro}
Let $(\g,\cdot_\g, \phi)$ be an endo pre-Lie algebra and $ \psi\in \End(\g)$ dual represent $(\g, \phi)$. Then $r\in\g\otimes\g$ is a solution of $\psi$-CYBE in the endo pre-Lie algebra $(\g,\cdot_\g, \phi)$  if and only if $r\in\g\otimes\g$ is a  classical $\frks$-matrices, and $(\phi,\psi)$ is a coherent endomorphism on $r$.
\end{pro}

Two classical $\frks$-matrices $r_1$ and $r_2$ in a pre-Lie algebra $\g$ are said to be {\bf equivalent} if there is a pre-Lie algebra isomorphism $\phi:\g\rightarrow\g$ such that $(\phi\otimes \phi)r_1=r_2$.
\begin{cor}
Let $\g$ be a pre-Lie algebra and $r_1$, $r_2$ be classical $\frks$-matrices in $\g$. Then $r_1$ is equivalent to $r_2$ if and only if there exists a pre-Lie algebra isomorphism $\phi:\g\rightarrow\g$ such that $(\phi, \phi^{-1})$ is  a coherent isomorphism from $r_1$ to $r_2$.
\end{cor}

\begin{proof}
By the fact that $r_1$ is equivalent to $r_2$, $(\phi, \phi^{-1})$ satisfying \eqref{eq:coherent homomorphism  1}-\eqref{eq:coherent homomorphism  3}. Conversely, by \eqref{eq:coherent homomorphism  1}, we have $(\phi\otimes \phi)r_1=r_2$, which implies that $r_1$ is equivalent to $r_2$ if and only if there exists a pre-Lie algebra isomorphism $\phi:\g\rightarrow\g$ such that $(\phi, \phi^{-1})$ is a coherent isomorphism from $r_1$ to $r_2$.
\end{proof}

Recall that for a pre-Lie algebra $\g$ and a classical $\frks$-matrix $r$ in $\g$ satisfying \eqref{eq:coboundary condition 1}, the $(\g, \g^\ast, \alpha, \beta)$ is a quasi-triangular pre-Lie bialgebra. On the level of categories, we have
\begin{thm}
Let $\g$, $\h$ be pre-Lie algebras and $r_\g,r_\h$ be classical  $\frks$-matrices in $\g$ and $\h$ respectively satisfying \eqref{eq:coboundary condition 1}. If $(\phi, \psi)$ is a coherent homomorphism of classical $\frks$-matrices from $r_\g$ to $r_\h$, then  $(\phi, \psi)$ is a coherent homomorphism of corresponding pre-Lie bialgebras from $(\g, \g^\ast, \alpha_{r_\g}, \beta_\g)$ to $(\h, \h^\ast, \alpha_{r_\h}, \beta_\h)$.
\end{thm}

\begin{proof}
 By the proof of Theorem \ref{thm:equivalent theorem}, for any $x\in\h$, $ r_\g=\Sigma(a_i\otimes b_i)$ and $r_\h=\Sigma(\xi_i\otimes\eta_i)$, we have
 \begin{eqnarray*}0&=&( \psi\otimes \psi)\alpha_{r_\h}(x)-\alpha_{r_\g} (\psi(x))\\
&=&( \psi\otimes \psi)(L_x\otimes {\rm id}_\h+{\rm id}_\h\otimes\ad_x)\Sigma(\xi_i\otimes\eta_i)-(L_{ \psi(x)}\otimes {\rm id}_\g+{\rm id}_\g\otimes\ad_{ \psi(x)})\Sigma( a_i\otimes b_i)\\
&=&( \psi\otimes \psi)\Sigma(x\cdot_\h \xi_i\otimes\eta_i)+\xi_i\otimes[x, \eta_i])-\Sigma( \psi(x)\cdot_\g a_i\otimes b_i+a_i\otimes[ \psi(x), b_i]_\g)\\
&=&\Sigma( \psi(x\cdot_\h\xi_i)\otimes \psi(\eta_i)+ \psi(\xi_i)\otimes \psi([x, \eta_i]_\h)- \psi(x)\cdot_\g a_i\otimes b_i-a_i\otimes[ \psi(x), b_i]_\g)\\
&=&( \psi L_x\otimes {\rm id}_\g)({\rm id}_\h\otimes \psi)r_\h+({\rm id}_\g\otimes \psi\ad_x)( \psi\otimes {\rm id}_\h)r_\h-\Sigma( \psi(x\cdot_\h\phi(a_i))\otimes b_i+a_i\otimes \psi([x, \phi(b_i)]_\g))\\
&=&( \psi L_x\otimes {\rm id}_\g)({\rm id}_\h\otimes \psi)r_\h+({\rm id}_\g\otimes \psi\ad_x)( \psi\otimes {\rm id}_\h)r_\h-( \psi L_x\otimes {\rm id}_\g)(\phi\otimes {\rm id}_\g)r_\g-({\rm id}_\g\otimes \psi\ad_x)({\rm id}_\g\otimes\phi)r_\g\\
&=&( \psi L_x\otimes {\rm id}_\g)(({\rm id}_\h\otimes \psi)r_\h-(\phi\otimes {\rm id}_\g)r_\g)+({\rm id}_\g\otimes \psi\ad_x)(( \psi\otimes {\rm id}_\h)r_\h-({\rm id}_\g\otimes\phi)r_\g),
\end{eqnarray*}
which implies that $( \psi\otimes \psi)\alpha_{r_\h}=\alpha_{r_\g} \psi $ if and only if
\begin{eqnarray*}
( \psi L_x\otimes {\rm id}_\g)(({\rm id}_\h\otimes \psi)r_\h-(\phi\otimes {\rm id}_\g)r_\g)+({\rm id}_\g\otimes \psi\ad_x)(( \psi\otimes {\rm id}_\h)r_\h-({\rm id}_\g\otimes\phi)r_\g)=0.
\end{eqnarray*}

By a direct calculation,  $({\rm id}_\g\otimes\phi)\alpha_{r_\g}=( \psi\otimes {\rm id}_\h)\alpha_{r_\h}\phi$  if and only if for any $x\in\g$,
\begin{eqnarray*}
( L_x\otimes {\rm id}_\g+{\rm id}_\h\otimes\ad_{\phi(x)})(({\rm id}_\g\otimes\phi)r_\g-( \psi\otimes {\rm id}_\h)r_\h))=0,
\end{eqnarray*}
and
$(\phi\otimes {\rm id}_\g)\alpha_{r_\g}=({\rm id}_\h\otimes  \psi)\alpha_{r_\h}\phi$ if and only if 
\begin{eqnarray*}
( L_x\otimes {\rm id}_\g+{\rm id}_\h\otimes\ad_{\phi(x)})((\phi\otimes{\rm id}_\g)r_\g-( {\rm id}_\h\otimes\psi)r_\h))=0.
\end{eqnarray*}

 Therefore,  $(\phi,  \psi)$ is a coherent homomorphism of pre-Lie bialgebras from $(\g, \g^\ast, \alpha_{r_\g}, \beta_\g)$ to  $(\h, \h^\ast, \alpha_{r_\h}, \beta_\h)$.
\end{proof}

\section{Cohomology and deformations of symmetrical classical $\frks$-matrices via relative Rota-Baxter operators }\label{sec:def-phase}
In this section, we first introduce the notion of weak homomorphism between symmetric classical $\frks$-matrices, which can induce an equivalence between categories of symmetric classical $\frks$-matrices and their corresponding relative Rota-Baxter operators. Furthermore, we use the graded Lie algebra for relative Rota-Baxter operators to give the graded Lie algebra for symmetric classical $\frks$-matrices. At last, we  give the cohomology of symmetric classical $\frks$-matrices and apply it to study their deformations.

\subsection{Homomorphism of symmetric $\frks$-matrices and relative Rota-Baxter oprators}
Let $\g$ be a vector space.  
For any $r\in\Sym^2(\g)$, the linear map $r^\sharp:\g^*\longrightarrow \g$ is given by
$\langle r^\sharp(a),b\rangle=r(a,b),$ for all $a,b\in\g^*$.  We say that $r\in\Sym^2(\g)$ is {\bf invertible}, if the linear map $r^\sharp$ is an isomorphism.

\begin{defi}
 A {\bf pseudo-Hessian  structure} on a pre-Lie algebra $(\g,\cdot_\g)$  is a symmetric nondegenerate $2$-cocycle $\frkB\in\Sym^2(\g^*)$.  More precisely,
$$\frkB(x\cdot_\g y,z)-\frkB(x,y\cdot_\g z)=\frkB(y\cdot_\g x,z)-\frkB(y,x\cdot_\g z),\quad \forall~x,y,z\in\g. $$
A pre-Lie algebra equipped with a pseudo-Hessian   structure  is called  a {\bf pseudo-Hessian pre-Lie algebra},
and denoted by $(\g,\cdot_\g,\frkB)$.
\end{defi}

\begin{pro}\label{pro:Hessian1}{\rm(\cite{Left-symmetric
bialgebras})} Let $(\frkg,\cdot_\frkg)$ be a pre-Lie algebra and
$r$ be an invertible classical $\frks$-matrix.  Define $\frkB\in \Sym^2(\g^*)$
by
\begin{equation}\label{eq:rB}
\frkB(x,y)=\langle x,(r^\sharp)^{-1}(y)\rangle,\quad \forall~x,y\in\g.
\end{equation}
Then $(\g,\cdot_\g,\frkB)$ is a pseudo-Hessian pre-Lie algebra.
\end{pro}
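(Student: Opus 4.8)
The plan is to verify that $\frkB$ satisfies the three requirements of a pseudo-Hessian structure: it is symmetric, nondegenerate, and a $2$-cocycle for $\dM^T$. Throughout I would exploit the invertibility of $r^\sharp$ to parametrize elements of $\g$ as $x=r^\sharp(\alpha)$, $y=r^\sharp(\beta)$, $z=r^\sharp(\gamma)$ with $\alpha,\beta,\gamma\in\g^*$; as $\alpha,\beta,\gamma$ range over $\g^*$ these exhaust all triples in $\g$. Symmetry is then immediate: $\frkB(x,y)=\langle r^\sharp(\alpha),\beta\rangle=r(\alpha,\beta)$, which equals $r(\beta,\alpha)=\frkB(y,x)$ since $r\in\Sym^2(\g)$. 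For nondegeneracy, if $\frkB(x,y)=0$ for all $y\in\g$ then $\langle x,(r^\sharp)^{-1}(y)\rangle=0$ for all $y$, and since $(r^\sharp)^{-1}$ is onto $\g^*$ this forces $x=0$.

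The substance of the proof is the cocycle condition. First I would rewrite the stated identity, using bilinearity and the definition $[x,y]_\g=x\cdot_\g y-y\cdot_\g x$, into the equivalent form $\frkB([x,y]_\g,z)=\frkB(x,y\cdot_\g z)-\frkB(y,x\cdot_\g z)$. Substituting $x=r^\sharp(\alpha)$, $y=r^\sharp(\beta)$, $z=r^\sharp(\gamma)$ and using both the symmetry of $\frkB$ and the identity $\langle r^\sharp(\xi),\eta\rangle=\langle r^\sharp(\eta),\xi\rangle$ (itself a restatement of $r\in\Sym^2(\g)$) to move $r^\sharp$ off the covectors, every term becomes a pairing of an element of $\g$ against one of $\alpha,\beta,\gamma$. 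The cocycle condition then reduces to
\[
\langle[r^\sharp(\alpha),r^\sharp(\beta)]_\g,\gamma\rangle=\langle r^\sharp(\beta)\cdot_\g r^\sharp(\gamma),\alpha\rangle-\langle r^\sharp(\alpha)\cdot_\g r^\sharp(\gamma),\beta\rangle,\qquad \forall\,\alpha,\beta,\gamma\in\g^*.
\]

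To finish I would identify this identity with the vanishing of $[r,r]$. Writing $r=\sum_i a_i\otimes b_i$, so that $r^\sharp(\alpha)=\sum_i\langle\alpha,a_i\rangle b_i=\sum_i\langle\alpha,b_i\rangle a_i$, I would pair each summand in \eqref{S-equation1} against $\alpha\otimes\beta\otimes\gamma$ and reassemble: the term $-r_{12}\cdot_\g r_{13}$ produces $-\langle r^\sharp(\beta)\cdot_\g r^\sharp(\gamma),\alpha\rangle$, the term $r_{12}\cdot_\g r_{23}$ produces $\langle r^\sharp(\alpha)\cdot_\g r^\sharp(\gamma),\beta\rangle$, and $[r_{13},r_{23}]_\g$ produces $\langle[r^\sharp(\alpha),r^\sharp(\beta)]_\g,\gamma\rangle$. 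Hence $\langle[r,r],\alpha\otimes\beta\otimes\gamma\rangle$ equals the left-hand side minus the right-hand side of the displayed identity, so $[r,r]=0$ delivers exactly the cocycle condition, and by invertibility of $r^\sharp$ this holds for all $x,y,z\in\g$.

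I expect the only delicate point to be this last bookkeeping step: one must track precisely which tensor slot of each $r_{ij}$ is fed to which covector and repeatedly invoke the symmetry $\sum_i a_i\otimes b_i=\sum_i b_i\otimes a_i$ to recognize the resulting sums as $r^\sharp$ applied to $\alpha,\beta,\gamma$. As a conceptual check that makes this less error-prone, the displayed identity is equivalent to $r^\sharp$ being a relative Rota-Baxter operator on $\g^c$ with respect to the representation $(\g^*;L^*)$ (the dual of the regular representation $L$), which aligns this computation with the relative Rota-Baxter viewpoint emphasized elsewhere in the paper.
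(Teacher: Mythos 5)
Your proof is correct, but there is no in-paper argument to compare it with: the paper states Proposition \ref{pro:Hessian1} with a citation to \cite{Left-symmetric bialgebras} and gives no proof, so your verification stands as a self-contained one. The symmetry and nondegeneracy steps are immediate as you say, and your reduction of the cocycle condition $\dM^T\frkB=0$ (after the substitution $x=r^\sharp(\alpha)$, $y=r^\sharp(\beta)$, $z=r^\sharp(\gamma)$, legitimate by invertibility) to
\[
\langle[r^\sharp(\alpha),r^\sharp(\beta)]_\g,\gamma\rangle=\langle r^\sharp(\beta)\cdot_\g r^\sharp(\gamma),\alpha\rangle-\langle r^\sharp(\alpha)\cdot_\g r^\sharp(\gamma),\beta\rangle
\]
is exact. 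Two remarks on how your route meshes with the paper's machinery. First, this displayed identity is precisely the relative Rota-Baxter equation \eqref{eq:defiO} for $T=r^\sharp$, $\rho=L^*$, paired against $\gamma$ (using $\langle r^\sharp(L^*_x\alpha),\gamma\rangle=-\langle\alpha,x\cdot_\g r^\sharp(\gamma)\rangle$, which needs the symmetry of $r$); so your final bookkeeping step is in substance an independent proof of Propositions \ref{pro:morphism} and \ref{pro:LSBi-H}, and you could have shortened the argument by invoking those results, which the paper does state. Second, your term-by-term pairing of \eqref{S-equation1} against $\alpha\otimes\beta\otimes\gamma$ is correct as the paper defines $[r,r]$: the three terms give $-\langle r^\sharp(\beta)\cdot_\g r^\sharp(\gamma),\alpha\rangle$, $+\langle r^\sharp(\alpha)\cdot_\g r^\sharp(\gamma),\beta\rangle$, and $+\langle[r^\sharp(\alpha),r^\sharp(\beta)]_\g,\gamma\rangle$, so $[r,r](\alpha,\beta,\gamma)$ is your left-hand side minus your right-hand side. (This comes out with the opposite overall sign to the convention recorded in Proposition \ref{pro:morphism}, but the discrepancy is immaterial for your purposes, since only the vanishing of $[r,r]$ is used.)
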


\begin{thm}{\rm(\cite{Left-symmetric bialgebras})}
Let $(\frkg,\cdot_\frkg)$ be a pre-Lie algebra and $r$ a symmetric classical $\frks$-matrix.  Then $(\g^*,\cdot_r)$ is a pre-Lie algebra, where the multiplication $\cdot_r:\otimes ^2\g^*\longrightarrow\g^*$ is  given by
\begin{equation}\label{eq:pre-bia}
a\cdot_{r}b=\ad^*_{r^\sharp(a)}b-R^*_{r^\sharp(b)}a,\quad a,b\in\g^*.
\end{equation}
Furthermore, $r^\sharp$ is a pre-Lie algebra homomorphism from $(\g^*,\cdot_r)$ to $(\frkg,\cdot_\frkg)$.
\end{thm}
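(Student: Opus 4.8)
The plan is to prove both assertions simultaneously by recognizing $r^\sharp$ as a relative Rota-Baxter operator on the pre-Lie algebra $(\g,\cdot_\g)$ itself, with respect to the dual of the regular representation $(\g^*;\ad^*,-R^*)$ recalled in Section~\ref{sec:pre}. The point is that the homomorphism claim $r^\sharp(\alpha\cdot_r\beta)=r^\sharp(\alpha)\cdot_\g r^\sharp(\beta)$, after substituting the formula \eqref{eq:pre-bia}, is exactly the operator identity
\begin{equation}\label{eq:plan-RBO}
r^\sharp(\alpha)\cdot_\g r^\sharp(\beta)=r^\sharp\big(\ad^*_{r^\sharp(\alpha)}\beta-R^*_{r^\sharp(\beta)}\alpha\big),\qquad\alpha,\beta\in\g^*.
\end{equation}
Thus the two statements are not independent: once \eqref{eq:plan-RBO} is established, the pre-Lie structure on $\g^*$ should follow formally, in the same way that a relative Rota-Baxter operator on a Lie algebra produces a pre-Lie algebra. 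I would therefore first deduce \eqref{eq:plan-RBO} from $[r,r]=0$, and then derive that $\cdot_r$ is pre-Lie.

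For the first step, write $r=\sum_i a_i\otimes b_i$ and exploit $r\in\Sym^2(\g)$, i.e. $\sum_i a_i\otimes b_i=\sum_i b_i\otimes a_i$, so that $r^\sharp(\xi)=\sum_i\langle\xi,a_i\rangle b_i=\sum_i\langle\xi,b_i\rangle a_i$ for all $\xi\in\g^*$. The decisive move is to contract the identity $[r,r]=0$ from \eqref{S-equation1} against $\alpha$ in the \emph{first} leg and $\beta$ in the \emph{third} leg, leaving the middle leg free. Using the symmetry of $r$ and the defining relations of $L^*,R^*,\ad^*$, I expect the three summands $-r_{12}\cdot_\g r_{13}$, $r_{12}\cdot_\g r_{23}$, $[r_{13},r_{23}]_\g$ to read off respectively as $r^\sharp(R^*_{r^\sharp(\beta)}\alpha)$, $r^\sharp(\alpha)\cdot_\g r^\sharp(\beta)$ and $-r^\sharp(\ad^*_{r^\sharp(\alpha)}\beta)$, whose vanishing sum is precisely \eqref{eq:plan-RBO}. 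The main obstacle, and the thing to get right, is the choice of contraction: pairing the first two legs instead only reproduces the weaker commutator identity $[r^\sharp(\alpha),r^\sharp(\beta)]_\g=r^\sharp(L^*_{r^\sharp(\alpha)}\beta-L^*_{r^\sharp(\beta)}\alpha)$, which is the antisymmetrization of \eqref{eq:plan-RBO}; it is the first-and-third contraction, together with the alternative expression $r^\sharp(\xi)=\sum_i\langle\xi,b_i\rangle a_i$ coming from symmetry, that upgrades this to the full non-antisymmetrized identity.

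For the second step I would check directly that the associator of $\cdot_r$ is symmetric in its first two arguments. Abbreviating $A=r^\sharp(\alpha)$, $B=r^\sharp(\beta)$, $C=r^\sharp(\gamma)$ and $\rho=\ad^*$, $\mu=-R^*$, I would expand $(\alpha\cdot_r\beta)\cdot_r\gamma-\alpha\cdot_r(\beta\cdot_r\gamma)$ and use \eqref{eq:plan-RBO} to replace every occurrence of $r^\sharp(\alpha\cdot_r\beta)$ by $A\cdot_\g B$. Comparing with the same expression after the swap $\alpha\leftrightarrow\beta$, the terms acting on $\gamma$ cancel because $\rho=\ad^*$ is a representation of the sub-adjacent Lie algebra $\g^c$, so that $\rho(A\cdot_\g B-B\cdot_\g A)=\rho([A,B]_\g)=[\rho(A),\rho(B)]$; the terms acting on $\alpha$ and on $\beta$ cancel upon applying the compatibility \eqref{representation condition 2} of $(\g^*;\rho,\mu)$ with the pairs $(x,y)=(A,C)$ and $(B,C)$. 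Since no further use of $[r,r]=0$ enters here, this shows that $(\g^*,\cdot_r)$ is a pre-Lie algebra and, together with \eqref{eq:plan-RBO}, that $r^\sharp$ is a pre-Lie algebra homomorphism onto $(\g,\cdot_\g)$.
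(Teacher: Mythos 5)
Your proof is correct, and both of its steps check out. Contracting \eqref{S-equation1} against $\alpha$ in the first slot and $\beta$ in the third slot, and using the two expressions $r^\sharp(\xi)=\sum_i\langle\xi,a_i\rangle b_i=\sum_i\langle\xi,b_i\rangle a_i$ furnished by the symmetry of $r$, the three terms do contract to $r^\sharp\bigl(R^*_{r^\sharp(\beta)}\alpha\bigr)$, $r^\sharp(\alpha)\cdot_\g r^\sharp(\beta)$ and $-r^\sharp\bigl(\ad^*_{r^\sharp(\alpha)}\beta\bigr)$ respectively (the sign of the first term works out because the minus in front of $r_{12}\cdot_\g r_{13}$ cancels the minus in the definition of $R^*$), and your associator computation for $\cdot_r$ uses nothing beyond the resulting operator identity, the fact that $\ad^*$ is a representation of $\g^c$, and the compatibility \eqref{representation condition 2}. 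Bear in mind, though, that the paper does not prove this theorem at all: it quotes it from \cite{Left-symmetric bialgebras}, so the only comparison available is with the paper's adjacent Rota--Baxter machinery, and relative to that your route is genuinely different. Propositions \ref{pro:morphism} and \ref{pro:LSBi-H} interpret $r^\sharp$ as a relative Rota--Baxter operator on the \emph{Lie} algebra $\g^c$ with respect to $(\g^*;L^*)$, which, as you yourself observe, only sees the contraction $[r,r](\alpha,\beta,\cdot)$ and hence only the skew-symmetric bracket \eqref{eq:commutator-H} and the product $\ast_r=L^*_{r^\sharp(\cdot)}$ appearing in Section \ref{sec:Coh-def}. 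Your argument instead interprets $r^\sharp$ as a relative Rota--Baxter ($\huaO$-)operator on the \emph{pre-Lie} algebra $(\g,\cdot_\g)$ with respect to the bimodule $(\g^*;\ad^*,-R^*)$, the dual of the regular representation, and this stronger, non-skew-symmetrized identity is exactly what produces $\cdot_r$ and the homomorphism claim simultaneously. The Lie-level viewpoint suffices for the paper's Maurer--Cartan and cohomology constructions; your pre-Lie-level viewpoint is the one that actually proves the quoted theorem, and it has the additional merit of isolating where $[r,r]=0$ enters (only in establishing the operator identity) from the purely formal bimodule computation, which is valid for any such operator. One caveat for a self-contained write-up: the paper defines relative Rota--Baxter operators only for Lie algebras, so you should state the pre-Lie (bimodule) version of the definition before invoking it; your second step is precisely the proof that this notion does what you need.
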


The  sub-adjacent Lie algebra  of the pre-Lie algebra $(\g^*,\cdot_r)$ is $(\g^*,[\cdot,\cdot]_{r})$, where $[\cdot,\cdot]_{r}$ is given by
\begin{equation}\label{eq:commutator-H}
  [a,b]_{r}=L^*_{r^\sharp(a)}b-L^*_{r^\sharp(b)}a,\quad \forall~a,b\in\g^*.
\end{equation}

Furthermore, by a direct calculation, we have
\begin{pro}\label{pro:morphism}
For all $a,b\in\g^*$, we have
\begin{eqnarray}
\label{homo2}r^\sharp({[a,b]}_{r})-[r^\sharp(a), r^\sharp(b)]_\g&=&[[ r,r]](a,b,\cdot).
\end{eqnarray}
\end{pro}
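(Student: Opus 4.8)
The plan is to verify the identity by direct expansion in a fixed representation $r=\sum_i x_i\otimes y_i\in\Sym^2(\g)$, exploiting the symmetry of $r$ at the one place where it is genuinely needed. Recall that for such an $r$ one has $r^\sharp(\alpha)=\sum_i\langle\alpha,x_i\rangle y_i$, and that symmetry means $\sum_i x_i\otimes y_i=\sum_i y_i\otimes x_i$, equivalently $\sum_i f(x_i)g(y_i)=\sum_i f(y_i)g(x_i)$ for all $f,g\in\g^*$. First I would contract the defining expression \eqref{S-equation1} for $[r,r]$ against $(\alpha,\beta,\cdot)$, splitting it into the three contributions coming from $-r_{12}\cdot_\g r_{13}$, from $r_{12}\cdot_\g r_{23}$, and from $[r_{13},r_{23}]_\g$.

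The term $[r_{13},r_{23}]_\g=\sum_{i,j}x_i\otimes x_j\otimes[y_i,y_j]_\g$ contracts immediately to $\sum_{i,j}\langle\alpha,x_i\rangle\langle\beta,x_j\rangle[y_i,y_j]_\g=[r^\sharp(\alpha),r^\sharp(\beta)]_\g$, which precisely accounts for the bracket term on the left-hand side of \eqref{homo2}; no symmetry is needed here. It then remains to identify $r^\sharp([\alpha,\beta]_r)$ with the contraction of the two remaining terms. Using $[\alpha,\beta]_r=L^*_{r^\sharp(\alpha)}\beta-L^*_{r^\sharp(\beta)}\alpha$ from \eqref{eq:commutator-H}, the defining relation $\langle L^*_z\gamma,w\rangle=-\langle\gamma,z\cdot_\g w\rangle$, and the formula for $r^\sharp$, I would expand $r^\sharp([\alpha,\beta]_r)$ into two double sums. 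The sum arising from $L^*_{r^\sharp(\alpha)}\beta$ is matched, after relabelling the summation indices, with the contraction of $r_{12}\cdot_\g r_{23}=\sum_{i,j}x_i\otimes(y_i\cdot_\g x_j)\otimes y_j$.

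The crux is the sum arising from $L^*_{r^\sharp(\beta)}\alpha$: as written it pairs $\alpha$ and $\beta$ against $r$ in the ``transposed'' order and does not literally coincide with the contraction of $r_{12}\cdot_\g r_{13}=\sum_{i,j}(x_i\cdot_\g x_j)\otimes y_i\otimes y_j$. Here I would invoke the symmetry of $r$ to interchange $x_i\leftrightarrow y_i$ in the relevant tensor leg, which turns this sum into exactly the contraction of $r_{12}\cdot_\g r_{13}$. This is the only step that uses $r\in\Sym^2(\g)$ rather than a general $r\in\g\otimes\g$, and it is where I expect the main (purely bookkeeping) difficulty to lie: one must track the pairings and signs carefully and apply the transposition to precisely the correct factor, since for a non-symmetric $r$ the identity would genuinely fail. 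A convenient way to make this transparent is to work in a basis $\{e_a\}$ with $r=\sum_{a,b} r^{ab}\,e_a\otimes e_b$ (so $r^{ab}=r^{ba}$) and structure constants $e_a\cdot_\g e_b=\sum_c C^c_{ab}e_c$; then the whole symmetry input reduces to the single equality $r^{ab}=r^{ba}$ applied to one factor, and the three contributions to \eqref{S-equation1} become directly comparable to the three pieces of $r^\sharp([\alpha,\beta]_r)-[r^\sharp(\alpha),r^\sharp(\beta)]_\g$.

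Assembling the three matched pieces yields the claimed identity \eqref{homo2}. Conceptually, this identity measures the failure of $r^\sharp$ to be a Lie algebra homomorphism from $(\g^*,[\cdot,\cdot]_r)$ to the sub-adjacent Lie algebra $\g^c$: the obstruction is exactly $[r,r](\alpha,\beta,\cdot)$, so that when $r$ is an $\frks$-matrix (i.e.\ $[r,r]=0$) one recovers the homomorphism property already implicit in the preceding theorem, and $r^\sharp$ becomes a relative Rota-Baxter operator on $\g^c$.
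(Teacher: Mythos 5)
Your overall strategy---expanding both sides against a fixed symmetric representation $r=\sum_i x_i\otimes y_i$, matching the three pieces of \eqref{S-equation1}, and invoking the symmetry of $r$ exactly once---is indeed the ``direct calculation'' the paper alludes to (it gives no further detail), and your structural analysis of \emph{where} symmetry enters is correct. The gap is in the sign bookkeeping, which you rightly call the crux but then get wrong, uniformly in all three matchings. It is already visible at the step you call immediate: the left-hand side of \eqref{homo2} contains $-[r^\sharp(\alpha),r^\sharp(\beta)]_\g$, whereas the summand $+[r_{13},r_{23}]_\g$ of \eqref{S-equation1} contributes $+[r^\sharp(\alpha),r^\sharp(\beta)]_\g$ to the right-hand side. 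Two terms carrying the same sign on opposite sides of an equation do not ``account for'' each other---moved to one side they add, they do not cancel---so the reduction ``it remains to identify $r^\sharp([\alpha,\beta]_r)$ with the contraction of the two remaining terms'' is not legitimate as stated.

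The two remaining matchings are off by a sign as well. With your own conventions ($r^\sharp(\alpha)=\sum_i\langle\alpha,x_i\rangle y_i$, $\alpha$ contracted into the first leg, $\beta$ into the second), one finds
\begin{align*}
r^\sharp\bigl(L^*_{r^\sharp(\alpha)}\beta\bigr) &= -\sum_{i,j}\langle\alpha,x_i\rangle\langle\beta,y_i\cdot_\g x_j\rangle\,y_j \;=\; -(r_{12}\cdot_\g r_{23})(\alpha,\beta,\cdot),\\
-r^\sharp\bigl(L^*_{r^\sharp(\beta)}\alpha\bigr) &= \sum_{i,j}\langle\beta,x_i\rangle\langle\alpha,y_i\cdot_\g x_j\rangle\,y_j
=\sum_{i,j}\langle\beta,y_i\rangle\langle\alpha,x_i\cdot_\g x_j\rangle\,y_j \;=\; +(r_{12}\cdot_\g r_{13})(\alpha,\beta,\cdot),
\end{align*}
the last equality being the one use of symmetry. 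So the honest outcome of your computation is
$r^\sharp([\alpha,\beta]_r)-[r^\sharp(\alpha),r^\sharp(\beta)]_\g=-[r,r](\alpha,\beta,\cdot)=[r,r](\beta,\alpha,\cdot)$,
where the final equality uses that for symmetric $r$ the tensor $[r,r]$ lies in $\wedge^2\g\otimes\g$, i.e.\ is skew in its first two arguments. A two-dimensional sanity check confirms this: take $e_1\cdot_\g e_1=e_2$ (all other products zero) and $r=e_1\otimes e_1$; writing $\alpha_i=\langle\alpha,e_i\rangle$, the left-hand side of \eqref{homo2} is $(\alpha_2\beta_1-\alpha_1\beta_2)e_1$, while the contraction of $[r,r]=-e_2\otimes e_1\otimes e_1+e_1\otimes e_2\otimes e_1$ against $(\alpha,\beta,\cdot)$ is $(\alpha_1\beta_2-\alpha_2\beta_1)e_1$. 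Consequently, to prove \eqref{homo2} as displayed you must either adopt the contraction convention that feeds $\beta$ into the first leg of $[r,r]$ (legitimate precisely because of the skew-symmetry, and evidently what is intended), or equivalently prove the statement in the form $[r^\sharp(\alpha),r^\sharp(\beta)]_\g-r^\sharp([\alpha,\beta]_r)=[r,r](\alpha,\beta,\cdot)$; in either case all three of your matchings must be flipped. The downstream consequence, Proposition \ref{pro:LSBi-H}, is insensitive to this sign, but a proof of the identity itself is not, and as written your plan would terminate in the negative of the displayed equation.
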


Note that $L^*:\g\rightarrow \gl(\g^*)$  is a representation of the sub-adjacent Lie algebra  $\g^c$ on the dual space   $\g^*$.  Thus, by Proposition \ref{pro:morphism}, for $r\in\Sym^2(\g)$, we have
\begin{pro}\label{pro:LSBi-H}
$r$ is a symmetric classical $\frks$-matrix on a pre-Lie algebra $(\g,\cdot_\g)$ if and only if $r^\sharp:\g^*\rightarrow \g $ is a relative Rota-Baxter operator on the Lie algebra $\g^c$ with respect to the representation $(\g^*;L^*)$.
\end{pro}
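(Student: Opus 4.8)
The plan is to translate the defining equation of a relative Rota-Baxter operator directly into the $S$-equation by means of the morphism identity \eqref{homo2}, so that the proposition becomes a bookkeeping consequence of results already established. First I would spell out what it means for $r^\sharp:\g^*\to\g$ to be a relative Rota-Baxter operator on $\g^c$ with respect to $(\g^*;L^*)$. Since $L^*$ is a representation of $\g^c$ on $\g^*$ (as recalled just before the statement), the notion applies, and \eqref{eq:defiO} with $T=r^\sharp$, $V=\g^*$ and $\rho=L^*$ reads
\[
[r^\sharp(\alpha),r^\sharp(\beta)]_\g=r^\sharp\big(L^*_{r^\sharp(\alpha)}\beta-L^*_{r^\sharp(\beta)}\alpha\big),\quad\forall\,\alpha,\beta\in\g^*.
\]

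Next I would recognize the argument of $r^\sharp$ on the right-hand side as the bracket $[\alpha,\beta]_r$ defined in \eqref{eq:commutator-H}. Thus the relative Rota-Baxter condition is equivalent to $r^\sharp([\alpha,\beta]_r)-[r^\sharp(\alpha),r^\sharp(\beta)]_\g=0$ for all $\alpha,\beta\in\g^*$, which is precisely the vanishing of the left-hand side of \eqref{homo2}. Invoking Proposition \ref{pro:morphism}, this left-hand side equals $[r,r](\alpha,\beta,\cdot)$; hence $r^\sharp$ is a relative Rota-Baxter operator if and only if $[r,r](\alpha,\beta,\cdot)=0$ for all $\alpha,\beta\in\g^*$.

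Finally I would pass from this pointwise vanishing to the genuine vanishing $[r,r]=0$. Since $[r,r]\in\wedge^2\g\otimes\g$ and $\g$ is finite dimensional, the assignment $(\alpha,\beta)\mapsto[r,r](\alpha,\beta,\cdot)$ is the image of $[r,r]$ under the natural isomorphism $\wedge^2\g\otimes\g\cong\Hom(\wedge^2\g^*,\g)$ obtained by pairing the first two legs against $\alpha\wedge\beta$; because decomposable elements span $\wedge^2\g^*$ and the pairing of $\wedge^2\g$ with $\wedge^2\g^*$ is nondegenerate, this map is identically zero exactly when $[r,r]=0$. As $r\in\Sym^2(\g)$ is a standing hypothesis in the statement, the condition $[r,r]=0$ is by definition the assertion that $r$ is an $\frks$-matrix, so both implications follow at once. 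I do not expect any serious obstacle here: the entire analytic content is carried by the identity \eqref{homo2}, and the only step deserving an explicit line of justification is the nondegeneracy argument converting pointwise vanishing into $[r,r]=0$.
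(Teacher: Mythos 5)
Your proposal is correct and follows exactly the route the paper intends: the paper derives this proposition directly from Proposition \ref{pro:morphism} together with the observation that $L^*$ is a representation of $\g^c$ on $\g^*$, which is precisely your chain of identifications. Your write-up merely makes explicit the details the paper leaves implicit (unfolding \eqref{eq:defiO}, matching the bracket \eqref{eq:commutator-H}, and the nondegeneracy argument passing from $[r,r](\alpha,\beta,\cdot)=0$ for all $\alpha,\beta\in\g^*$ to $[r,r]=0$), all of which are sound.
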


The relations between coherent homomorphisms of symmetric classical $\frks$-matrices and homomorphisms between
the relative Rota-Baxter operators are given as following. 
\begin{thm} \label{pp:oprm}
Let $r_\g,\;r_\h$ be symmetric classical $\frks$-matrices in
pre-Lie algebras $\g$ and $\h$, respectively. Let $\phi:\g\rightarrow \h$ be a
pre-Lie algebra homomorphism and $\psi:\h\rightarrow \g$ be a linear map. If 
$(\phi, \psi)$ is a coherent homomorphism of classical $\frks$-matrices from $r_\g$
to $r_\h$, then $(\phi,\psi^*)$ is a homomorphism between
the corresponding relative Rota-Baxter operators $r_\g^\sharp$ and
$r_\h^\sharp$. 
\emptycomment{Conversely, if $(\phi,\psi^*)$ is a homomorphism between
the corresponding relative Rota-Baxter operators $r_\g^\sharp$ and
$r_\h^\sharp$ and $\psi$ satisfies \eqref{eq:the polarization 3}, then $(\phi, \psi)$ is a coherent homomorphism of classical $\frks$-matrices from $r_\g$
to $r_\h$.}

This correspondence defines a functor from the category ${\bf
SCs}$ of symmetric classical $\frks$-matrices to the category ${\bf SRB}$ of
relative Rota-Baxter operators on the sub-adjacent Lie algebras associated to the representations $L^*$ satisfying $\langle r^\sharp(a),b\rangle=\langle r^\sharp(b),a\rangle$ for $a,b\in \g^*$ or $\h^*$.
\end{thm}
\begin{proof}
Since $\phi:\g\rightarrow \h$ is a pre-Lie algebra homomorphism, then $\psi$ is a Lie algebra homomorphism from $\g^c$ to $\h^c$. For $x\in\g,y\in\h,a\in \g^*$, we have
\begin{eqnarray*}
\langle \psi^*L_x^*a, y\rangle&=&\langle
L_x^*a,\psi(y)\rangle=-\langle
a,x\cdot_\g\psi(y)\rangle,\\
\langle L^*_{\phi(x)}\psi^*(a),y\rangle&=&-\langle
\psi^*(a),\phi(x)\cdot_\h y\rangle=-\langle
a,\psi(\phi(x)\cdot_\h y)\rangle.
\end{eqnarray*}
Hence $\psi^*L_x^*a=L^*_{\phi(x)}\psi^*(a)$ if and
only if $x\cdot_\g\psi(y)=\psi(\phi(x)\cdot_\h y)$.

Let $a\in\g^*,b\in \h^*$. Then we have
\begin{eqnarray*}
\langle r_\h^\sharp\psi^*(a),b\rangle&=&\langle r_\h,
\psi^*(a)\otimes b\rangle=\langle r_\h,b\otimes
\psi^*(a)\rangle
=\langle ({\id}_\h\otimes \psi)(r_\h),b\otimes a\rangle,\\
\langle \phi r_\g^\sharp(a),b\rangle&=&\langle r_\g,
a\otimes \phi^*(b)\rangle=\langle r_\g,\phi^*(b)\otimes
a\rangle=\langle (\phi\otimes {\id}_\g)(r_\g),b\otimes
a\rangle.
\end{eqnarray*}
Hence $r_\h^\sharp\psi^*(b)=\phi r_\g^\sharp(a)$ if and only
if $(\psi\otimes {\id}_\h)(r_\h)=({\id}_\g\otimes \phi)(r_\g)$, and if
and only if  $({\id}_\h\otimes \psi)(r_\h)=(\phi\otimes
{\id}_\g)(r_\g)$. Therefore, $(\phi,\psi^*)$ is a homomorphism between
the corresponding relative Rota-Baxter operators $r_\g^\sharp$ and
$r_\h^\sharp$.

The rest can be obtained directly. 
\end{proof}

Motivated by Theorem \ref{pp:oprm}, we introduce the weak homomorphism between symmetric classical  $\frks$-matrices, which can induce the equivalence between these two categories.
\begin{defi}\label{defi:weak homomorphism of s-matrix}
Let $\g, \h$ be pre-Lie algebras and $r_\g, r_\h$ be symmetric classical  $\frks$-matrices in $\g$ and $\h$  respectively.  A {\bf weak homomorphism} from $r_\g$ to $r_\h$ consists of a  Lie algebra homomorphism $\phi:\g^c\rightarrow\h^c$ and a linear map $ \psi:\h\rightarrow\g$ satisfying
\begin{eqnarray}
( \psi\otimes {\rm id}_\h)r_\h&=&({\rm id}_\g\otimes\phi)r_\g,\label{eq:weak 1}\\
\psi(\phi(x)\cdot_\h y)&=&x\cdot_\g\psi(y),\quad x\in \g,y\in\h.\label{eq:weak 2} 
\end{eqnarray}
\end{defi}

It is obvious that a coherent homomorphism from symmetric classical  $\frks$-matrices $r_\g$ to $r_\h$ is a weak homomorphism from $r_\g$ to $r_\h$.

The following proposition shows that the notion of homomorphisms of
relative Rota-Baxter operators is compatible with the weak homomorphism between symmetric classical $\frks$-matrices. 
\begin{pro} \label{pro:weak-homomorphsim}
Let $r_\g,\;r_\h$ be symmetric classical $\frks$-matrices in
pre-Lie algebras $\g$ and $\h$, respectively. Let $\phi:\g\rightarrow \h$ and $\psi:\h\rightarrow \g$ be linear maps. Then  
$(\phi, \psi)$ is a weak homomorphism of symmetric classical $\frks$-matrices from $r_\g$
to $r_\h$ if and only if $(\phi,\psi^*)$ is a homomorphism between
the corresponding relative Rota-Baxter operators $r_\g^\sharp$ and $r_\h^\sharp$. 

This correspondence defines an equivalence from the category ${\bf
SCs}$ of symmetric classical $\frks$-matrices to the category ${\bf SRB}$ of
relative Rota-Baxter operators on the sub-adjacent Lie algebras associated to the representations $L^*$ satisfying $\langle r^\sharp(a),b\rangle=\langle r^\sharp(b),a\rangle$ for $a,b\in \g^*$ or $\h^*$.
\end{pro}

\emptycomment{
By a direct calculation, we have
\begin{pro}
 $\phi:\g\rightarrow \h$ is a homomorphism (isomorphism) between two parak\"ahler Lie algebras $(\g,\g_+,\g_-,\omega_1)$ and $(\h,\h_+,\h_-,\omega_2)$ if and only if $\phi|_{\g_+}:\g_+\rightarrow\h_+$ is a homomorphism (isomorphism) between the corresponding  phase spaces.
\end{pro}}

\emptycomment{\begin{defi}
Let $((\g,[-,-]_\g),(\g^*,[-,-]_1),\omega_p)$ and $((\g,[-,-]_\g),(\g^*,[-,-]_2),\omega_p)$ be the phase spaces of the Lie algebra $(\g,[-,-]_\g)$.  A {\bf weak homomorphism} form $((\g,[-,-]_\g),(\g^*,[-,-]_2),\omega_p)$ to $((\g,[-,-]_\g),(\g^*,[-,-]_1),\omega_p)$ consists of a Lie algebra homomorphism $\phi:\g\rightarrow\g$ and a linear map $ \psi:\g\rightarrow\g$ such that $ \psi^*:(\g^*,[-,-]_2)\rightarrow (\g^*,[-,-]_1)$ is also a Lie algebra homomorphism and
\begin{equation}
 \psi^*({\rm pr}_{\g^*}[x,\alpha])={\rm pr}_{\g^*}[\phi(x), \psi^*(\alpha)],\quad x\in\g,\alpha\in\g^*,
\end{equation}
where ${\rm pr}_{\g^*}:\g\oplus\g^*\rightarrow\g$ is the projection from $\g\oplus\g^*$ to $\g^*$.

Furthermore,  if both $\phi$ and $ \psi$ are linear isomorphisms, then $(\phi, \psi)$ is called a {\bf weak isomorphism} from $((\g,[-,-]_\g),(\g^*,[-,-]_2),\omega_p)$ to $((\g,[-,-]_\g),(\g^*,[-,-]_1),\omega_p)$.
\end{defi}

The relations between isomorphisms and weak isomorphisms of phase spaces of the Lie algebras are given as following:
\begin{pro}
Let $(\g,(\g^*,[-,-]_1),\omega_p)$ and $(\g,(\g^*,[-,-]_2),\omega_p)$ be the phase spaces of the Lie algebra $(\g,[-,-]_\g)$.  Then $(\g,(\g^*,[-,-]_1),\omega_p)$ is isomorphic to $(\g,(\g^*,[-,-]_2),\omega_p)$  if  and only if there exists a Lie algebra isomorphism $\phi:\g\rightarrow\g$ such that $(\phi,\phi^{-1})$ is a weak isomorphism from $(\g,(\g^*,[-,-]_2),\omega_p)$ to $(\g,(\g^*,[-,-]_1),\omega_p)$.
\end{pro}
}

Let $\g$ be a pre-Lie algebra and $r\in  \Sym^2(\g)$ be a classical $\frks$-matrix.   Then $(\g^*,[\cdot,\cdot]_{r})$ is a Lie algebra, where the bracket $[\cdot,\cdot]_{r}$ is given by \eqref{eq:commutator-H}.  Furthermore, one has

\begin{pro}{\rm(\cite{Left-symmetric bialgebras})}\label{pro:para-kahler-phase-space}
 $({\g}\oplus {\g^*},[\cdot,\cdot]_p,\omega_p)$ is a symplectic Lie algebra, where the bracket $[\cdot,\cdot]_p$ is defined by
    \begin{eqnarray}\label{eq:parakahler bracket}
    [x+a,y+b]^r_p=[a,b]_{r}+L^*_xb-L^*_y a+\frkL^*_a y-\frkL^*_b x+[x,y]_\g,\quad x,y\in\g,a,b\in\g^*.
    \end{eqnarray}
   We call it the phase space of the Lie algebra $\g^c$ associated to the classical $\frks$-matrix $r$ and denote it by $(\g,{\g^*},\omega_p,r)$.
\end{pro}

\emptycomment{\begin{defi}
Let $(\g,{\g^*},\omega_p,r_1)$ and $(\g,{\g^*},\omega_p,r_2)$ be the phase spaces of the Lie algebra $\g^c$ associated to the symmetric classical $\frks$-matrices $r_1$ and $r_2$ on the pre-Lie algebra $\g$, respectively. A {\bf weak homomorphism} between them is a Lie algebra homomorphism
\begin{eqnarray*}
f:\g^c\bowtie{\g_{r_2}^\ast}^c\rightarrow\g^c\bowtie{\g_{r_1}^\ast}^c,
\end{eqnarray*}
 that restricts to Lie algebra homomorphisms
 \begin{eqnarray*}
f\mid_\g:\g^c\rightarrow\g^c, \quad f\mid_{\g^\ast}:{\g_{r_2}^\ast}^c\rightarrow{\g_{r_1}^\ast}^c.
 \end{eqnarray*}
\end{defi}}

\begin{pro}\label{eq:s-matrix-phase}
Let $\g$ be a pre-Lie algebra and $r_1,r_2\in  \Sym^2(\g)$ two classical $\frks$-matrices.  If $(\phi, \psi)$ is a weak homomorphism from $r_2$ to $r_1$, then $\phi+ \psi^\ast$ is a Lie algebra homomorphism from the Lie algebra $\g^c\bowtie{\g_{r_2}^\ast}^c$ to $\g^c\bowtie{\g_{r_1}^\ast}^c$.
\end{pro}
\begin{proof}
Since $\phi$ is a pre-Lie algebra homomorphism from $\g$ to $\g$, $\phi$ is a Lie algebra homomorphism from $\g^c$ to $\g^c$.  By \eqref{eq:weak 1} and \eqref{eq:weak 2}, we have
\begin{eqnarray*}
  \langle \psi^*[a,b]_{r_2},x\rangle &=&-\langle b, \psi(\phi(r_2^\sharp(a))\cdot_\g x)\rangle+\langle a, \varphi(\phi(r_2^\sharp(b))\cdot_\g x)\rangle\\
  &=& -\langle b, \psi(r_1^\sharp \psi^*(a)\cdot_\g x)\rangle+\langle a, \psi(r_1^\sharp\psi( \varphi^*(b))\cdot_\g x)\rangle\\
   &=&-\langle \psi^*b,r_1^\sharp \psi^*(a)\cdot_\g x\rangle+\langle \psi^*a,r_1^\sharp\psi( \psi^*(b))\cdot_\g x\rangle\\
   &=&\langle L_{r_1^\sharp \psi^*(a)} \psi^*b-L_{r_1^\sharp \psi^*(b)} \psi^*a,x\rangle\\
   &=&\langle[ \psi^*(a), \psi^*(b)]_{r_1},x\rangle,
\end{eqnarray*}
which implies that $ \psi^*$ is a Lie algebra homomorphism from $(\g^*,[\cdot,\cdot]_{r_2})$ to $(\g^*,[\cdot,\cdot]_{r_1})$.  Next, we show that $(\phi+ \psi^*)[x,a]_2=[\phi(x), \psi^*(a)]_1,$ which is equivalent to
\begin{eqnarray}
\label{eq:LA1}\phi(\frkL_a^* x)&=&\frkL^*_{ \psi^*(\alpha)}\phi(x),\\
\label{eq:LA2} \psi^*(L^*_xa)&=&L^*_{\phi(x)} \psi^*(a),\quad \forall~x\in\g,a\in \g^*.
\end{eqnarray}
One can check that \eqref{eq:LA1} follows from $\phi$ is a Lie algebra homomorphism, \eqref{eq:weak 1} and \eqref{eq:weak 2}. And  \eqref{eq:LA2} follows from \eqref{eq:weak 2}.
\end{proof}

\subsection{Cohomology and deformations of symmetric classical $\frks$-matrices}\label{sec:Coh-def}
In  this section, we use the graded Lie algebra for relative Rota-Baxter operators to give the graded Lie algebra whose Maurer-Cartan elements characterize classical $\frks$-matrices on pre-Lie algebras.

By Theorem \ref{pro:gla} and Proposition \ref{pro:LSBi-H}, we have
\begin{pro}\label{lem:graded Lie algbra}
Let $(\frkg,\cdot_\frkg)$ be a pre-Lie algebra and $r\in \Sym^2(\g)$.
\begin{itemize}
\item[{\rm (i)}]  $(\huaC^*(\g^*,\g):=\oplus_{k\geq0}\Gamma(\Hom(\wedge^{k}\g^*,\g)),\Courant{\cdot,\cdot})$ is a graded Lie algebra, where the bracket $\Courant{\cdot,\cdot}$ is given by \eqref{o-bracket} with $\rho=L^*$.
\item[{\rm (ii)}]  $r$ is a classical $\frks$-matrix on the pre-Lie algebra $\g$ if and only if $r^\sharp$ is a Maurer-Cartan element of the graded Lie algebra $(\huaC^*(\g^*,\g),\Courant{\cdot,\cdot})$.
\end{itemize}

\end{pro}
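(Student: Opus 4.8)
The plan is to recognize both assertions as direct specializations of the general theory of relative Rota-Baxter operators recalled in Section~\ref{sec:pre}, applied to the representation $(\g^*;L^*)$ of the sub-adjacent Lie algebra $\g^c$. As noted just before the statement, $L^*:\g^c\to\gl(\g^*)$ is a representation of the Lie algebra $\g^c$ on the dual space $\g^*$, because $L$ is a representation of $\g^c$ on $\g$ and dualizing a representation again yields a representation. This is the only structural input needed to set the machinery of Theorem~\ref{pro:gla} in motion.

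For part (i), I would simply invoke Theorem~\ref{pro:gla} with the vector space $V=\g^*$, the ambient Lie algebra taken to be $\g^c$, and the representation $\rho=L^*$. Theorem~\ref{pro:gla} asserts that $(\huaC^*(V,\g),\Courant{\cdot,\cdot})$ is a graded Lie algebra for \emph{any} representation $(V;\rho)$, so substituting this data yields at once that $(\huaC^*(\g^*,\g),\Courant{\cdot,\cdot})$ is a graded Lie algebra, with bracket given by \eqref{o-bracket} in which every occurrence of $\rho$ is replaced by $L^*$. No additional verification is required here.

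For part (ii), I would first record that $r^\sharp:\g^*\to\g$ is a linear map, hence an element of $\Hom(\wedge^{1}\g^*,\g)$, which is exactly the degree-$1$ component of $\huaC^*(\g^*,\g)$; thus it is a legitimate candidate for a Maurer-Cartan element, and since the graded Lie algebra carries the zero differential, the Maurer-Cartan equation \eqref{eq:mce} for $r^\sharp$ collapses to $\Courant{r^\sharp,r^\sharp}=0$. Now I chain two equivalences. By Theorem~\ref{pro:gla}, the Maurer-Cartan elements of $(\huaC^*(\g^*,\g),\Courant{\cdot,\cdot})$ are precisely the relative Rota-Baxter operators on $\g^c$ with respect to $(\g^*;L^*)$. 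By Proposition~\ref{pro:LSBi-H}, for $r\in\Sym^2(\g)$ the map $r^\sharp$ is such a relative Rota-Baxter operator if and only if $r$ is an $\frks$-matrix. Composing these gives that $r$ is an $\frks$-matrix if and only if $r^\sharp$ is a Maurer-Cartan element, which is the claim.

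Since the statement is assembled from two results already in hand, there is no genuine computational obstacle; the substantive content has been front-loaded into Proposition~\ref{pro:morphism}, which expresses the failure of $r^\sharp$ to be a morphism in terms of $[r,r]$, and into Proposition~\ref{pro:LSBi-H}. The only points demanding care are bookkeeping in nature: confirming that the hypothesis $r\in\Sym^2(\g)$ is precisely what is carried through Proposition~\ref{pro:LSBi-H}, and checking the degree convention so that $r^\sharp$ genuinely lands in degree $1$ and the Maurer-Cartan equation reduces to $\Courant{r^\sharp,r^\sharp}=0$. If one wished to make the argument self-contained, the single nontrivial step to reprove would be the identity of Proposition~\ref{pro:morphism}; under the stated assumptions it may be taken as given.
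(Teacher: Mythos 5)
Your proof is correct and takes essentially the same route as the paper: the paper likewise obtains this proposition immediately by combining Theorem~\ref{pro:gla} (applied to the representation $(\g^*;L^*)$ of $\g^c$) with Proposition~\ref{pro:LSBi-H}, offering no further computation. Your additional bookkeeping remarks (that $L^*$ is the dual representation of $L$, that $r^\sharp$ sits in degree one, and that the Maurer-Cartan equation collapses to $\Courant{r^\sharp,r^\sharp}=0$ since the differential vanishes) are exactly the implicit steps in the paper's citation-style argument.
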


For $k\geq0$, define $\Psi:\wedge^{k}\g\otimes \g\longrightarrow\huaC^k(\g^*,\g) $ by
\begin{equation}\label{eq:relation-coboundary}  \langle\Psi( \psi)(a_1,\cdots,a_k),a_{k+1}\rangle=\langle \psi,a_1\wedge\cdots\wedge a_k\otimes a_{k+1}\rangle,\quad \forall~a_1,\cdots,a_{k+1}\in\g^*,
\end{equation}
and $\Upsilon:\huaC^k(\g^*,\g) \longrightarrow \wedge^{k}\g\otimes \g$ by
\begin{equation*}\label{eq:defiUpsilon}
\langle \Upsilon(P),a_1\wedge\cdots\wedge a_k\otimes a_{k+1}\rangle=\langle P(a_1,\cdots,a_k),a_{k+1}\rangle,\quad \forall~a_1,\cdots,a_{k+1}\in\g^*.
\end{equation*}
Obviously we have $\Psi\circ\Upsilon={\Id},~~\Upsilon\circ\Psi={\Id}. $

By Lemma \ref{lem:graded Lie algbra} , we have
\begin{thm}\label{thm:MC char}
Let $(\frkg,\cdot_\frkg)$ be a pre-Lie algebra.   Then, there is a graded Lie  bracket $\llbracket \cdot,\cdot\rrbracket_{\frks}:(\wedge^{k}\g\otimes \g)\times (\wedge^{l}\g\otimes \g)\longrightarrow \wedge^{k+l}\g\otimes \g$ on the graded vector space $C_{ \frks}^{*}(\g):=\oplus_{k\ge 1}C_{\rm \frks}^{k}(\g)$ with $C_{ \frks}^{k}(\g):=\wedge^{k-1}\g\otimes \g$ given by
$$
\llbracket \psi,\phi\rrbracket_{\frks}:=\Upsilon\llbracket \Psi( \psi),\Psi(\phi)\rrbracket,\quad  \psi\in\wedge^{k}\g\otimes \g,\phi\in\wedge^{l}\g\otimes \g.
$$
Furthermore,   $r\in \Sym^2(\g)$  is an $\frks$-matrix on the pre-Lie algebra $\g$ if and only if $r$ is a Maurer-Cartan element of the graded Lie algebra $(C_{\frks}^{*}(\g),\Courant{\cdot,\cdot}_{\frks})$.  More precisely, we have
  \begin{equation}
    \llbracket r,r\rrbracket_{\frks}(a_1,a_2,a_3)=2[[r,r]](a_1,a_2,a_3),\quad\forall~ a_1,a_2,a_3\in\g^*.
  \end{equation}
\end{thm}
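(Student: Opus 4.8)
The plan is to deduce the whole statement from Proposition~\ref{lem:graded Lie algbra} by transport of structure along the pair $\Psi,\Upsilon$. Recall that $\Psi$ and $\Upsilon$ are mutually inverse linear isomorphisms with $\Psi\circ\Upsilon=\Upsilon\circ\Psi=\Id$, and that $\Psi$ carries $C_\frks^{k}(\g)=\wedge^{k-1}\g\otimes\g$ isomorphically onto $\huaC^{k-1}(\g^*,\g)$. Regrading $C_\frks^{*}(\g)$ so that $C_\frks^{k}(\g)$ and $\huaC^{k-1}(\g^*,\g)$ occupy the same degree makes $\Psi$ and $\Upsilon$ homogeneous of degree $0$. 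First I would record the purely formal fact that pushing a graded Lie bracket forward along a degree-preserving linear isomorphism again yields a graded Lie bracket: with $\Courant{\varphi,\phi}_\frks:=\Upsilon\Courant{\Psi(\varphi),\Psi(\phi)}$, graded antisymmetry and the graded Jacobi identity for $\Courant{\cdot,\cdot}_\frks$ follow term by term from those of $\Courant{\cdot,\cdot}$ in Proposition~\ref{lem:graded Lie algbra}(i), no signs being disturbed because $\Psi$ preserves degree. This simultaneously establishes the first assertion and exhibits $\Psi$ as an isomorphism of graded Lie algebras from $(C_\frks^{*}(\g),\Courant{\cdot,\cdot}_\frks)$ onto $(\huaC^{*}(\g^*,\g),\Courant{\cdot,\cdot})$.

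For the Maurer--Cartan equivalence, the key point is the identity $\Psi(r)=r^\sharp$: evaluating \eqref{eq:relation-coboundary} at $k=1$ gives $\langle\Psi(r)(\alpha_1),\alpha_2\rangle=\langle r,\alpha_1\otimes\alpha_2\rangle=r(\alpha_1,\alpha_2)=\langle r^\sharp(\alpha_1),\alpha_2\rangle$, so that $r\in\Sym^2(\g)\subseteq\g\otimes\g=C_\frks^{2}(\g)$ maps to $r^\sharp\in\huaC^{1}(\g^*,\g)$. Since both differentials vanish, a Maurer--Cartan element is simply a degree-one solution of $\Courant{\theta,\theta}=0$; as $\Courant{r,r}_\frks=\Upsilon\Courant{r^\sharp,r^\sharp}$ and $\Upsilon$ is injective, $r$ is a Maurer--Cartan element of $(C_\frks^{*}(\g),\Courant{\cdot,\cdot}_\frks)$ if and only if $r^\sharp$ is a Maurer--Cartan element of $(\huaC^{*}(\g^*,\g),\Courant{\cdot,\cdot})$. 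By Proposition~\ref{lem:graded Lie algbra}(ii) the latter holds exactly when $r$ is an $\frks$-matrix, which gives the desired equivalence.

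It remains to verify the explicit identity $\Courant{r,r}_\frks(\alpha_1,\alpha_2,\alpha_3)=2[r,r](\alpha_1,\alpha_2,\alpha_3)$, the only step with genuine computational content. Specializing \eqref{o-bracket} to $\rho=L^*$ and $m=n=1$, I would record that for any linear $T:\g^*\to\g$ one has $\Courant{T,T}(\alpha_1,\alpha_2)=2T\big(L^*_{T\alpha_1}\alpha_2-L^*_{T\alpha_2}\alpha_1\big)-2[T\alpha_1,T\alpha_2]_\g$, where the two shuffle sums built from $P$ and $Q$ coincide and add (the prefactor $-(-1)^{mn}$ becoming $+1$), while the remaining sum contributes the commutator term with sign $(-1)^{mn}=-1$. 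Taking $T=r^\sharp$ and recognizing $L^*_{r^\sharp(\alpha_1)}\alpha_2-L^*_{r^\sharp(\alpha_2)}\alpha_1=[\alpha_1,\alpha_2]_r$ from \eqref{eq:commutator-H}, this becomes $\Courant{r^\sharp,r^\sharp}(\alpha_1,\alpha_2)=2\big(r^\sharp([\alpha_1,\alpha_2]_r)-[r^\sharp(\alpha_1),r^\sharp(\alpha_2)]_\g\big)$, which by \eqref{homo2} of Proposition~\ref{pro:morphism} equals $2[r,r](\alpha_1,\alpha_2,\cdot)$. Pairing with $\alpha_3$ and applying the defining relation of $\Upsilon$, namely $\langle\Upsilon(P),\alpha_1\wedge\alpha_2\otimes\alpha_3\rangle=\langle P(\alpha_1,\alpha_2),\alpha_3\rangle$, yields exactly $\Courant{r,r}_\frks(\alpha_1,\alpha_2,\alpha_3)=2[r,r](\alpha_1,\alpha_2,\alpha_3)$.

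The transport of structure and the Maurer--Cartan translation are routine once the isomorphism $\Psi$ is in hand; I expect the main obstacle to be purely bookkeeping, namely keeping the degree shift between $C_\frks^{k}(\g)=\wedge^{k-1}\g\otimes\g$ and $\huaC^{k-1}(\g^*,\g)$ consistent so that ``Maurer--Cartan element'' refers to the same degree on both sides, and correctly collapsing the three shuffle sums of \eqref{o-bracket} at $m=n=1$ to pin down the coefficient $2$ and the signs. Crucially, invoking Proposition~\ref{pro:morphism} lets me identify the result with $[r,r]$ directly, so no expansion of the $S$-equation \eqref{S-equation1} in the components $r_{12},r_{13},r_{23}$ is needed.
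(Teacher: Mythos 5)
Your proposal is correct and follows essentially the same route as the paper, which states this theorem as an immediate consequence of Proposition~\ref{lem:graded Lie algbra} via transport of structure along $\Psi$ and $\Upsilon$. Your added details—the identification $\Psi(r)=r^\sharp$, the degree bookkeeping, and the degree-one computation of $\Courant{r^\sharp,r^\sharp}$ via Proposition~\ref{pro:morphism} to pin down the factor $2$—are exactly the verifications the paper leaves implicit.
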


In the following, we first give the cohomology of classical $\frks$-matrices on pre-Lie algebras.  Then we build the relationships between cohomology of a classical $\frks$-matrix and the corresponding relative Rota-Baxter operator.  At last, we use this cohomology to study the one-parameter infinitesimal deformations of classical $\frks$-matrices.

Let $r\in \Sym^2(\g)$ be a classical $\frks$-matrix on a pre-Lie algebra $(\g,\cdot_\g)$.  Set $C_{\frks}^{k}(\g)=\wedge^{k-1}\g\otimes \g$ and $C_{\frks}^{*}(\g)=\sum_{k\geq 1}C_{\frks}^{k}(\g)$.  Define $\delta_{\frks}:C_{\frks}^{k}(\g)\longrightarrow
C_{\frks}^{k+1}(\g)$ by
\begin{equation}
\delta_{\frks} \psi =(-1)^{k-1}\Courant{r, \psi}_{\frks},\quad  \psi\in C_{\frks}^{k}(\g).
\end{equation}
By the graded Jacobi identity, we have $\delta_{\frks}\circ \delta_{\frks}=0$.  Thus  $(C_{\frks}^{*}(\g),\delta_{\frks})$ is a cochain complex.  Denote by $H_{\frks}^k(\g)$ the $k$-th cohomology group.

Furthermore, we have
\begin{pro}For $ \psi\in C_{\frks}^{k}(\g)$, we have
 \begin{eqnarray}\label{eq:coboundary of A*}
\nonumber\delta_{\frks} \psi(a_1,\cdots,a_{k+1})
 &=&-\sum_{i=1}^{k}(-1)^{i+1} \psi(a_1, \cdots,\hat{a_i},\cdots,a_k,a_i\cdot_{r} a_{k+1})\nonumber\\
 &&+\sum_{1\leq i<j\leq k}(-1)^{i+j} \psi([a_i,a_j]_{r},a_1,\cdots,\hat{a_i},\cdots,\hat{a_j},\cdots,a_{k+1}),
\end{eqnarray}
where $a_1,\cdots,a_{k+1}\in\g^*$, $\cdot_{r}$ is given by \eqref{eq:pre-bia} and $[\cdot,\cdot]_{r}$ is given by \eqref{eq:commutator-H}.
\end{pro}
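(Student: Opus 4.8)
The plan is to reduce everything to the relative Rota-Baxter setting through the mutually inverse maps $\Psi$ and $\Upsilon$, where the explicit Chevalley--Eilenberg formula \eqref{eq:odiff} is already available, and then to unwind the dualizations. By Proposition \ref{pro:LSBi-H} the operator $T:=\Psi(r)=r^\sharp$ is a relative Rota-Baxter operator on $\g^c$ with respect to $(\g^*;L^*)$; here $\Psi(r)=r^\sharp$ because $\langle\Psi(r)(\alpha),\beta\rangle=\langle r,\alpha\otimes\beta\rangle=\langle r^\sharp(\alpha),\beta\rangle$. The first step is to settle the signs. Since $\Psi(\varphi)\in\huaC^{k-1}(\g^*,\g)$, Proposition \ref{pro:danddT} gives $\Courant{T,\Psi(\varphi)}=(-1)^{k-1}\delta_{\rm RB}\Psi(\varphi)$, and combining this with the transfer relation $\Courant{r,\varphi}_{\frks}=\Upsilon\Courant{\Psi(r),\Psi(\varphi)}$ yields
\[
\delta_{\frks}\varphi=(-1)^{k-1}\Upsilon\Courant{\Psi(r),\Psi(\varphi)}=(-1)^{2(k-1)}\Upsilon\,\delta_{\rm RB}\,\Psi(\varphi)=\Upsilon\circ\delta_{\rm RB}\circ\Psi(\varphi).
\]
Thus $\Psi\circ\delta_{\frks}=\delta_{\rm RB}\circ\Psi$, so it remains only to expand $\delta_{\rm RB}\Psi(\varphi)$ and pair it with a final covector.

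Writing $f:=\Psi(\varphi)$ and using the pairing convention implicit in \eqref{eq:relation-coboundary}, namely $\psi(\beta_1,\dots,\beta_{m+1})=\langle\Psi(\psi)(\beta_1,\dots,\beta_m),\beta_{m+1}\rangle$, the identity $\Psi\delta_{\frks}\varphi=\delta_{\rm RB}f$ says precisely that $\delta_{\frks}\varphi(\alpha_1,\dots,\alpha_{k+1})=\langle\delta_{\rm RB}f(\alpha_1,\dots,\alpha_{k}),\alpha_{k+1}\rangle$. I would then substitute $T=r^\sharp$ and $\rho=L^*$ into \eqref{eq:odiff}. Setting $F_i:=f(\alpha_1,\dots,\hat{\alpha_i},\dots,\alpha_{k})$, this expands the right-hand side into three sums: a commutator sum $\sum_i(-1)^{i+1}\langle[r^\sharp(\alpha_i),F_i]_\g,\alpha_{k+1}\rangle$, a sum $\sum_i(-1)^{i+1}\langle r^\sharp(L^*_{F_i}\alpha_i),\alpha_{k+1}\rangle$, and a sum over $i<j$ whose first slot carries $L^*_{r^\sharp(\alpha_i)}\alpha_j-L^*_{r^\sharp(\alpha_j)}\alpha_i$.

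The last sum matches the second line of \eqref{eq:coboundary of A*} at once: by \eqref{eq:commutator-H} one has $L^*_{r^\sharp(\alpha_i)}\alpha_j-L^*_{r^\sharp(\alpha_j)}\alpha_i=[\alpha_i,\alpha_j]_{r}$, which is also the sub-adjacent bracket $[\cdot,\cdot]^T$ of \eqref{eq:bracketT}. For the first line, I would expand $\alpha_i\cdot_{r}\alpha_{k+1}=\ad^*_{r^\sharp(\alpha_i)}\alpha_{k+1}-R^*_{r^\sharp(\alpha_{k+1})}\alpha_i$ via \eqref{eq:pre-bia} together with the defining relations of $L^*,R^*$ and $\ad^*=L^*-R^*$. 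Pairing $F_i$ against the $\ad^*$ part reassembles into $-\langle\alpha_{k+1},[r^\sharp(\alpha_i),F_i]_\g\rangle$, reproducing the commutator sum, while pairing against the $-R^*_{r^\sharp(\alpha_{k+1})}\alpha_i$ part yields $\langle\alpha_i,F_i\cdot_\g r^\sharp(\alpha_{k+1})\rangle$, which must then be identified with the second sum above.

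The one genuinely load-bearing step, and the only place I expect friction, is this last identification of $\sum_i(-1)^{i+1}\langle r^\sharp(L^*_{F_i}\alpha_i),\alpha_{k+1}\rangle$ with the $R^*$ contribution. It requires moving $r^\sharp$ across the pairing, which is legitimate \emph{only because $r$ is symmetric}:
\[
\langle r^\sharp(L^*_{F_i}\alpha_i),\alpha_{k+1}\rangle=r(L^*_{F_i}\alpha_i,\alpha_{k+1})=r(\alpha_{k+1},L^*_{F_i}\alpha_i)=\langle L^*_{F_i}\alpha_i,r^\sharp(\alpha_{k+1})\rangle=-\langle\alpha_i,F_i\cdot_\g r^\sharp(\alpha_{k+1})\rangle.
\]
This is exactly where the hypothesis $r\in\Sym^2(\g)$, rather than a generic element of $\g\otimes\g$, is indispensable: it makes $r^\sharp$ self-adjoint, so the two contributions align instead of leaving a residual term. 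Collecting the matched pieces then produces \eqref{eq:coboundary of A*}, and I note that no use of the equation $[r,r]=0$ itself is needed here — only symmetry of $r$ and the fact that $\delta_{\rm RB}$ is the Chevalley--Eilenberg differential of the relative Rota-Baxter operator $r^\sharp$.
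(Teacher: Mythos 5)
Your proof is correct, and it is essentially the paper's own (omitted) argument: the paper states this proposition without proof, and your reduction via $\Psi$, Proposition \ref{pro:danddT}, and the relative Rota--Baxter coboundary \eqref{eq:odiff} is exactly the ``direct calculation'' implicit in the definitions, with the commutation $\Psi\circ\delta_{\frks}=\delta_{\rm RB}\circ\Psi$ of Proposition \ref{pro:isomorphism} established along the way. Your signs check out, and your isolation of the symmetry of $r$ as the load-bearing hypothesis --- it makes $r^\sharp$ self-adjoint, so that $\langle r^\sharp(L^*_{F_i}\alpha_i),\alpha_{k+1}\rangle=-\langle\alpha_i,F_i\cdot_\g r^\sharp(\alpha_{k+1})\rangle$ and the $R^*$-contribution of $\cdot_r$ absorbs the second sum of \eqref{eq:odiff} --- is precisely where the identity would fail for a non-symmetric $r\in\g\otimes\g$.
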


It is clear that the coboundary operator $\delta_{\frks}$ is just the coboundary operator given by \eqref{eq:pre-Lie cohomology} associated to the pre-Lie algebra $(\g^*,\cdot_{r})$ with coefficients in trivial representations.

By Proposition \ref{pro:LSBi-H}, we have
\begin{pro}
  Let $r$ be a classical $\frks$-matrix on a pre-Lie algebra $(\g,\cdot_\g)$.  Then $(\g^*,\ast_{r})$ is a pre-Lie algebra, where $\ast_{r}$ is given by
  \begin{equation}
    a\ast_{r}b=L^*_{r^\sharp(a)}b,\quad \forall~a,b\in\g^*.
  \end{equation}
\end{pro}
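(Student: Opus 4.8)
The plan is to obtain this as an immediate consequence of Proposition \ref{pro:LSBi-H} together with the theorem of \cite{Bai-1} recalled in Section \ref{sec:pre}, so that no direct verification of the pre-Lie (left-symmetry) identity is needed.

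First I would invoke Proposition \ref{pro:LSBi-H}: since $r\in\Sym^2(\g)$ is an $\frks$-matrix on $(\g,\cdot_\g)$, the associated map $r^\sharp:\g^*\to\g$ is a relative Rota-Baxter operator on the sub-adjacent Lie algebra $\g^c$ with respect to the representation $(\g^*;L^*)$. This places us precisely in the hypotheses of the theorem of \cite{Bai-1}, with $T=r^\sharp$, $V=\g^*$ and $\rho=L^*$. I would then apply that theorem, which asserts that the multiplication $\alpha\cdot^T\beta:=\rho(T\alpha)(\beta)$ endows $V$ with a pre-Lie algebra structure. Substituting $T=r^\sharp$ and $\rho=L^*$ yields $\alpha\cdot^T\beta=L^*_{r^\sharp(\alpha)}(\beta)$, which is exactly the operation $\ast_r$ in the statement; hence $(\g^*,\ast_r)$ is a pre-Lie algebra.

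The key (and essentially only) observation is thus the identification $\ast_r=\cdot^{r^\sharp}$, and once this is noticed there is no genuine obstacle, since the pre-Lie axioms are already guaranteed by the cited structural result. I would expect the only point requiring a sentence of care to be the bookkeeping that the Lie algebra underlying the relative Rota-Baxter operator here is the commutator Lie algebra $\g^c$ of $(\g,\cdot_\g)$, with bracket $[\cdot,\cdot]_\g$, and that $L^*$ is indeed a representation of $\g^c$ on $\g^*$, both of which are recorded earlier in the excerpt. A direct alternative would be to expand the associator of $\ast_r$ and invoke the defining relation \eqref{eq:defiO} for $r^\sharp$ together with the representation condition for $L^*$, but this merely reproduces the proof of the theorem of \cite{Bai-1} and so I would avoid it.
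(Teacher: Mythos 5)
Your proposal is correct and is exactly the paper's argument: the paper derives this proposition directly from Proposition \ref{pro:LSBi-H} (that $r^\sharp$ is a relative Rota-Baxter operator on $\g^c$ with respect to $(\g^*;L^*)$) combined with the recalled theorem of \cite{Bai-1}, under the same identification $\ast_r=\cdot^{\,r^\sharp}$ that you make. No further verification is needed.
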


 It is obvious that the pre-Lie algebras $(\g^*,\ast_{r})$ and $(\g^*,\cdot_{r})$ have the same sub-adjacent Lie algebra $(\g^*,[\cdot,\cdot]_{r})$.

By Lemma \ref{lem:rep}, we have
\begin{pro}\label{pro:KV-structure}
Let $r$ be a classical $\frks$-matrix on a pre-Lie algebra $(\g,\cdot_\g)$.
 Then
 \begin{equation}\label{eq:rep-coadjoint}
 \varrho:\g^*\longrightarrow \gl(\g),\quad \varrho(a)(x)=[r^\sharp(a),x]_{\g}+r^\sharp(L^*_xa ),\quad  x\in\g,a\in\g^*
\end{equation}
 is a representation of the sub-adjacent Lie algebra $(\g^*,[\cdot,\cdot]_{r})$ on the vector space $\g$.
\end{pro}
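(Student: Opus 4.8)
The plan is to deduce the statement as a direct specialization of Lemma \ref{lem:rep} via the dictionary established in Proposition \ref{pro:LSBi-H}, so essentially no new computation is required. First I would invoke Proposition \ref{pro:LSBi-H}: since $r\in\Sym^2(\g)$ is an $\frks$-matrix on $(\g,\cdot_\g)$, the map $r^\sharp:\g^*\to\g$ is a relative Rota-Baxter operator on the sub-adjacent Lie algebra $\g^c$ with respect to the representation $(\g^*;L^*)$. This places us exactly in the hypotheses of Lemma \ref{lem:rep}, under the substitution $T=r^\sharp$, $V=\g^*$, $\rho=L^*$, with ambient Lie algebra $\g^c$.

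Next I would apply Lemma \ref{lem:rep} verbatim. Its conclusion produces the representation $\varrho(u)(x)=[Tu,x]_\g+T\rho(x)(u)$ of the sub-adjacent Lie algebra $(V,[\cdot,\cdot]^T)$ on $\g$. Specializing $u=\alpha\in\g^*$, $T=r^\sharp$, and $\rho(x)=L^*_x$ turns the first term into $[r^\sharp(\alpha),x]_\g$ and the second into $r^\sharp(L^*_x\alpha)$, which is precisely the formula \eqref{eq:rep-coadjoint} in the statement. It then remains only to identify the base Lie algebra: the bracket $[u,v]^T=\rho(Tu)(v)-\rho(Tv)(u)$ from \eqref{eq:bracketT} reads, under the same substitution, $[\alpha,\beta]^{r^\sharp}=L^*_{r^\sharp(\alpha)}\beta-L^*_{r^\sharp(\beta)}\alpha$, which coincides exactly with $[\cdot,\cdot]_r$ as defined in \eqref{eq:commutator-H}. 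Hence $\varrho$ is a representation of $(\g^*,[\cdot,\cdot]_r)$ on $\g$, as claimed.

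Since every ingredient is a direct instantiation of already-proven results, there is no genuine obstacle; the only point requiring care is the bookkeeping of conventions, namely checking that $\rho(x)(u)=L^*_x\alpha$ under the chosen identification and that the bracket $[\cdot,\cdot]^T$ arising from the relative Rota-Baxter picture matches the independently named bracket $[\cdot,\cdot]_r$. Should one prefer a self-contained verification, the representation axioms for $\varrho$ (additivity in $\alpha$, and the compatibility $\varrho([\alpha,\beta]_r)=\varrho(\alpha)\varrho(\beta)-\varrho(\beta)\varrho(\alpha)$) could be checked directly by expanding using the relative Rota-Baxter identity \eqref{eq:defiO} for $r^\sharp$ together with the fact that $L^*$ is a representation of $\g^c$; but this would merely reprove Lemma \ref{lem:rep} in the special case at hand, so I would instead cite the lemma.
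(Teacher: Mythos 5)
Your proposal is correct and follows exactly the paper's own route: the paper introduces this proposition with the words ``By Lemma \ref{lem:rep}, we have,'' i.e.\ it too obtains the result by specializing Lemma \ref{lem:rep} to $T=r^\sharp$, $V=\g^*$, $\rho=L^*$ (legitimized by Proposition \ref{pro:LSBi-H}), with the bracket $[\cdot,\cdot]^T$ identified with $[\cdot,\cdot]_r$ via \eqref{eq:commutator-H}. Your write-up merely makes explicit the bookkeeping the paper leaves implicit, which is entirely appropriate.
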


Note that the representation $ \varrho$ given by \eqref{eq:rep-coadjoint} is exactly the dual representation of the left  multiplication  operation of the  pre-Lie algebra $(\g^*,\ast_{r})$.

Let $r$ be a classical $\frks$-matrix on a pre-Lie algebra $(\g,\cdot_\g)$.  By \eqref{eq:odiff}, for $P\in\huaC^k(\g^*,\g)$ and $a_1,\cdots,a_{k+1}\in\g^*$, the coboundary operator $\delta_{\rm RB}:\huaC^k(\g^*,\g)\longrightarrow \huaC^{k+1}(\g^*,\g)$ of the relative Rota-Baxter operator $r^\sharp$ is given by
\begin{eqnarray}\label{eq:coboundary of A*A}
&&\delta_{\rm RB}P(a_1,\cdots,a_{k+1})\nonumber\\
&=&\sum_{i=1}^{k+1}(-1)^{i+1}[r^\sharp(a_i),P(a_1,a_2,\cdots,\hat{a_i},\cdots,a_{k+1})]_{\g}\nonumber\\
&&+\sum_{i=1}^{k+1}(-1)^{i+1}r^\sharp\big(L^*_{P(a_1,a_2,\cdots,\hat{a_i},\cdots,a_{k+1})}\alpha_i\big)\nonumber\\
&&+\sum_{1\leq i<j\leq {k+1}}(-1)^{i+j}P([a_i,a_j]_{r},a_1,\cdots,\hat{a_i},\cdots,\hat{a_j},\cdots,a_{k+1}).
\end{eqnarray}
Denote by $H^k(\g^*,\g)$ the $k$-th cohomology group, called the {\bf{ $k$-th cohomology group of the relative Rota-Baxter operator $r^\sharp$}}.

\begin{pro}\label{pro:isomorphism}
  With the above notations, the map $\Psi$ defined by \eqref{eq:relation-coboundary} is a cochain isomorphism between cochain complexes $(C_{\frks}^{*}(\g),\delta_{\frks})$ and $(\huaC^*(\g^*,\g),\delta_{\rm RB})$, i. e.  we have the following commutative diagram:
  \[
\small{ \xymatrix{
\cdots
\longrightarrow C_{\frks}^{k+1}(\g) \ar[d]^{\Psi} \ar[r]^{\quad\delta_{\frks}} & C_{\frks}^{k+2}(\g) \ar[d]^{\Psi} \ar[r]  & \cdots  \\
\cdots\longrightarrow \huaC^k(\g^*,\g) \ar[r]^{\quad \delta_{\rm RB}} &\huaC^{k+1}(\g^*,\g)\ar[r]& \cdots. }
}
\]
  Consequently, $\Psi$ induces an isomorphism map $\Psi_\ast$ between the corresponding cohomology groups.
\end{pro}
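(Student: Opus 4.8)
The plan is to reduce everything to the single assertion that $\Psi$ is a cochain map, i.e.\ that $\Psi\circ\delta_{\frks}=\delta_{\rm RB}\circ\Psi$. This is enough: we already know that $\Psi$ and $\Upsilon$ are mutually inverse in each degree, so $\Psi$ is a degreewise linear isomorphism; a bijective cochain map is automatically a cochain isomorphism and hence induces isomorphisms $\Psi_\ast$ on all cohomology groups. So the entire content of the proposition is the commutativity of the displayed square.

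To prove the commutativity I would work at the level of the graded Lie brackets rather than unwinding the explicit formulas \eqref{eq:coboundary of A*} and \eqref{eq:coboundary of A*A}, because both differentials are, by construction, signed brackets with the respective Maurer-Cartan elements. First I would record that $\Psi(r)=r^\sharp$: evaluating \eqref{eq:relation-coboundary} in degree one gives $\langle \Psi(r)(\alpha_1),\alpha_2\rangle=\langle r,\alpha_1\otimes\alpha_2\rangle=r(\alpha_1,\alpha_2)=\langle r^\sharp(\alpha_1),\alpha_2\rangle$. Next, from the definition $\Courant{\varphi,\phi}_{\frks}=\Upsilon\Courant{\Psi(\varphi),\Psi(\phi)}$ together with $\Psi\circ\Upsilon=\Id$, the map $\Psi$ intertwines the two brackets, $\Psi\Courant{\varphi,\phi}_{\frks}=\Courant{\Psi(\varphi),\Psi(\phi)}$; that is, $\Psi$ is an isomorphism of graded Lie algebras sending $r$ to $r^\sharp$. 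Now for $\varphi\in C_{\frks}^{k+1}(\g)=\wedge^k\g\otimes\g$, so that $\Psi(\varphi)\in\huaC^k(\g^*,\g)$, I would combine $\delta_{\frks}\varphi=(-1)^k\Courant{r,\varphi}_{\frks}$ with Proposition \ref{pro:danddT} (applicable to $r^\sharp$ by Proposition \ref{pro:LSBi-H}), which gives $\delta_{\rm RB}\Psi(\varphi)=(-1)^k\Courant{r^\sharp,\Psi(\varphi)}$, to obtain
$$\Psi(\delta_{\frks}\varphi)=(-1)^k\Psi\Courant{r,\varphi}_{\frks}=(-1)^k\Courant{\Psi(r),\Psi(\varphi)}=(-1)^k\Courant{r^\sharp,\Psi(\varphi)}=\delta_{\rm RB}\Psi(\varphi).$$

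The only delicate point, and the step I expect to require care, is the bookkeeping: matching the one-step degree shift $C_{\frks}^{k+1}(\g)\leftrightarrow\huaC^{k}(\g^*,\g)$ against the two sign conventions. Concretely, $\delta_{\frks}$ on $C_{\frks}^{k+1}(\g)$ carries the sign $(-1)^{(k+1)-1}=(-1)^k$, while $\delta_{\rm RB}$ on $\huaC^k(\g^*,\g)$ carries the sign $(-1)^k$, and the extra factor in the exponent of the definition of $\delta_{\frks}$ is absorbed exactly by the shift; I would check this alignment once at the start. (One could instead verify $\Psi\circ\delta_{\frks}=\delta_{\rm RB}\circ\Psi$ by directly comparing \eqref{eq:coboundary of A*} with \eqref{eq:coboundary of A*A} after pairing against $\alpha_{k+1}$ through \eqref{eq:relation-coboundary}, but this route is considerably more computational and offers no extra insight.) With commutativity established, $\Psi$ is a bijective cochain map, hence a cochain isomorphism, and therefore induces isomorphisms $\Psi_\ast\colon H_{\frks}^{k}(\g)\to H^{k}(\g^*,\g)$ on cohomology, completing the proof.
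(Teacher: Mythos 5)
Your proof is correct. The paper itself dispatches this proposition with the single line ``It follows by a direct calculation,'' which in context means verifying term by term that the explicit formulas \eqref{eq:coboundary of A*} and \eqref{eq:coboundary of A*A} correspond under the pairing \eqref{eq:relation-coboundary}. Your route is genuinely different and more structural: you never touch the explicit coboundary formulas, but instead exploit that the bracket $\Courant{\cdot,\cdot}_{\frks}$ is \emph{defined} by transport through $\Psi$ and $\Upsilon$, so that $\Psi\Courant{\varphi,\phi}_{\frks}=\Courant{\Psi(\varphi),\Psi(\phi)}$ follows tautologically from $\Psi\circ\Upsilon=\Id$; combined with the one-line computation $\Psi(r)=r^\sharp$ and the facts that $\delta_{\frks}$ is by definition bracketing with $r$ while $\delta_{\rm RB}$ is bracketing with $r^\sharp$ by Proposition \ref{pro:danddT} (applicable since $r^\sharp$ is a relative Rota--Baxter operator by Proposition \ref{pro:LSBi-H}), commutativity of the square is immediate. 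Your sign bookkeeping is also right: both conventions attach the sign $(-1)^{d}$ where $d$ is the graded Lie algebra degree, and $\Psi$ preserves that degree (it is only the cochain degree that is shifted by one, $C_{\frks}^{k+1}(\g)\leftrightarrow\huaC^{k}(\g^*,\g)$), so the two factors $(-1)^k$ agree on the nose. What your approach buys is economy and transparency: the proposition is seen to be essentially built into the definitions, with all computational content already delegated to the cited Proposition \ref{pro:danddT}. What the paper's direct calculation would buy instead is an independent consistency check of the two explicit formulas \eqref{eq:coboundary of A*} and \eqref{eq:coboundary of A*A}, at the cost of a longer and less illuminating verification. Your final reduction is also sound: a degreewise bijective cochain map has an inverse ($\Upsilon$) that automatically commutes with the differentials, hence is a cochain isomorphism and induces isomorphisms $\Psi_\ast\colon H_{\frks}^{k+1}(\g)\to H^{k}(\g^*,\g)$ on cohomology.
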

\begin{proof}
It follows by a direct calculation.
\end{proof}

Now we introduce a new cochain complex, whose cohomology groups  control   deformations of classical  $\frks$-matrices.  Let $r$ be a classical $\frks$-matrix on a pre-Lie algebra.  For all $a_1,a_2,a_3\in \g^*$, define
\begin{eqnarray*}
	\tilde{\huaC}_{\frks}^1(\g)&=&\{x \in 	{\huaC}_{\frks}^1(\g)\mid (R_x\otimes 1+1\otimes R_x)r=0\},\\
	\tilde{\huaC}_{\frks}^2(\g)&=&\{ \psi\in {\huaC}_{\frks}^2(\g)\mid \psi(a_1,a_2)= \psi(a_2,a_1)\},\\
	\tilde{\huaC}_{\frks}^3(\g)&=&\{ \psi\in {\huaC}_{\frks}^3(\g)\mid \psi(a_1,a_2,a_3)+c. p. =0\},\\
	\tilde{\huaC}_{\frks}^k(\g)&=&{\huaC}_{\frks}^k(\g),\quad k\geq 4.
\end{eqnarray*}
It is straightforward to verify that the cochain complex $(\tilde{C}_{\frks}^{*}(\g),\delta_{\frks})$ is a subcomplex of the cochain complex $(C_{\frks}^{*}(\g),\delta_{\frks})$.  Denote by $\tilde{H}_{\frks}^k(\g)$ the $k$-th cohomology group, called the {\bf $k$-th cohomology group of the $\frks$-matrix $r$}

\begin{defi}
Let $\g$ be a pre-Lie algebra and $r\in  \Sym^2(\g)$ a classical  $\frks$-matrix.  If $r+t\kappa$ is also a classical  $\frks$-matrix for any $t\in\Real$, then we say that $\kappa$ generates a {\bf one-parameter infinitesimal deformation} of $r$.
\end{defi}

\begin{pro}
If $\kappa$ generates a one-parameter infinitesimal deformation of $r$, then $\kappa$ is a $2$-cocycle.
\end{pro}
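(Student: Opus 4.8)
The plan is to read the deformation hypothesis through the Maurer--Cartan characterization of $\frks$-matrices and then use that $\delta_{\frks}$ is, up to sign, the operation of bracketing with the Maurer--Cartan element $r$. Throughout I work in the graded Lie algebra $(C_{\frks}^{*}(\g),\Courant{\cdot,\cdot}_{\frks})$, whose differential is zero, so that an element of $\Sym^2(\g)$ is an $\frks$-matrix precisely when its self-bracket vanishes.

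First I would pin down where the cocycle identity lives. Since $r$ and $r+t\kappa$ both belong to the linear subspace $\Sym^2(\g)$, taking $t=1$ gives $\kappa=(r+\kappa)-r\in\Sym^2(\g)$; hence $\kappa(\alpha_1,\alpha_2)=\kappa(\alpha_2,\alpha_1)$ and $\kappa\in\tilde{C}_{\frks}^2(\g)$, so the computation takes place in the correct subcomplex. Next I would translate the hypothesis: by the Maurer--Cartan characterization, $r+t\kappa$ is an $\frks$-matrix for every $t\in\Real$ if and only if $\Courant{r+t\kappa,r+t\kappa}_{\frks}=0$ for every $t$. Expanding by bilinearity yields
$$\Courant{r,r}_{\frks}+t\big(\Courant{r,\kappa}_{\frks}+\Courant{\kappa,r}_{\frks}\big)+t^2\Courant{\kappa,\kappa}_{\frks}=0 .$$
The constant term vanishes because $r$ itself is an $\frks$-matrix, and since the left-hand side is a polynomial in $t$ that vanishes for all real $t$, each coefficient must vanish separately; in particular $\Courant{r,\kappa}_{\frks}+\Courant{\kappa,r}_{\frks}=0$.

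To finish I would invoke the graded symmetry of the bracket on $C_{\frks}^{2}(\g)$. As $r$ and $\kappa$ have the same (odd) Maurer--Cartan degree, graded antisymmetry $\Courant{P,Q}=-(-1)^{|P||Q|}\Courant{Q,P}$ specializes to $\Courant{r,\kappa}_{\frks}=\Courant{\kappa,r}_{\frks}$, so the vanishing of the coefficient of $t$ becomes $2\Courant{r,\kappa}_{\frks}=0$, that is, $\Courant{r,\kappa}_{\frks}=0$. Since $\kappa\in C_{\frks}^2(\g)$, the defining relation $\delta_{\frks}\kappa=(-1)^{2-1}\Courant{r,\kappa}_{\frks}=-\Courant{r,\kappa}_{\frks}$ then gives $\delta_{\frks}\kappa=0$, so $\kappa$ is a $2$-cocycle.

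The main obstacle is exactly this symmetry step: without $\Courant{r,\kappa}_{\frks}=\Courant{\kappa,r}_{\frks}$ the coefficient of $t$ would collapse identically and carry no information, so the whole argument hinges on the bracket being graded symmetric on degree-$1$ elements. I would justify it either abstractly, from the graded antisymmetry of the relative Rota-Baxter bracket $\Courant{\cdot,\cdot}$ on $\huaC^*(\g^*,\g)$ transported along the isomorphism $\Psi,\Upsilon$, or concretely, by specializing formula \eqref{o-bracket} to $n=m=1$ and checking directly that $\Courant{P,Q}=\Courant{Q,P}$ using the skew-symmetry of $[\cdot,\cdot]_\g$.
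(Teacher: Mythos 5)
Your proof is correct and follows essentially the same route as the paper: expand $\Courant{r+t\kappa,r+t\kappa}_{\frks}=0$ as a polynomial in $t$, extract the coefficient of $t$ to conclude $\Courant{r,\kappa}_{\frks}=0$, and then use $\delta_{\frks}\kappa=\pm\Courant{r,\kappa}_{\frks}$. The only difference is that you explicitly justify the graded symmetry $\Courant{r,\kappa}_{\frks}=\Courant{\kappa,r}_{\frks}$ on degree-one elements, which the paper absorbs silently into its factor of $2$ when it writes $2t\Courant{r,\kappa}_{\frks}$.
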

\begin{proof}
Since $r+t\kappa$ is a classical  $\frks$-matrix for any $t\in\Real$, we have
$$\Courant{r+t\kappa,r+t\kappa}_{\frks}=\Courant{r,r}_{\frks}+2t\Courant{r,\kappa}_{\frks}+t^2 \Courant{\kappa,\kappa}_{\frks}=0,$$
which implies that $\Courant{r,\kappa}_{\frks}=0$ and thus $\delta_\frks\kappa=0$.
\end{proof}

\begin{defi}
Two one-parameter infinitesimal deformations $r^1_t=r+t\kappa_1$ and $r^2_t=r+t\kappa_2$ of a classical  $\frks$-matrix $r$ are called {\bf equivalent} if there exists $x\in \tilde{\huaC}_{\frks}^1(\g)$, such that $({\Id_\g}+t\ad_x,{\Id_\g}-tL_x):r^2_t\rightarrow r^1_t$
 is a weak homomorphism.
\end{defi}

\begin{pro}
If two one-parameter infinitesimal deformations $r^1_t=r+t\kappa_1$ and $r^2_t=r+t\kappa_2$ of a classical $\frks$-matrix $r$ are equivalent, then $\kappa_1$ and $\kappa_2$ are in the same cohomology class of $\tilde{H}_{\frks}^2(\g)$.
\end{pro}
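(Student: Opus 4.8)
The plan is to extract the first-order ($t^1$) information carried by the equivalence and to recognize it as the coboundary of an element of $\tilde{\huaC}_{\frks}^1(\g)$. First I would unpack the hypothesis: by definition of equivalence there is an $x\in\tilde{\huaC}_{\frks}^1(\g)$ such that $(\Id_\g+t\ad_x,\ \Id_\g-tL_x)$ is a weak homomorphism from $r^2_t=r+t\kappa_2$ to $r^1_t=r+t\kappa_1$. I would write out condition \eqref{eq:weak 1} for this pair, namely $((\Id_\g-tL_x)\otimes\Id_\g)(r+t\kappa_1)=(\Id_\g\otimes(\Id_\g+t\ad_x))(r+t\kappa_2)$, expand both sides as polynomials in $t$, and compare coefficients. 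The $t^0$ terms agree trivially, while the $t^1$ terms yield the single relation
\[
\kappa_1-\kappa_2=(L_x\otimes\Id_\g)r+(\Id_\g\otimes\ad_x)r.
\]
I would also note that condition \eqref{eq:weak 2}, expanded to first order in $t$, reduces to the defining left-symmetry identity of $(\g,\cdot_\g)$ and is therefore automatically satisfied, while the first-order homomorphism property of $\Id_\g+t\ad_x$ is exactly the Jacobi identity of $\g^c$. Thus only \eqref{eq:weak 1} carries cohomological content.

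The heart of the argument is the identity $(L_x\otimes\Id_\g)r+(\Id_\g\otimes\ad_x)r=-\delta_{\frks}x$. I would prove it by pairing both sides against an arbitrary $\alpha_1\otimes\alpha_2\in\g^*\otimes\g^*$ and using the explicit coboundary formula \eqref{eq:coboundary of A*} in degree $k=1$, which gives $\delta_{\frks}x(\alpha_1,\alpha_2)=-\langle\alpha_1\cdot_{r}\alpha_2,x\rangle$. Substituting the definition \eqref{eq:pre-bia}, $\alpha_1\cdot_{r}\alpha_2=\ad^*_{r^\sharp(\alpha_1)}\alpha_2-R^*_{r^\sharp(\alpha_2)}\alpha_1$, and unwinding the pairings via $\langle L^*_z\alpha,w\rangle=-\langle\alpha,z\cdot_\g w\rangle$ and $\langle\ad^*_z\alpha,w\rangle=-\langle\alpha,[z,w]_\g\rangle$, both sides collapse to $-\langle\alpha_2,r^\sharp(L^*_x\alpha_1)\rangle-\langle\alpha_1,r^\sharp(\ad^*_x\alpha_2)\rangle$. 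The matching of the two computations uses the symmetry of $r$ in the self-duality form $\langle r^\sharp(\mu),\nu\rangle=\langle\mu,r^\sharp(\nu)\rangle$. This is the step I expect to be the main obstacle: it is pure sign-and-pairing bookkeeping, and one must keep the regular/dual representation conventions for $L^*,R^*,\ad^*$ perfectly consistent.

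Finally I would assemble the conclusion. Combining the two displayed facts gives $\kappa_1-\kappa_2=-\delta_{\frks}x=\delta_{\frks}(-x)$. Since $x\in\tilde{\huaC}_{\frks}^1(\g)$ and $(\tilde{C}_{\frks}^{*}(\g),\delta_{\frks})$ is a subcomplex of $(C_{\frks}^{*}(\g),\delta_{\frks})$, the element $\delta_{\frks}(-x)$ lies in $\tilde{\huaC}_{\frks}^2(\g)$; consistently, $\kappa_1-\kappa_2$ is symmetric because $\kappa_1$ and $\kappa_2$ are (they are differences of $\frks$-matrices), so both are genuine symmetric $2$-cocycles in $\tilde{\huaC}_{\frks}^2(\g)$ by the preceding proposition. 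Hence $\kappa_1$ and $\kappa_2$ differ by the coboundary $\delta_{\frks}(-x)$ inside the subcomplex, and therefore represent the same class in $\tilde{H}_{\frks}^2(\g)$, as claimed. (As an alternative organization, one could transport the whole argument through the cochain isomorphism $\Psi$ of Proposition \ref{pro:isomorphism} to the relative Rota–Baxter side, but the restriction to the symmetric subcomplex is handled most transparently by the direct computation above.)
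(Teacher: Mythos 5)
Your proof is correct and takes essentially the same route as the paper: extract the $t$-linear coefficient of condition \eqref{eq:weak 1} for the pair $({\Id_\g}+t\ad_x,{\Id_\g}-tL_x)$, obtaining $\kappa_1-\kappa_2=(L_x\otimes{\Id_\g})r+({\Id_\g}\otimes\ad_x)r$, and identify this with $-\delta_{\frks}x=\delta_{\frks}(-x)$, so the two cocycles differ by a coboundary coming from $\tilde{\huaC}_{\frks}^1(\g)$. The paper asserts the identification $-({\Id_\g}\otimes\ad_x+L_x\otimes{\Id_\g})(r)=\delta_{\frks}(x)$ without computation, so your explicit pairing verification, and your remark that $\kappa_1-\kappa_2$ indeed lies in the symmetric subcomplex, merely fill in details the paper leaves implicit.
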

\begin{proof}
Since $({\Id_\g}+\ad_x,{\Id_\g}-tL_x):r^2_t\rightarrow r^1_t$ is a weak homomorphism,  there exists $x\in \tilde{\huaC}_{\frks}^1(\g)$ such that
$${\Id_\g} \otimes ({\Id_\g} +t\ad_x)(r^2_t)=({\Id_\g}-tL_x)\otimes {\Id_\g} (r^1_t),$$
which implies that $\kappa_2-\kappa_1=-({\Id_\g}\otimes \ad_x+L_x\otimes {\Id_\g})(r)=\delta_\frks(x). $
\end{proof}

\begin{defi}
 A one-parameter infinitesimal deformation $r_t=r+t\kappa$ is called {\bf trivial} if there exists $x\in \tilde{\huaC}_{\frks}^1(\g)$, such that $({\Id_\g}+\ad_x,{\Id_\g}-tL_x):r^2_t\rightarrow r$
 is a weak homomorphism.
\end{defi}

Let  $r_t=r+t\kappa$ be a trivial deformation of a classical  $\frks$-matrix $r$.  Then there exists $x\in \tilde{\huaC}_{\frks}^1(\g)$ such that $({\Id_\g}+\ad_x,{\Id_\g}-tL_x):r^2_t\rightarrow r$ is a weak homomorphism.  First, by the fact that ${\Id_\g}+\ad_x:\g^c\rightarrow\g^c$ is a Lie algebra homomorphism, we have
\begin{equation}
\label{eq:Nij1}[[x,y]_\g,[x,z]_\g]_\g=0,\quad y,z\in\g.
\end{equation}

By \eqref{eq:weak 1}, we have
\begin{equation}
\label{eq:Nij2}({\Id}\otimes\ad_x)({\Id}\otimes\ad_x+L_x\otimes {\Id})r=0.
\end{equation}

By \eqref{eq:weak 2}, we have
\begin{equation}
\label{eq:Nij3}x\cdot_\g([x,y]_\g\cdot_\g z )=0,\quad y,z\in\g.
\end{equation}

\begin{defi}
Let $\g$ be a pre-Lie algebra and $r\in  \Sym^2(\g)$ a classical  $\frks$-matrix.  An element $x\in \tilde{\huaC}_{\frks}^1(\g)$ is called a {\bf Nijenhuis element} if $x$ satisfies \eqref{eq:Nij1}-\eqref{eq:Nij3}.
\end{defi}

Thus, a trivial deformation of a classical  $\frks$-matrix gives rise to a Nijenhuis element.  Conversely, we have
\begin{pro}\label{eq:trivial deformation}
Let $\g$ be a pre-Lie algebra and $r\in  \Sym^2(\g)$ a classical  $\frks$-matrix.  Then for any Nijenhuis element $x\in \tilde{\huaC}_{\frks}^1(\g)$, $r_t=r+t\Courant{r,x}_\frks$ is a trivial one-parameter infinitesimal deformation of $r$.
\end{pro}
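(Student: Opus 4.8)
The plan is to reverse the computation carried out just before the definition of a Nijenhuis element. There a trivial deformation was shown to \emph{force} \eqref{eq:Nij1}--\eqref{eq:Nij3}; here I want to show conversely that these three identities are exactly what is needed to turn $(\Id_\g+t\ad_x,\Id_\g-tL_x)$ into a weak homomorphism from $r_t=r+t\Courant{r,x}_{\frks}$ to $r$, and, separately, that $r_t$ is a genuine $\frks$-matrix so that $\Courant{r,x}_{\frks}$ really generates a one-parameter infinitesimal deformation.

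First I would make $\kappa:=\Courant{r,x}_{\frks}$ explicit. Since $x\in\huaC_{\frks}^1(\g)$ has degree $1$, the relation $\delta_{\frks}\varphi=(-1)^{k-1}\Courant{r,\varphi}_{\frks}$ gives $\kappa=\delta_{\frks}x=-(\Id_\g\otimes\ad_x+L_x\otimes\Id_\g)(r)$, exactly the expression already used in the proof that equivalent deformations are cohomologous. Using the membership $x\in\tilde{\huaC}_{\frks}^1(\g)$, that is $(R_x\otimes 1+1\otimes R_x)r=0$, together with $r\in\Sym^2(\g)$, I would check that $\kappa$ is symmetric, so that $r_t\in\Sym^2(\g)$ for every $t$; this is the one place where the defining condition of $\tilde{\huaC}_{\frks}^1(\g)$ is genuinely used, as it controls the antisymmetric part of $\kappa$.

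Next I would verify the three defining conditions of a weak homomorphism $(\Id_\g+t\ad_x,\Id_\g-tL_x)\colon r_t\to r$, organizing each by its degree in $t$. The statement that $\Id_\g+t\ad_x$ is an endomorphism of $\g^c$ has vanishing linear term by the Jacobi identity and quadratic term equal to \eqref{eq:Nij1}; equation \eqref{eq:weak 1} has linear term equal to the defining relation $\kappa=-(\Id_\g\otimes\ad_x+L_x\otimes\Id_\g)(r)$ and quadratic term $(\Id_\g\otimes\ad_x)\kappa=0$, which is precisely \eqref{eq:Nij2}; and equation \eqref{eq:weak 2} has vanishing linear term by the left-symmetry of $\cdot_\g$ and quadratic term equal to \eqref{eq:Nij3}. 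Hence all three conditions hold identically in $t$.

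To see that $r_t$ is an $\frks$-matrix I would pass to the relative Rota-Baxter picture of Proposition \ref{pro:LSBi-H}: the conditions just verified say that $(\Id_\g+t\ad_x,\Id_\g+tL^*_x)$ intertwines $r_t^\sharp$ with $r^\sharp$ as in the definition of a homomorphism of relative Rota-Baxter operators. For small $t$ the maps $\Id_\g+t\ad_x$ and $\Id_\g-tL_x$ are invertible, so the transport-of-structure computation for relative Rota-Baxter operators (a morphism with invertible components carries a relative Rota-Baxter operator to a relative Rota-Baxter operator) shows that $r_t^\sharp$ is again a relative Rota-Baxter operator with respect to $(\g^*;L^*)$; combined with the symmetry of $r_t$ this makes $r_t$ an $\frks$-matrix for small $t$, hence, the $S$-equation being polynomial in $t$, for all $t$. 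Finally, since $(\Id_\g+t\ad_x,\Id_\g-tL_x)$ is a weak homomorphism from $r_t$ to $r$, the deformation $r_t=r+t\Courant{r,x}_{\frks}$ is trivial by definition. The routine bookkeeping is the degree-by-degree expansion; I expect the two points that actually use the hypotheses to be the symmetry of $\kappa$ and the claim that $r_t$ is a bona fide $\frks$-matrix, and I would obtain the latter from the transport-of-structure argument rather than by expanding the quadratic bracket $\Courant{\Courant{r,x}_{\frks},\Courant{r,x}_{\frks}}_{\frks}$ directly, which is the cleaner route and the main thing to get right.
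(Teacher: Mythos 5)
Your argument is correct in substance, and on the triviality half it coincides with the paper's proof: the paper likewise deduces the three weak-homomorphism conditions for $({\Id_\g}+t\ad_x,{\Id_\g}-tL_x)$ from \eqref{eq:Nij1}--\eqref{eq:Nij3}, and your order-by-order expansion (Jacobi identity and left-symmetry at order $t$, the Nijenhuis identities at order $t^2$) is exactly the computation the paper leaves implicit. Where you genuinely depart is in proving that $r_t$ is an $\frks$-matrix at all. The paper stays inside the graded Lie algebra: graded Jacobi plus $\Courant{r,r}_\frks=0$ give $\Courant{r,\Courant{r,x}_\frks}_\frks=0$, and it asserts $\Courant{\Courant{r,x}_\frks,\Courant{r,x}_\frks}_\frks=0$ as well, so that the Maurer--Cartan equation for $r_t$ holds coefficient by coefficient. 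You instead transport the relative Rota--Baxter identity: once the homomorphism equations hold identically in $t$, then for every $t$ with $\phi_t={\Id_\g}+t\ad_x$ invertible one has $r_t^\sharp=\phi_t^{-1}\circ r^\sharp\circ\varphi_t^*$, a conjugate of a relative Rota--Baxter operator, hence a relative Rota--Baxter operator; polynomiality of the $S$-equation in $t$ then upgrades ``all but finitely many $t$'' to ``all $t$'' (over $\K$ this phrasing is preferable to ``small $t$''). This route costs you an extra (routine) lemma on transport of relative Rota--Baxter operators and the generic-$t$ step, but it buys something real: the graded Jacobi identity alone only yields $\Courant{\kappa,\kappa}_\frks=\Courant{r,\Courant{x,\kappa}_\frks}_\frks$ for $\kappa=\Courant{r,x}_\frks$, so the paper's one-line justification is incomplete without further use of the Nijenhuis conditions, whereas your argument derives the Maurer--Cartan property from the very identities \eqref{eq:Nij1}--\eqref{eq:Nij3} that give triviality.

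One caveat, which you inherit from the paper rather than introduce: both proofs need $\kappa=\Courant{r,x}_\frks=-({\Id_\g}\otimes\ad_x+L_x\otimes{\Id_\g})r$ to be symmetric (you, in order to apply Proposition \ref{pro:LSBi-H} to $r_t$; the paper, in order to say $\Courant{r,x}_\frks\in\Sym^2(\g)$). Writing $\tau$ for the flip of $\g\otimes\g$, a direct computation using only the symmetry of $r$ gives $\kappa-\tau(\kappa)=({\Id_\g}\otimes R_x)r-(R_x\otimes{\Id_\g})r$, so symmetry of $\kappa$ is equivalent to $(R_x\otimes{\Id_\g}-{\Id_\g}\otimes R_x)r=0$. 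The printed condition $(R_x\otimes 1+1\otimes R_x)r=0$ defining $\tilde{\huaC}_{\frks}^1(\g)$ instead forces $(R_x\otimes{\Id_\g})r$ to be skew-symmetric, which does not suffice. So your remark that the membership condition ``controls the antisymmetric part of $\kappa$'' is right in spirit, but the check only closes if that condition is read with a minus sign; this appears to be a typo in the paper, since the same symmetry is already needed for $(\tilde{C}_{\frks}^{*}(\g),\delta_{\frks})$ to be a subcomplex and for the paper's own proof to make sense. Flag it, but it does not distinguish your proof from the paper's.
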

\begin{proof}
Since $r\in  \Sym^2(\g)$ is a classical  $\frks$-matrix and $x\in \tilde{\huaC}_{\frks}^1(\g)$, by graded Jacobi identity, we have
$$\Courant{\Courant{r,x}_\frks,\Courant{r,x}_\frks}_\frks=0,$$
which implies that $\Courant{r,x}_\frks\in \Sym^2(\g)$ is also a classical  $\frks$-matrix.  Also, we have $\Courant{r,\Courant{r,x}_\frks}_\frks=0$.  Thus $r_t=r+t\Courant{r,x}_\frks$ is a one-parameter infinitesimal deformation of $r$.

By \eqref{eq:Nij1},  ${\Id_\g}+\ad_x:\g^c\rightarrow\g^c$ is a Lie algebra homomorphism.  By \eqref{eq:Nij2}, \eqref{eq:weak 1} follows.  By \eqref{eq:Nij3}, \eqref{eq:weak 2} follows.  Therefore, the one-parameter infinitesimal deformation $r_t=r+t\Courant{r,x}_\frks$ is trivial.
\end{proof}

 \end{document}